\newcommand{\norm}[1]{\left\lVert#1\right\rVert}
\definecolor{mygreen}{RGB}{28,172,0} % color values Red, Green, Blue
\definecolor{mylilas}{RGB}{170,55,241}
\definecolor{red}{rgb}{1,0,0}
\newtheorem{theorem}{Theorem}[section]
\newtheorem{corollary}{Corollary}
\newtheorem{lemma}{Lemma}
\newtheorem{remark}{Remark}
\newtheorem{definition}{Definition}[section]
\newtheorem{proposition}{Proposition} 
\title{Global Well-Posedness and Blow-Up for the fifth order $L^2-$critical KP-I equation}
\author{Francisc Bozgan \thanks{NYUAD Research Institute, New York University Abu Dhabi, Saadiyat Island, PO Box 129188, Abu Dhabi, United Arab Emirates, {\sf ftb208@nyu.edu}}}
\date{}
\begin{document}

\maketitle
\begin{abstract}
    In the current paper, we investigate the fifth order modified KP-I eqaution, namely 
    \begin{equation*}
     \partial_t u-\partial_{x}^{5}u-\partial_{x}^{-1}\partial_{y}u+\partial_{x}(u^3)=0.
    \end{equation*}
    This equation is $L^2$ critical and we prove on $\mathbb{R}\times\mathbb{R}$ that it is globally well posed in the natural energy space if the $L^2$ norm of the initial data is less the $L^2$ norm of the ground state associated to this equation. We also find a subspace of the natural energy space associated to this equation where we have local well-posedness, nevertheless if the initial data is sufficiently localized we obtain blow-up. On $\mathbb{R}\times \mathbb{T},$ we prove global well-posedness in the energy space for small data. 
\end{abstract}
\section{Introduction}
We start with the equation 
\begin{equation}\label{fifthKPI}
\begin{cases}
&u_t-\partial_{x}^{5}u-\partial_{x}^{-1}\partial_{y}^{2}u+\partial_{x}(u^{p+1})=0 \\
&u(0,x,y)=u_0(x,y)\in H^s(\mathbb{R}^2)
\end{cases}
\end{equation}
which is the fifth order generalized Kadomtsev-Petviashvili-I equation for $p\geq1$ and $p=\frac{m}{n}$ where $(m,n)=1,$ $n$ is an odd integer. For $p=2$, it appears as a model of sound waves in antiferromagnetic materials, see \cite{FalkovitchTuritsyn}. This is a modified version of an equation that is part of the hierarchy of KP equations, namely 
\begin{equation}\label{hierarchy}
    \partial_{t}+(-1)^{\frac{l+1}{2}}\partial_{x}^{l}u\pm \partial_{x}^{-1}\partial_{y}u+u\partial_{x}u=0,
\end{equation}
with the classical KP equations corresponding to $l=3$ and depending on the sign of the weak transverse effect term, we call the equation \eqref{hierarchy} KP-I equation if the sign is $-$ and KP-II equation if the sign is $+.$  Here the operator $\partial_x^{-1}$ is defined via Fourier transform, $\widehat{\partial_x^{-1}f}(\xi,\eta)=\frac{1}{i\xi}\widehat{f}(\xi,\eta).$ These equations are well-studied, especially the classical KP equations. It is a model for the propagation of weakly nonlinear dispersive long waves on the surface of a fluid, when the wave motion is essentially one-directional with weak transverse effects along the y-axis, see \cite{KadomtsevPetviashvili}, \cite{AbramyanStepanyats}. We refer to \cite{KleinSaut} to an extensive overview on these equations. More specifically, in \cite{LiXiao} it was proved that the KP-I equation \eqref{hierarchy} for $l=5$ is globally well-posed in $L^2(\mathbb{R}^2)$ (see also \cite{SautTzvetkov}).  

In terms of local well-posedness of \eqref{fifthKPI}, we state the best result about this which appears in \cite{Esfahani}: 
\begin{proposition}
    Let $u_0 \in X_s , s\geq 5$. Then there exists $T>0$ such that \eqref{fifthKPI} has a unique solution $u(t)$ with $u(0) = u_0$ satisfying 
$$u\in C([0,T);X_s )\cap C^{1}([0,T);H^{s-5}(\mathbb{R}^2)).$$ 
Moreover, we have the conservation laws: 
$$\|u(t)\|_{L^2}=\|u_0\|_{L^2} \mbox{  and  }E(u(t))=\frac{1}{2}\int_{\mathbb{R}^2}[(\partial_{x}^{2}u)^2+(\partial_{x}^{-1}\partial_{y}u)^2](t)dxdy-\frac{1}{p+2}\int_{\mathbb{R}^2}u^{p+2}(t)=E(u_0).$$
\end{proposition}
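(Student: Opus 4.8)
The plan is to establish this by the classical energy method: construct approximate solutions via a parabolic (artificial viscosity) regularization, derive a priori estimates uniform in the regularization parameter, and pass to the limit; the conservation laws will follow from the Hamiltonian structure. Two structural facts drive everything. First, the linear operator $L=\partial_x^5+\partial_x^{-1}\partial_y^2$ is skew-adjoint on $L^2(\mathbb{R}^2)$, so it generates a unitary group and contributes nothing to the time-derivative of any $L^2$-based norm. Second, the nonlinearity is an exact $x$-derivative, $\partial_x(u^{p+1})$, which is compatible with the anti-derivative structure encoded in $X_s$ (applying $\partial_x^{-1}$ returns the smooth quantity $u^{p+1}$). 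Hence the only genuine source of derivative loss is the nonlinear term, and the whole difficulty is reduced to controlling it.

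First I would regularize, for instance by solving
$$u^\epsilon_t = L u^\epsilon - \partial_x\big((u^\epsilon)^{p+1}\big) - \epsilon\,\partial_x^{6} u^\epsilon, \qquad u^\epsilon(0)=u_0,$$
(equivalently, mollifying the nonlinearity by Friedrichs mollifiers). For fixed $\epsilon>0$ the dissipative term smooths, the nonlinearity becomes locally Lipschitz on $X_s$, and a standard Picard iteration / ODE-in-a-Banach-space argument yields a unique local solution $u^\epsilon\in C([0,T_\epsilon);X_s)$. The heart of the proof is then the $\epsilon$-uniform a priori estimate. Applying the $X_s$ weight (morally $\partial_x^{s}$ together with the factor encoding $\partial_x^{-1}\partial_y$) to the equation and pairing with $u^\epsilon$, the skew-adjoint part $L$ drops out and the viscosity has a favorable sign, leaving
$$\tfrac{1}{2}\frac{d}{dt}\|u^\epsilon\|_{X_s}^2 \le \left|\left\langle \partial_x^{s}\partial_x\big((u^\epsilon)^{p+1}\big),\, \partial_x^{s}u^\epsilon\right\rangle\right| + (\text{lower order}).$$
The dangerous top-order piece $\int \partial_x^{s+1}u^\epsilon\,(u^\epsilon)^p\,\partial_x^{s}u^\epsilon$ is handled by integrating by parts in $x$ to move the extra derivative onto the coefficient, producing $\sim\int (\partial_x^s u^\epsilon)^2\,\partial_x\big((u^\epsilon)^p\big)$, bounded by $\|\partial_x(u^\epsilon)^p\|_{L^\infty}\|u^\epsilon\|_{X_s}^2$; Kato--Ponce/Moser estimates and Sobolev embedding (here $s\ge 5$ supplies the needed $W^{1,\infty}$ control) close this as $\lesssim \|u^\epsilon\|_{X_s}^{1+p}$, so Gronwall furnishes a time $T>0$ and a bound $\sup_{[0,T]}\|u^\epsilon\|_{X_s}\le C(\|u_0\|_{X_s})$ independent of $\epsilon$.

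With the uniform bound in hand I would pass to the limit $\epsilon\to 0$: weak-$*$ compactness in $L^\infty([0,T];X_s)$ together with Aubin--Lions compactness (using that $u^\epsilon_t$ is bounded in $H^{s-5}$) extracts a subsequence converging to a solution $u\in L^\infty([0,T];X_s)$ with $u_t\in L^\infty([0,T];H^{s-5})$. Uniqueness comes from the same energy method applied at the $L^2$ level to the difference of two solutions, where the nonlinear derivative is again absorbed by integration by parts at the cost only of the $W^{1,\infty}$ norm of the solutions, already controlled. To upgrade weak continuity to the stated $u\in C([0,T);X_s)\cap C^1([0,T);H^{s-5})$ I would run a Bona--Smith argument: approximate $u_0$ by smooth data, combine continuous dependence with the energy identity to show the map $t\mapsto\|u(t)\|_{X_s}$ is continuous rather than merely lower semicontinuous.

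Finally, the conservation laws reflect the structure. The equation is Hamiltonian, $u_t=\partial_x\,\frac{\delta E}{\delta u}$ with $\frac{\delta E}{\delta u}=\partial_x^4 u+\partial_x^{-2}\partial_y^2 u-u^{p+1}$, and since $\partial_x$ is skew-adjoint, $\frac{d}{dt}E(u)=\big\langle \frac{\delta E}{\delta u},\,\partial_x\frac{\delta E}{\delta u}\big\rangle=0$; testing the equation against $u$ gives $\frac{d}{dt}\|u\|_{L^2}^2=0$ because every surviving term is a perfect $x$-derivative or skew. At regularity $s\ge 5$ these pairings are directly justified, and the endpoint case is covered by the density/approximation already used in the Bona--Smith step. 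The step I expect to be the main obstacle is precisely the uniform energy estimate: taming the derivative loss in $\partial_x(u^{p+1})$ while simultaneously propagating the anisotropic $\partial_x^{-1}\partial_y$ component of the $X_s$ norm, compounded by the fact that for non-integer $p=m/n$ the composition $u\mapsto u^{p+1}$ must be handled by fractional Moser estimates rather than the plain Leibniz rule, which is the one genuinely delicate technical point.
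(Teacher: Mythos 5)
The paper does not actually prove this proposition---it is quoted verbatim from Esfahani's work---and the method your proposal outlines (parabolic regularization, $\epsilon$-uniform energy estimates closing via integration by parts on the top-order term $\int u^p\,\partial_x^{s+1}u\,\partial_x^s u$, compactness, a Bona--Smith argument for continuity, and conservation laws from the Hamiltonian structure $u_t=\partial_x\frac{\delta E}{\delta u}$) is precisely the standard route the paper itself invokes for its analogous local results, citing Iorio--Nunes for the regularization and Bona--Smith for the flow map, so your approach matches the intended proof. The one slip to fix: your regularizing term $-\epsilon\,\partial_x^6 u$ has the wrong sign---since $(i\xi)^6=-\xi^6$ it is \emph{anti}-dissipative---so you should take $+\epsilon\,\partial_x^6 u$ or, better, an isotropic regularization such as $\epsilon\Delta u$ (as in the Iorio--Nunes scheme), which also provides the smoothing in $y$ needed to solve the regularized problem in the anisotropic space $X_s$.
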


We have both scaling and translation invariance for this equation, in particular we have that $u_{\lambda}(t,x,y)=\lambda^{\frac{4}{p}}u(\lambda^5t,\lambda x, \lambda^3 y)$ solves equation \eqref{fifthKPI}, hence 
$$\|u_{\lambda}\|_{\dot{H}^{s_1,s_2}}=\lambda^{\frac{4}{p}+s_1+3s_2-2}\|u\|_{\dot{H}^{s_1,s_2}},$$
and we notice that for $\|u_{\lambda}\|_{L^2}=\lambda^{\frac{4}{p}-2}\|u_0\|_{L^2}$ and $E(u_{\lambda})=\lambda^{\frac{8}{p}}E(u)=\lambda^{\frac{8}{p}}E_0.$ Thus, for $p=2$ we observe that it is the $L^2$-critical case for the equation \eqref{fifthKPI}.

In this paper, we improve this local well-posedness result for $p=2.$ We have the following theorem: 
\begin{theorem}\label{mainthm}
    We define $E^s=\{u\in \mathcal{S}':\|(1+D_x)^su\|_{L^2}+\|\partial_{x}^{-1}\partial_{y}u\|_{L^2}<\infty\}. $. Then the equation \eqref{fifthKPI} with initial data $u_0\in E^s$ is locally well-posed for $s>\frac{3}{2}.$ 
\end{theorem}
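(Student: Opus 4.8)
The plan is to recast \eqref{fifthKPI} (with $p=2$) in its Duhamel form and to solve it by a contraction argument in a function space tailored to the smoothing of the linear flow. The linear propagator is $S(t)=e^{t(\partial_x^5+\partial_x^{-1}\partial_y^2)}$, with Fourier multiplier $e^{it(\xi^5+\eta^2/\xi)}$, so that a solution on a time interval $[0,T]$ is a fixed point of
\begin{equation*}
\Phi(u)(t)=S(t)u_0-\int_0^t S(t-t')\,\partial_x\big(u^3\big)(t')\,dt'.
\end{equation*}
Since the nonlinearity $\partial_x(u^3)=3u^2\partial_x u$ carries one $x$-derivative, the crux of the argument is to recover this derivative loss, which I would do by exploiting the local (Kato) smoothing effect of the fifth-order dispersion.

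First I would establish the relevant linear estimates for $S(t)$. The key one is the local smoothing estimate gaining two $x$-derivatives,
\begin{equation*}
\big\|\partial_x^{2}S(t)\phi\big\|_{L^\infty_x L^2_{yt}}\lesssim \|\phi\|_{L^2_{xy}},
\end{equation*}
which follows from the fifth-order behavior $\partial_\xi(\xi^5+\eta^2/\xi)\sim \xi^4$ of the phase away from the origin, together with its retarded (Duhamel) counterpart obtained by duality. Alongside this I would prove a maximal-function estimate of the form $\|S(t)\phi\|_{L^2_{xy}L^\infty_T}\lesssim (1+T)^\theta\|\phi\|_{H^{s}}$ and the energy identity $\|S(t)\phi\|_{E^s}=\|\phi\|_{E^s}$ (immediate since $S(t)$ commutes with the defining multipliers of $E^s$ and has modulus-one symbol). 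The analysis of the singular region $\xi\approx 0$, where the multiplier $\eta^2/\xi$ degenerates, is handled by observing that there the transverse frequency supplies dispersion and that the $\partial_x$ in the nonlinearity produces a compensating factor of $\xi$.

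With these estimates in hand, I would set up the resolution space $X_T$ with norm combining $\|u\|_{L^\infty_T E^s}$, the smoothing norm $\|\partial_x^{s+2}u\|_{L^\infty_x L^2_{yt}}$, and the maximal-function norm $\|u\|_{L^2_{xy}L^\infty_T}$, and verify that $\Phi$ is a contraction on a ball of $X_T$ for $T$ small. Applying $\langle D_x\rangle^s$ to the Duhamel term and distributing derivatives in $\partial_x(u^3)$, the worst case places all $s+1$ derivatives on a single factor; this factor is controlled in the smoothing norm, where two derivatives are available, leaving a margin of one derivative, while the two remaining factors $u$ are placed in $L^\infty$ through the maximal-function estimate and the one-dimensional Sobolev embedding $H^s_x\hookrightarrow L^\infty_x$, valid for $s>\tfrac12$. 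The component $\partial_x^{-1}\partial_y u$ of the $E^s$ norm is treated similarly, the $\partial_x$ in the nonlinearity cancelling the $\partial_x^{-1}$ so that no singularity at zero $x$-frequency appears.

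The main obstacle I expect is the simultaneous management of the derivative loss and the anisotropic, $\xi=0$-singular structure: the smoothing gain must be balanced precisely against the derivative in the nonlinearity and the transverse weight $\partial_x^{-1}\partial_y$, and it is the maximal-function/embedding step for the low-order factors that forces the threshold $s>\tfrac32$. Closing the resulting multilinear estimate in the combined norm, while keeping all constants uniform in the small parameter $T$ so that both the contraction and the difference estimate go through, is the delicate technical heart of the proof.
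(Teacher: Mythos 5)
Your proposal takes a Kenig--Ponce--Vega style route (contraction mapping in a space built on local smoothing and maximal-function norms), which is genuinely different from the paper's energy method, but it has a gap at its foundation: the key linear estimate $\|\partial_x^2 S(t)\phi\|_{L^\infty_x L^2_{yt}}\lesssim\|\phi\|_{L^2_{xy}}$ is not justified, because the heuristic you offer for it --- that the $\xi$-derivative of the phase behaves like $\xi^4$ away from the origin --- is false. One has $\partial_\xi\big(\xi^5+\eta^2/\xi\big)=5\xi^4-\eta^2/\xi^2$, which vanishes identically on the large set $\eta^2=5\xi^6$: for every $\xi$, however large, there are transverse frequencies at which the $x$-group velocity is zero. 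This cancellation is exactly the KP-I sign structure (for the KP-II sign one would have $5\xi^4+\eta^2/\xi^2\geq 5\xi^4$ and the standard change-of-variables proof of gain-two smoothing would go through), and it is also the mechanism behind the known failure of regularity of the flow map for KP-I type equations (Molinet--Saut--Tzvetkov), which rules out Picard iteration in standard Sobolev-type spaces absent a fundamentally new ingredient. The maximal-function estimate $\|S(t)\phi\|_{L^2_{xy}L^\infty_T}\lesssim\|\phi\|_{H^s}$ is likewise only asserted, and is delicate for anisotropic KP flows, especially given that $E^s$ carries only the very weak transverse information $\partial_x^{-1}\partial_y u\in L^2$; your trilinear estimate would have to close with no positive $y$-regularity to spare. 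Your claim that the maximal-function step ``forces'' $s>\tfrac32$ is a post-hoc rationalization rather than a derivation.

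The paper avoids iteration entirely, which is the point of its design: existence comes from parabolic regularization as in Iorio--Nunes, uniqueness and continuity of the flow from the Bona--Smith argument, and the whole scheme reduces (Proposition \ref{mainproposition}) to two a priori bounds --- the $E^s$ energy estimate of Lemma \ref{energyestimates}, which is a Gr\"onwall inequality driven by $\|u\|_{L^\infty_{xy}}+\|\partial_x u\|_{L^\infty_{xy}}$, and control of $\|u\|_{L^2_TL^\infty_{xy}}+\|\partial_x u\|_{L^2_TL^\infty_{xy}}$ obtained from the Strichartz estimates of Lemma \ref{StrichartzEstimates} combined with Littlewood--Paley decompositions and Kato--Ponce commutator estimates (Lemma \ref{mainbound}). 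There the threshold $s>\tfrac32$ arises concretely as $\min_{\theta\in(0,\frac58)}\max\big\{\tfrac{3-2\theta}{1+\theta},\tfrac{3-3\theta}{2\theta},\tfrac{2-\theta}{2-2\theta}\big\}=\tfrac32$, attained at $\theta=\tfrac12$. If you want to salvage a fixed-point scheme for this equation, the natural framework would be Bourgain-type spaces with a trilinear estimate adapted to the fifth-order KP-I resonance function; such an estimate is not available for the cubic nonlinearity, and even the positive iteration results for the quadratic fifth-order KP-I (Saut--Tzvetkov) exploit bilinear structure that does not transfer to the present problem.
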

In order to prove this, we employ the energy method that was used to prove local well-posedness result for the modified third order KP-I that appeared in \cite{KenigZiesler} and \cite{KenigZiesler2}. 

By a solitary-wave solution of \eqref{fifthKPI}, we mean a traveling-wave solution of \eqref{fifthKPI} in the form $u(t, x, y) = \varphi(x-ct, y)$ with $u \rightarrow 0$ as
$x^2 + y^2 \rightarrow \infty$. And $\varphi_c$ is a solution of the equation
\begin{equation}\label{groundstate}
    \begin{cases}
    -c\varphi_c-\partial_{x}^{4}\varphi_c-\partial_{x}^{-2}\partial_{y}^{2}\varphi_c+\varphi_{c}^{p+1}=0\\
    \varphi_c \in E^2, \varphi_c\neq 0.
\end{cases}
\end{equation}

The existence of solitary waves for equation \eqref{fifthKPI} was proved in \cite{deBouardSaut1} and \cite{EsfahaniLevandovsky}. 
We define $L_c(u)=E(u)+\frac{c}{2}\|u\|_{L^2}.$ We define the ground state as follows. 

\begin{definition}\label{defgroundstate}
  Let $\Gamma_c$ be the set of the solutions of \eqref{groundstate}; namely,
$\Gamma_c =\{\phi \in E^2 |L'_c(\phi)=0,\phi\neq 0\}$, and let $G_c$ be the set of the ground states of \eqref{groundstate}; that is
$$G_c =\{\varphi \in \Gamma_c |L_c(\varphi)\leq L_c(\phi),\forall \phi\in \Gamma_c\}.$$ 
We also define $d_c=\inf_{\varphi \in G_c}L_{c}(\varphi).$
\end{definition}
From (\cite{EsfahaniLevandovsky}, Theorem 2.2) we know that $G_c$ is not empty. We know many properties of the ground state of \eqref{groundstate}, for example, $\varphi \in G_1$ is orbitally unstable for $p>2$ from \cite{EsfahaniLevandovsky}. Nevertheless, we do not know the uniqueness of the ground states of equation \eqref{fifthKPI}.  Next, we state a lemma in order to show that $d_c$ does not depend specifically on the ground state. 
\begin{lemma}\label{pohozaev}
    Let $\varphi\in G_1.$ Then we have the following equalities: 
    $$\int_{\mathbb{R}^2}(\partial_{x}^{2}\varphi)^2dxdy=\int_{\mathbb{R}^2}(\partial_{x}^{-1}\partial_{y}\varphi)^2dxdy=\frac{p}{4}\int_{\mathbb{R}^2}\varphi^2=\frac{p}{2(p+2)}\int_{\mathbb{R}^2}\varphi^{p+2}.$$
\end{lemma}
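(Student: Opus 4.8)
The plan is to combine the Nehari identity (obtained by pairing the equation with $\varphi$) with two anisotropic Pohozaev/virial identities coming from the \emph{independent} rescalings of the $x$ and $y$ variables, and then to solve the resulting $3\times 3$ linear system for the ratios. Throughout I set $c=1$ since $\varphi\in G_1$, and abbreviate $A=\int_{\mathbb{R}^2}(\partial_x^2\varphi)^2$, $B=\int_{\mathbb{R}^2}(\partial_x^{-1}\partial_y\varphi)^2$, $M=\int_{\mathbb{R}^2}\varphi^2$, and $N=\int_{\mathbb{R}^2}\varphi^{p+2}$, so that $L_1(\varphi)=\tfrac12 A+\tfrac12 B+\tfrac12 M-\tfrac1{p+2}N$ and, after multiplying \eqref{groundstate} by $-1$, the ground-state equation reads $L_1'(\varphi)=\varphi+\partial_x^4\varphi+\partial_x^{-2}\partial_y^2\varphi-\varphi^{p+1}=0$.

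First I would pair the equation with $\varphi$ itself. Using Plancherel for the two quadratic terms (the operator $\partial_x^4$ contributes $\int(\partial_x^2\varphi)^2=A$ and the nonlocal operator $\partial_x^{-2}\partial_y^2$, of Fourier multiplier $\eta^2/\xi^2$, contributes $\int(\partial_x^{-1}\partial_y\varphi)^2=B$), the pairing $\langle L_1'(\varphi),\varphi\rangle=0$ yields the Nehari relation
\begin{equation}\label{nehari}
A+B+M-N=0.
\end{equation}
Next come the two Pohozaev identities. I introduce the two-parameter family $\varphi_{\lambda,\mu}(x,y)=\varphi(\lambda x,\mu y)$ and compute the scaling of each term by a change of variables (performed in Fourier for the nonlocal term): one finds $A(\lambda,\mu)=\lambda^{3}\mu^{-1}A$, $B(\lambda,\mu)=\lambda^{-3}\mu\,B$, while $M(\lambda,\mu)=\lambda^{-1}\mu^{-1}M$ and $N(\lambda,\mu)=\lambda^{-1}\mu^{-1}N$. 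Since $\varphi$ is a critical point of $L_1$, the chain rule gives $\partial_\lambda L_1(\varphi_{\lambda,\mu})\big|_{(1,1)}=\langle L_1'(\varphi),x\partial_x\varphi\rangle=0$ and $\partial_\mu L_1(\varphi_{\lambda,\mu})\big|_{(1,1)}=\langle L_1'(\varphi),y\partial_y\varphi\rangle=0$. Differentiating the explicit expression $L_1(\varphi_{\lambda,\mu})=\tfrac12\lambda^3\mu^{-1}A+\tfrac12\lambda^{-3}\mu B+\tfrac12\lambda^{-1}\mu^{-1}M-\tfrac1{p+2}\lambda^{-1}\mu^{-1}N$ at $(\lambda,\mu)=(1,1)$ and clearing denominators yields the pair
\begin{equation}\label{pohozaevpair}
3A-3B-M+\tfrac{2}{p+2}N=0, \qquad -A+B-M+\tfrac{2}{p+2}N=0.
\end{equation}

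Finally I solve the system. Subtracting the two identities in \eqref{pohozaevpair} gives $4A-4B=0$, i.e. $A=B$. Substituting $A=B$ into the second identity of \eqref{pohozaevpair} gives $M=\tfrac{2}{p+2}N$, and then \eqref{nehari} gives $2A=N-M=\tfrac{p}{2}M$, hence $A=\tfrac{p}{4}M$. Assembling these, $A=B=\tfrac{p}{4}M=\tfrac{p}{2(p+2)}N$, which is exactly the claimed chain of equalities. (Equivalently, one may bypass differentiating the functional by multiplying \eqref{groundstate} directly by the dilation generators $x\partial_x\varphi$ and $y\partial_y\varphi$ and integrating by parts; this produces the same two relations \eqref{pohozaevpair}.)

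The algebra is routine; the step I expect to be the genuine obstacle is the rigorous justification of \eqref{pohozaevpair}. Making $\langle L_1'(\varphi),x\partial_x\varphi\rangle$ and $\langle L_1'(\varphi),y\partial_y\varphi\rangle$ meaningful, and carrying out the associated integrations by parts with no boundary contributions, requires more than the membership $\varphi\in E^2$ that defines $\Gamma_1$: one needs sufficient regularity and spatial decay of the ground state, together with control of the nonlocal factor $\partial_x^{-1}\partial_y\varphi$ under anisotropic scaling, so that differentiation under the integral sign and Plancherel are legitimate. I would supply this by invoking the smoothing and decay properties of solitary waves of \eqref{fifthKPI} established in \cite{deBouardSaut1,EsfahaniLevandovsky} (elliptic regularity for the symbol $1+\xi^4+\eta^2/\xi^2$), which guarantee that $A,B,M,N$ are finite and that the truncation/limiting argument closing the integrations by parts converges.
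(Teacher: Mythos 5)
Your proof is correct, and it is essentially the paper's own approach: the paper gives no details beyond remarking that the lemma ``is a consequence of Pohozaev identities,'' and your combination of the Nehari identity $\langle L_1'(\varphi),\varphi\rangle=0$ with the two anisotropic scaling identities (equivalently, pairing the equation with $x\partial_x\varphi$ and $y\partial_y\varphi$), followed by solving the resulting linear system, is precisely the standard derivation being invoked, with all signs and exponents checking out. Your concluding caveat--that the integrations by parts and differentiation of $\lambda\mapsto L_1(\varphi_{\lambda,\mu})$ require the regularity and decay of solitary waves from \cite{deBouardSaut1} and \cite{EsfahaniLevandovsky}--correctly identifies where the only real technical work lies, and matches how the paper implicitly discharges it.
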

 This lemma is a consequence of Pohozaev identities. Hence, if $\varphi_1 \in G_1 $, we have that $L_1(\varphi_1)=\frac{p}{4}\|\varphi_1\|_{L^2}^{2}=d_1$ is independent of the choice $\varphi_1.$ We can state the corollary of the above theorem: 
\begin{corollary}\label{globalWP}
    If $u_0\in E^2$ such that $\|u_0\|_{L^2}<\|\varphi_1\|_{L^2},$ where $\varphi_1\in G_1,$ then we have that the solution of \eqref{fifthKPI} for $p=2$ with initial data $u_0$ is globally well-posed in $E^2.$ 
\end{corollary}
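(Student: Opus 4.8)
The plan is to upgrade the local solution furnished by Theorem \ref{mainthm} to a global one by proving an a priori bound, uniform in time, on the $E^2$ norm of the solution, and then invoking the continuation criterion of the local theory. Since $s=2>\tfrac{3}{2}$, Theorem \ref{mainthm} produces a unique solution on a maximal interval $[0,T^*)$ whose existence time depends only on $\|u_0\|_{E^2}$; consequently, if one can show $\sup_{t\in[0,T^*)}\|u(t)\|_{E^2}<\infty$, then necessarily $T^*=\infty$ (and likewise for negative times). Because $(1+|\xi|)^4\simeq 1+\xi^4$ as Fourier multipliers, one has $\|u\|_{E^2}^2\simeq \|u\|_{L^2}^2+\|\partial_x^2 u\|_{L^2}^2+\|\partial_x^{-1}\partial_y u\|_{L^2}^2$, and since $\|u(t)\|_{L^2}=\|u_0\|_{L^2}$ is conserved, everything reduces to controlling the kinetic quantity $K(t)+M(t)$, where $K(u)=\int(\partial_x^2u)^2$ and $M(u)=\int(\partial_x^{-1}\partial_y u)^2$, uniformly in $t$.

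The engine of the argument is the sharp anisotropic Gagliardo--Nirenberg inequality adapted to this equation,
\begin{equation*}
\int_{\mathbb{R}^2}u^4\,dx\,dy\;\le\;C_{GN}\,\Big(\int(\partial_x^2u)^2\Big)^{1/2}\Big(\int(\partial_x^{-1}\partial_y u)^2\Big)^{1/2}\Big(\int u^2\Big).
\end{equation*}
The exponents are forced: testing the three-parameter anisotropic scaling $u\mapsto A\,u(\mu x,\nu y)$, which leaves the ratio $K^{1/2}M^{1/2}N/P$ invariant with $N=\int u^2$ and $P=\int u^4$, pins down the powers $\tfrac12,\tfrac12,1$ uniquely. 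I would obtain the inequality, with its best constant attained, from the variational problem $C_{GN}^{-1}=\inf_u K(u)^{1/2}M(u)^{1/2}N(u)/P(u)$; its Euler--Lagrange equation is, after an amplitude and anisotropic rescaling, exactly the solitary-wave equation \eqref{groundstate} with $p=2$, so any minimizer is a rescaled ground state, i.e.\ lies in the orbit of $G_1$. The best constant is then computed purely from Lemma \ref{pohozaev}: for $\varphi_1\in G_1$ with $p=2$ one has $K(\varphi_1)=M(\varphi_1)=\tfrac12\|\varphi_1\|_{L^2}^2$ and $\int\varphi_1^4=2\|\varphi_1\|_{L^2}^2$, whence $C_{GN}=4/\|\varphi_1\|_{L^2}^2$. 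Crucially, $L^2$-criticality makes the scaling mass-invariant, so every element of the optimizing orbit shares the value $\|\varphi_1\|_{L^2}$, and $C_{GN}$ is a genuine constant.

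With the sharp inequality in hand, the coercivity of the energy below the ground-state mass is immediate. Writing $N(u)=\|u\|_{L^2}^2$ and using $K^{1/2}M^{1/2}\le\tfrac12(K+M)$, the energy obeys
\begin{equation*}
E(u)=\tfrac12\big(K+M\big)-\tfrac14\int u^4\;\ge\;\tfrac12\big(K+M\big)\Big(1-\frac{N(u)}{\|\varphi_1\|_{L^2}^2}\Big).
\end{equation*}
By mass conservation $N(u(t))=\|u_0\|_{L^2}^2$, and the hypothesis $\|u_0\|_{L^2}<\|\varphi_1\|_{L^2}$ makes $\delta:=1-\|u_0\|_{L^2}^2/\|\varphi_1\|_{L^2}^2>0$. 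Energy conservation then yields, for all $t$,
\begin{equation*}
K(t)+M(t)\;\le\;\frac{2E(u(t))}{\delta}=\frac{2E(u_0)}{\delta},
\end{equation*}
a bound independent of $t$. Combined with mass conservation this gives $\sup_t\|u(t)\|_{E^2}<\infty$, and the continuation criterion forces $T^*=\infty$, proving global well-posedness in $E^2$.

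The main obstacle is the second paragraph: establishing that the best constant in the anisotropic Gagliardo--Nirenberg inequality is attained and that the extremizer is precisely a (rescaled) ground state. The difficulty is that the functional is anisotropic and the term $M$ is nonlocal, so existence of a minimizer is not automatic and would have to be obtained by a concentration--compactness argument respecting the three-parameter scaling (or imported from the existence theory in \cite{deBouardSaut1}, \cite{EsfahaniLevandovsky}); one must also match the Euler--Lagrange equation to \eqref{groundstate} and ensure the infimum is nondegenerate. Once the extremizer is identified with the $G_1$ orbit, the non-uniqueness of ground states is harmless, since Lemma \ref{pohozaev} shows that all of them yield the same mass and hence the same constant. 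The remaining steps---coercivity, the a priori bound, and continuation---are routine.
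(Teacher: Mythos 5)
Your proposal is correct and follows essentially the same route as the paper: the sharp anisotropic Sobolev inequality \eqref{anisotropic} with constant $4\|\varphi_1\|_{L^2}^{-2}$ (verified via Lemma \ref{pohozaev}), AM-GM absorption of $K^{1/2}M^{1/2}$, mass and energy conservation to get a time-uniform $E^2$ bound, and the continuation criterion from Theorem \ref{mainthm}. The variational derivation of the sharp constant that you flag as the main obstacle is unnecessary here, since the paper simply cites the sharp inequality from \cite{EsfahaniLevandovsky} (proved as in \cite{ChenFengLiu}), exactly as your fallback suggests.
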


We observe that this resembles the global well-posedness result in $H^1(\mathbb{R})$ for the $L^2$ critical case of the generalized KdV equation, see \cite{Weinstein}, and the global well-posedness result in $H^1(\mathbb{R}^2)$ for the $L^2$ critical case of the generalized Zakharov-Kuznetsov equation, see \cite{LinaresPastor}. 

We also remind the blow-up result that we know about equation \eqref{fifthKPI}. Using the methods developed to prove blow-up and strong instability of ground states for the third order KP-I equation by \cite{Liu1}, \cite{Liu2}, it is proven in \cite{EsfahaniLevandovsky} the following blow-up result: 
\begin{proposition}
    Let $Q(u)=\|\partial_{x}^{-1}\partial_{y}u\|_{L^2(\mathbb{R}^2)}^{2}-\frac{p}{2(p+2)}\int_{\mathbb{R}^2}u^{p+2}$ and denote $Q^{-}=\{u \in X_s: L_1(u)<0, Q(u)<0\}.$ If $u_0 \in X_s, |y|u_0 \in L^2(\mathbb{R}^2), u_0\in Q^{-},$ then the solution $u(t)$ of the equation \eqref{fifthKPI} with $p\geq 4$ with initial data $u_0$ blows-up in finite time, namely there exists $T_0<\infty$ such that 
    $$\lim_{t\rightarrow T_{0}^{-}}\|\partial_{y}u(t)\|_{L^2(\mathbb{R}^2)}=\infty.$$
\end{proposition}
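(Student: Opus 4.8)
The plan is to run a virial (concavity) argument in the transverse variable $y$, built on the localized second moment
$$V(t)=\int_{\mathbb{R}^2}y^2u^2(t,x,y)\,dx\,dy.$$
The hypothesis $|y|u_0\in L^2$ ensures $V(0)<\infty$, and the heart of the rigorous argument is to propagate this weight, i.e. to show $V(t)<\infty$ on the existence interval and to justify the integrations by parts below. I expect to do this by replacing $y^2$ with a bounded cutoff weight $\psi_R(y)$, computing with $V_R(t)=\int\psi_R u^2$, and letting $R\to\infty$. This truncation step, together with the propagation of the weighted norm, is the first technical obstacle.

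Assuming the computation is justified, I would differentiate once. Using the equation, the contributions of $\partial_x^{5}u$ and of $\partial_x(u^{p+1})$ to $V'$ vanish (the former because $\int u\,\partial_x^{5}u\,dx=0$ and $y^2$ is $x$-independent, the latter because it is a perfect $x$-derivative), leaving only the transverse term, which after integrating by parts in $y$ gives
$$V'(t)=-4\int_{\mathbb{R}^2}y\,u\,\partial_x^{-1}\partial_yu\,dx\,dy.$$
Differentiating again and substituting $u_t$ from the equation, the fifth-order dispersive terms cancel exactly, and careful integration by parts (writing $v=\partial_x^{-1}u$ to handle the nonlocal factors) collapses the transverse and nonlinear contributions to the clean identity
$$V''(t)=8\,Q(u(t))=8\norm{\partial_x^{-1}\partial_yu(t)}_{L^2}^2-\frac{4p}{p+2}\int_{\mathbb{R}^2}u^{p+2}(t)\,dx\,dy.$$

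Next I would convert $V''$ into a strictly negative constant. A direct algebraic comparison gives, for every admissible $u$,
$$2E(u)-Q(u)=\norm{\partial_x^{2}u}_{L^2}^2+\frac{p-4}{2(p+2)}\int_{\mathbb{R}^2}u^{p+2}\,dx\,dy,$$
whose right-hand side is nonnegative for $p\geq4$; hence $Q(u)\leq2E(u)$. Since $E$ is conserved and $u_0\in Q^{-}$ forces $E(u_0)=L_1(u_0)-\tfrac12\norm{u_0}_{L^2}^2<0$, we obtain $V''(t)=8Q(u(t))\leq16E(u_0)=:-c_0<0$ for all $t$ in the existence interval. Thus $V$ is nonnegative and concave with $V''\leq-c_0$, so the bound $V(t)\leq V(0)+V'(0)t-\tfrac{c_0}{2}t^2$ shows that $V$ would become negative past a finite time; therefore the maximal existence time $T_0$ is finite.

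Finally I would identify the diverging quantity. Integrating by parts in $y$ yields the uncertainty-type inequality $\norm{u_0}_{L^2}^2=\norm{u(t)}_{L^2}^2\leq2\norm{yu(t)}_{L^2}\norm{\partial_yu(t)}_{L^2}$, that is
$$\norm{\partial_yu(t)}_{L^2}\geq\frac{\norm{u_0}_{L^2}^2}{2\sqrt{V(t)}}.$$
Combined with the continuation criterion from the local theory---the solution persists as long as $\norm{\partial_yu(t)}_{L^2}$ stays bounded, given the conserved mass and energy---the finiteness of $T_0$ forces $\lim_{t\to T_0^-}\norm{\partial_yu(t)}_{L^2}=\infty$, which is the assertion. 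Besides the weight-propagation/truncation issue flagged above, the second delicate point is exactly this last step: justifying that it is precisely $\norm{\partial_yu}_{L^2}$ (rather than some higher $X_s$-norm) that must blow up, which rests on the blow-up alternative supplied by the local well-posedness theory together with the conservation laws.
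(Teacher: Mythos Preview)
The paper does not give its own proof of this proposition; it is quoted as a known result from \cite{EsfahaniLevandovsky} (see the sentence immediately preceding the statement). Your virial/concavity argument is the standard one and is correct: the identity $V''(t)=8Q(u(t))$ is precisely the paper's Lemma~\ref{virial}, your algebraic comparison $2E(u)-Q(u)=\|\partial_x^2u\|_{L^2}^2+\tfrac{p-4}{2(p+2)}\int u^{p+2}\ge0$ for $p\ge4$ combined with $L_1(u_0)<0\Rightarrow E(u_0)<0$ gives the uniform negative bound on $V''$, and the Weyl--Heisenberg step at the end is exactly what the paper invokes in its own Lemma~\ref{blowup}. Note that your argument does not actually use the hypothesis $Q(u_0)<0$; it is implied by $L_1(u_0)<0$ once $p\ge4$.

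It is worth contrasting this with the paper's own blow-up contribution (Lemma~\ref{blowup}), which extends the conclusion to all $p>0$. There the direct inequality $Q\le 2E$---which is what requires $p\ge4$---is \emph{replaced} by variational machinery: the paper introduces the invariant set $\mathcal{J}=\{L_1<d_1,\ I<0,\ K>0\}$, uses a rescaling $u_\mu$ with $I(u_\mu)=0$ together with the minimization characterization of $d_1$ to obtain $I(u(t))\le L_1(u_0)-d_1<0$, and then observes $\tfrac18 V''=-K+I<I$. So your route is the elementary pointwise comparison that underlies the cited result, while the paper trades that inequality for invariance/minimization arguments in order to drop the restriction on $p$.
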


    We will improve this result for $p\geq 2,$ using some different invariant spaces than in \cite{EsfahaniLevandovsky}. By employing this generalization, if we restrict the initial data to a smaller space than $E^2$ but larger than $X_s$, we will obtain blow-up of a solution for the equation \eqref{fifthKPI} for $p=2$ for any initial data, despite being locally well-posed in the said subspace. We bring together all these remarks in the following theorem. 
\begin{theorem}
    There exists a subspace $X_s \subset \tilde{E}^2 \subset E^2$ such that if $u_0 \in \tilde{E}^2,$ then there exists $T=T(\|u_0\|_{L^2}, \|u_0\|_{\tilde{E}^2})$ such that a solution $u(t)$ of \eqref{fifthKPI} for $p=2$, exists in $C([0,T), \tilde{E}^2)$ and 
    $$\lim_{t\rightarrow T^{-}}\|u_0\|_{\tilde{E}^2}=+\infty.$$
\end{theorem}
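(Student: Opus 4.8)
The plan is to split the statement into a local well-posedness part and a virial (blow-up) part, the bridge between them being the quantity $I(t)=\int_{\mathbb{R}^2}y^2u^2\,dx\,dy$. I would take $\tilde E^2=\{u\in E^2:\ yu\in L^2(\mathbb{R}^2)\}$ with $\|u\|_{\tilde E^2}^2=\|u\|_{E^2}^2+\|yu\|_{L^2}^2$; since the anisotropic space $X_s$ carries decay in $y$ one has $X_s\subset\tilde E^2\subset E^2$, as required. Local well-posedness in $\tilde E^2$ I would obtain by running the energy method of Theorem \ref{mainthm} (applicable in $E^2$ because $2>\tfrac32$) and adjoining one estimate for the weight. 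The key observation is that, writing $z=\partial_x^{-1}\partial_y u$, the weight evolves by the clean law $\frac{d}{dt}\|yu\|_{L^2}^2=-4\int_{\mathbb{R}^2}y\,u\,z\,dx\,dy$, obtained by pairing \eqref{fifthKPI} with $y^2u$ and integrating by parts: the $\partial_x^{5}u$ and $\partial_x(u^{p+1})$ terms drop because the weight $y^2$ is $x$-independent, and the surviving anti-derivative term reduces to the stated expression after one integration by parts in $y$ (the leftover $\int y^2(\partial_y u)z$ vanishes since $\partial_x z=\partial_y u$). Because $\big|\int_{\mathbb{R}^2}y\,u\,z\,dx\,dy\big|\le\|yu\|_{L^2}\,\|u\|_{E^2}$, a Gronwall argument propagates the weight on the whole $E^2$-existence interval, producing a time $T=T(\|u_0\|_{L^2},\|u_0\|_{\tilde E^2})$ and a blow-up alternative: the solution extends in $\tilde E^2$ as long as $\|u(t)\|_{\tilde E^2}$ stays bounded.

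Next I would establish the virial identity. Differentiating $I'(t)=-4\int_{\mathbb{R}^2}y\,u\,z\,dx\,dy$ once more and substituting \eqref{fifthKPI}, the odd-order dispersive term and the nonlinearity recombine after several integrations by parts (using $\partial_x z=\partial_y u$ and the self-adjointness of $\partial_x^{-1}\partial_y$) into
\[
I''(t)=8\,\|\partial_x^{-1}\partial_y u(t)\|_{L^2}^2-\frac{4p}{p+2}\int_{\mathbb{R}^2}u^{p+2}(t)\,dx\,dy=8\,Q(u(t)),
\]
exactly the functional $Q$ introduced above. All of these manipulations are only formal at the regularity of $\tilde E^2$, so I would first carry them out for data in $X_s$, where \eqref{fifthKPI} has smooth, spatially decaying solutions and every integration by parts is legitimate, and then pass to general $u_0\in\tilde E^2$ by density of $X_s$ in $\tilde E^2$ together with the continuous dependence supplied by the first part.

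The heart of the argument is to turn the identity $I''=8Q$ into a strict, time-uniform negativity $Q(u(t))\le-\delta<0$. I would fix $p=2$ and work inside the region cut out by the conserved functional $L_1(u)=E(u)+\tfrac12\|u\|_{L^2}^2$: if $u_0\in Q^-$, i.e. $L_1(u_0)<d_1$ and $Q(u_0)<0$, then $Q^-$ should be invariant under the flow. This follows from the constrained-minimization characterization $d_1=\inf\{L_1(\phi):\phi\neq0,\ Q(\phi)=0\}$, consistent with Lemma \ref{pohozaev} and Definition \ref{defgroundstate} (the ground state satisfying $Q(\varphi_1)=0$ and $L_1(\varphi_1)=\tfrac{p}{4}\|\varphi_1\|_{L^2}^2=d_1$): were $Q(u(t_0))=0$ at a first time $t_0$, then $L_1(u(t_0))\ge d_1$, contradicting $L_1(u(t_0))=L_1(u_0)<d_1$. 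A scaling computation of $\frac{d}{d\lambda}L_1(u^\lambda)$ along the dilation adapted to \eqref{fifthKPI} would then upgrade this to a quantitative gap of the form $Q(u(t))\le c\,(L_1(u_0)-d_1)<0$ for some $c>0$, uniform in $t$. For $p=2$ the threshold sits precisely at the ground-state level, since then $E(\varphi_1)=0$; this is the same critical dichotomy that underlies Corollary \ref{globalWP}, and it is what allows the conclusion to descend from the earlier $p\ge4$ restriction down to $p=2$, using this invariant set rather than the one in \cite{EsfahaniLevandovsky}.

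Finally I would combine the two halves. With $Q(u(t))\le-\delta$ the identity gives $I''(t)\le-8\delta$, whence $0\le I(t)\le I(0)+I'(0)t-4\delta t^2$, where $I(0)=\|yu_0\|_{L^2}^2$ and $I'(0)=-4\int y\,u_0\,\partial_x^{-1}\partial_y u_0$ are finite precisely because $u_0\in\tilde E^2$. The right-hand side becomes negative at a finite time $T_*$, which is impossible for the nonnegative quantity $I$; hence the solution cannot persist in $\tilde E^2$ up to $T_*$. Therefore the maximal $\tilde E^2$-existence time $T\le T_*$ is finite, and the blow-up alternative from the first part yields $\lim_{t\to T^-}\|u(t)\|_{\tilde E^2}=+\infty$, which is the assertion of the theorem. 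I expect the genuine obstacle to be the third step: proving the invariance of $Q^-$ together with the uniform gap $Q(u(t))\le-\delta$ requires the full variational theory of the ground state (the minimization characterizing $d_1$, its attainment, and the scaling identity), and, relatedly, making the virial computation rigorous at the limited $\tilde E^2$ regularity via the $X_s$-approximation. By contrast, the weighted persistence estimate and the formal derivation of $I''=8Q$ are comparatively routine.
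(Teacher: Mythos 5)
Your overall architecture (local theory in a smaller space, virial identity, invariant set, concavity of $\int y^2u^2$, uncertainty principle) matches the paper's, but two of your concrete choices break down, and they are exactly the points where the paper does something different. First, your invariant set: you propagate $Q^{-}=\{L_1<d_1,\ Q<0\}$ by a first-touching argument resting on the claimed characterization $d_1=\inf\{L_1(\phi):\phi\neq 0,\ Q(\phi)=0\}$. That characterization is not in the paper and is in fact false for $p=2$: along the two-parameter family $u_{\theta,\mu}(x,y)=\theta u(x,\mu y)$ one has $\|u_{\theta,\mu}\|_{L^2}^2,\ \|\partial_x^2u_{\theta,\mu}\|_{L^2}^2,\ \int u_{\theta,\mu}^{p+2}\sim \theta^{2},\theta^{2},\theta^{p+2}$ times $\mu^{-1}$ while $\|\partial_x^{-1}\partial_y u_{\theta,\mu}\|_{L^2}^2\sim\theta^2\mu$, so the constraint $Q=0$ forces $\mu\sim\theta^{p/2}$ and then $L_1(u_{\theta,\mu})\sim\theta^{2-p/2}\to 0$ as $\theta\to 0$ when $p=2$. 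So $\inf\{L_1:Q=0\}=0<d_1$, the first-touching argument gives no contradiction, and this is precisely why the $Q$-based blow-up of \cite{EsfahaniLevandovsky} is restricted to $p\geq 4$. The paper's whole contribution at this step is to replace $Q^-$ by the two-constraint set $\mathcal{J}=\{L_1<d_1,\ I<0,\ K>0\}$, to prove the new variational lower bound $\inf\{L_1(w): I(w)<0,\ K(w)=0\}\geq d_1$ (Lemma \ref{minimization2}, using the scaling $u_\lambda(x,y)=\lambda u(x,\lambda^{-2}y)$ together with $K=0$), and to rewrite the virial as $\tfrac18 J''=I(u)-K(u)$, so that each sign condition is propagated through its own constrained minimization (Theorem \ref{minimization} for $I$, Lemma \ref{minimization2} for $K$); the quantitative gap then comes out as $\tfrac18 J''<I(u(t))\leq L_1(u_0)-d_1<0$ via the scaling $u_\lambda(x,y)=\lambda^2u(\lambda x,\lambda^3 y)$.

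Second, your choice of $\tilde{E}^2$ is incompatible with the conclusion you need. You set $\tilde{E}^2=\{u\in E^2: yu\in L^2\}$ with the weight in the norm, but the mechanism drives $J(t)=\|yu(t)\|_{L^2}^2$ \emph{down} to $0$, not up, and on the invariant set the paper notes $\|u(t)\|_{E^2}$ stays bounded; moreover your own Gronwall estimate $\frac{d}{dt}\|yu\|_{L^2}^2=-4\int yuz\lesssim \|yu\|_{L^2}\|u\|_{E^2}$ shows the weighted norm persists as long as the $E^2$ norm does. So with your norm \emph{nothing} blows up, and your blow-up alternative would actually extend the solution past the virial time, an internal contradiction traceable to the false invariance claim above. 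The paper instead takes $\tilde{E}^2=\{u_0\in E^2:\partial_y u_0\in L^2\}$, proves local well-posedness there via the extra energy estimate $\partial_t\|\partial_y u\|_{L^2}\lesssim\|u\|_{L^\infty}\|\partial_x u\|_{L^\infty}\|\partial_y u\|_{L^2}$, keeps $yu_0\in L^2$ and $u_0\in\mathcal{J}$ as hypotheses on the data (the weight propagated by Saut's theorem, not by the norm), and extracts the divergent quantity from $J(t)\to 0$ through the uncertainty inequality $\|u_0\|_{L^2}^2\lesssim\|yu(t)\|_{L^2}\|\partial_y u(t)\|_{L^2}$ in Lemma \ref{blowup}: it is the transverse derivative $\|\partial_y u\|_{L^2}$, i.e.\ the $\tilde{E}^2$-norm in the paper's sense, that tends to infinity. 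Your weighted-persistence law and the formal derivation of $I''=8Q$ are correct and match the paper's Lemma \ref{virial}, but without the functional $K$ and with the weight placed in the norm, the proof does not close for $p=2$.
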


For the case $\mathbb{R}\times\mathbb{T},$ we mention the result from \cite{KenigIonescu} which proves global well-posedness on $\mathbb{R}\times\mathbb{T}$ of \eqref{fifthKPI} with $p=1$ in the energy space $Z^2(\mathbb{R}\times\mathbb{T})=\{g:\mathbb{R}\times\mathbb{T}\rightarrow\mathbb{T}:\|g\|_{Z^2}=\|(1+\xi^2+\frac{n^2}{\xi^2})\hat{g}(\xi,n)\|_{L^2(\mathbb{R}\times\mathbb{T})}<\infty\}$. We improve their localized in time Strichartz estimates to prove global well-posedness for the equation \eqref{fifthKPI} with $p=2$ on $\mathbb{R}\times\mathbb{T}$ in $Z^2(\mathbb{R}\times\mathbb{T).}$ In order to prove that, we prove an intermediary local well-posedness result for small data. We define 
$$Z^s(\mathbb{R}\times\mathbb{T})=\{g:\mathbb{R}\times\mathbb{T}\rightarrow\mathbb{T}:\|g\|_{Z^s}=\|(1+|\xi|^s+\frac{n^2}{\xi^2})\hat{g}(\xi,n)\|_{L^2(\mathbb{R}\times\mathbb{T})}<\infty\}.$$
\begin{theorem} 
 The initial value problem (\ref{eqmkp5RT}) is locally well-posed in $Z^{s}(\mathbb{R}\times\mathbb{T}),s>\frac{31}{20}.$ More precisely, given $u_0 \in Z^{s}(\mathbb{R}\times\mathbb{T}),s>\frac{31}{20}$, with $\|u_0\|_{Z^s}$ small enough, there exists $T=T(\|u_0\|_{Z^{s}})$ and a unique solution $u$ to the IVP such that $u \in C([0,T]:Z^{s}(\mathbb{R}\times \mathbb{T})).$ Moreover, the mapping $u_0 \rightarrow u \in C([0,T]:Z^{s}(\mathbb{R}\times \mathbb{T}))$ is continuous. Also, we have conservation of the quantities 
 $$M(u(t))=\int_{\mathbb{R}\times\mathbb{T}}u(t)^2dxdy \text{  and  }E_{(s)}(u(t))=\frac{1}{2}\int_{\mathbb{R}\times\mathbb{T}}|D_{x}^su(t)|^2+[\partial_{x}^{-1}\partial_{y}u(t)]^2-\frac{1}{2}\int_{\mathbb{R}\times\mathbb{T}}u(t)^4.$$
 \end{theorem}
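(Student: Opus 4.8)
The plan is to solve the Duhamel (integral) formulation
$$u(t) = S(t)u_0 - \int_0^t S(t-t')\,\partial_x(u^3)(t')\,dt'$$
by a fixed point argument, where $S(t)$ is the unitary group generated by the linear part $\partial_x^5 + \partial_x^{-1}\partial_y^2$, with space-time symbol $\omega(\xi,n)=\xi^5 + \tfrac{n^2}{\xi}$ on $\mathbb{R}\times\mathbb{T}$. The natural resolution space is a Bourgain-type space $X^{s,b}$ adapted to this semi-periodic dispersion relation, whose norm is the weighted $L^2$ of the space-time Fourier transform with weight $(1+|\xi|^s + \tfrac{n^2}{\xi^2})\langle \tau - \omega(\xi,n)\rangle^{b}$, chosen so that $X^{s,b}_T \hookrightarrow C([0,T]:Z^s)$ for $b>\tfrac12$. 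First I would record the standard linear estimates in this setting: the homogeneous bound $\norm{S(t)u_0}_{X^{s,b}_T} \lesssim \norm{u_0}_{Z^s}$ and the inhomogeneous (Duhamel) bound $\norm{\int_0^t S(t-t')F\,dt'}_{X^{s,b}_T}\lesssim T^{\theta}\norm{F}_{X^{s,b-1}_T}$ for some $\theta>0$, the latter gaining a small power of $T$ that drives the contraction.

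The technical heart is to establish the improved localized Strichartz estimates for $S(t)$ on $\mathbb{R}\times\mathbb{T}$, refining those of \cite{KenigIonescu}. Here I would decompose dyadically in the continuous frequency $\xi$ and analyze the oscillatory integral and exponential sum coming from the fifth-order dispersion in $x$ together with the periodic $y$-frequencies $n\in\mathbb{Z}$. The essential gain comes from the strong dispersive decay of $\partial_x^5$ in $\xi$, combined with a lattice-point count in $n$ for the compact $y$-direction. From these Strichartz bounds I would deduce the central trilinear estimate
$$\norm{\partial_x(u_1u_2u_3)}_{X^{s,b-1}} \lesssim \prod_{i=1}^3\norm{u_i}_{X^{s,b}}, \qquad s > \tfrac{31}{20},$$
via a Littlewood--Paley decomposition, a case analysis on the relative sizes of the output and input frequencies, and the resonance function $H = \omega(\xi,n) - \sum_i \omega(\xi_i,n_i)$ together with the algebraic identity relating $H$ to the modulation variables $\tau_i - \omega(\xi_i,n_i)$, which allows the worst (nearly resonant) interaction to be absorbed. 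The exponent $\tfrac{31}{20}$ is precisely the threshold at which this estimate closes.

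With the trilinear estimate in hand, the map $\Phi(u) = S(t)u_0 - \int_0^t S(t-t')\partial_x(u^3)\,dt'$ is a contraction on a small ball of $X^{s,b}_T$ once $\norm{u_0}_{Z^s}$ is small and $T$ is chosen accordingly; the smallness compensates for the $L^2$-critical scaling and the low-$\xi$ singularity carried by $\partial_x^{-1}$ and the weight $\tfrac{n^2}{\xi^2}$. This yields a unique fixed point $u\in C([0,T]:Z^s)$, and Lipschitz dependence on the data---hence continuity of the flow map---follows by applying the same trilinear estimate to the difference of two solutions. Finally, the conservation of $M(u)$ and $E_{(s)}(u)$ is obtained by verifying the identities first for smooth solutions through direct differentiation in $t$ and integration by parts (noting that $\partial_x(u^3)$ is a perfect $x$-derivative, which kills the $M$-balance), and then passing to the limit using the continuity of the flow.

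The step I expect to be the main obstacle is the improved Strichartz and trilinear estimate: the mixed continuous--discrete geometry of $\mathbb{R}\times\mathbb{T}$ forces a delicate separation of the dispersive smoothing in $\xi$ from the number-theoretic counting in $n$, and the low $x$-frequencies are genuinely singular because both the weight $\tfrac{n^2}{\xi^2}$ and the operator $\partial_x^{-1}$ degenerate as $\xi\to 0$. Sharpening the counting precisely enough to reach $s=\tfrac{31}{20}$, rather than a larger exponent, is where the real work lies.
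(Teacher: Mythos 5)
Your proposal takes a genuinely different route from the paper, but it has a genuine gap at its center: the trilinear estimate $\norm{\partial_x(u_1u_2u_3)}_{X^{s,b-1}}\lesssim\prod_i\norm{u_i}_{X^{s,b}}$ for $s>\frac{31}{20}$ is simply asserted, and it is precisely the step that is not available for KP-I--type dispersion. The sign of the transverse term $-\partial_x^{-1}\partial_y^2$ makes the resonance function $H=\omega(\xi,n)-\sum_i\omega(\xi_i,n_i)$ degenerate on large frequency regions, and for KP-I equations it is a well-known phenomenon (going back to Molinet--Saut--Tzvetkov for the third-order case) that the flow map fails to be smooth, which rules out any contraction-mapping scheme in Bourgain-type spaces, since a fixed point argument with a trilinear estimate would force the flow map to be at least $C^3$. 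The semi-periodic geometry $\mathbb{R}\times\mathbb{T}$ makes matters worse, not better: Strichartz estimates there carry losses (the paper's localized dispersive bounds \eqref{eqdispersive1}--\eqref{eqdispersive2} only gain $2^{-\frac{29j}{20}}$ and $2^{-\frac{17k}{20}}$ on time intervals of length $2^{-2j}$ resp.\ $2^{-2j-k}$), which is exactly why Ionescu--Kenig, whom the paper refines, avoid Picard iteration in this setting. Your claim that $\frac{31}{20}$ ``is precisely the threshold at which this estimate closes'' is reverse-engineered from the statement: in the paper the exponent arises from entirely different bookkeeping, namely $\frac{31}{20}=\frac{11}{20}+1$, the extra derivative coming from estimating $\|\partial_x u\|_{L^2_TL^{\infty}_{xy}}$ in the energy method. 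A further mismatch: a subcritical contraction with a $T^{\theta}$ gain, as you set it up, would need no smallness of $\|u_0\|_{Z^s}$, whereas the smallness in the theorem is genuinely used in the paper (in Lemma \ref{aprioriRT}, to close the continuity argument $g(T)\leq C(1+g(T)^2)\exp(2g(T))\|u_0\|_{Z^s}^2$).

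For contrast, the paper's proof is a compactness/energy scheme: improved short-time Strichartz estimates for frequency-localized data, summed over $O(2^{2j})$ (resp.\ $O(2^{2j+k})$) time slices to get the linear estimate of Proposition \ref{linearestimate5RT}; Kato--Ponce commutator energy estimates plus Gr\"onwall (Lemma \ref{energyestimateRT}); the a priori bound on $g(T)$ for small data; existence via Iorio--Nunes smooth solutions and a Bona--Smith-type frequency truncation $\phi_k=P^k\phi$ with the decay function $h_{\phi}(k)$ to upgrade convergence from $L^2$ to $C([0,T]:Z^s)$; and uniqueness and continuity of the flow from the same difference estimates. Your plan for the conservation laws also does not go through as stated for $E_{(s)}$: direct differentiation and integration by parts closes only for $s=2$ (the true energy), since for general $s$ the term $\int J_x^s u\, J_x^s\partial_x(u^3)$ does not cancel against $\frac{d}{dt}\bigl(-\frac{1}{2}\int u^4\bigr)$; the paper instead obtains the identity as a limit along the approximating smooth solutions, using the quantitative convergence of $\|J_x^s v_{k,k'}\|$, $\|\partial_x^{-1}\partial_y v_{k,k'}\|$, and $\|v_{k,k'}\|_{L^4_{xy}}$. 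To salvage your approach you would have to either prove the trilinear estimate (unlikely, for the reasons above) or move to short-time Bourgain spaces $F^s$ with an energy component, which is a much heavier machine and no longer a pure fixed-point argument.
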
 
As a corollary, we obtain global well-posedness in $Z^2$ with small data. 

We organize the paper as the following: in Section \ref{RR}, we prove global well-posedness in $\mathbb{R}\times \mathbb{R},$ we follow with the transverse blow-up in Section \ref{BU} and we conclude with the global well-posedness result in $\mathbb{R}\times \mathbb{T}$ in Section \ref{RT}.

\section{Well-posedness for the $L^2$ critical fifth order KP-I equation }\label{RR}
\subsection{Local Well-Posedness Result}
In this section, we will deal with the following equation 
\begin{equation}\label{fifthKPI2}
\begin{cases}
&u_t-\partial_{x}^{5}-\partial_{x}^{-1}\partial_{y}^{2}u+\partial_{x}(u^{3})=0 \\
&u(0,x,y)=u_0(x,y)\in H^s(\mathbb{R}^2)
\end{cases}
\end{equation}
We begin by introducing the space, for $s \in \mathbb{R}$, 
$$H_{-1}^{s}(\mathbb{R}^2)=\{ u \in \mathcal{S}'(\mathbb{R}^2): \|u\|_{H_{-1}^{s}(\mathbb{R}^2)}<\infty\}$$
where 
$$\|u\|_{H_{-1}^{s}(\mathbb{R}^2)}=\Big(\int \int (1+|\xi|^{-1})^2(1+|\xi|^2+|\mu|^2)^s|\hat{u}(\xi,\mu)|^2d\xi d\mu\Big)^{\frac{1}{2}}.$$

The proof of Theorem \eqref{mainthm} follows from the following proposition. 

\begin{proposition}\label{mainproposition}
    Suppose $u_0\in H_{-1}^{s}(\mathbb{R}^2)$, for $s_0$ sufficiently large, is a solution to \eqref{fifthKPI}, for $t\in [0,T_0].$ Suppose that there exists $s, \tilde{T}=\tilde{T}(\|u_0\|_{E^s})$ and $C=C_s$ such that, whenever $T_0\leq \tilde{T},$ 
    \begin{itemize}
        \item[a)] $\sup_{0<t<T_0}\|u(t)\|_{E^s}\leq C\|u_0\|_{E^s}$
          
           and 
        \item[b)]  $\|\partial_xu\|_{L^{2}_{T_0}L^{\infty}_{xy}}+\|u\|_{L^{2}_{T_0}L^{\infty}_{xy}}\leq C_{\varepsilon}T_{0}^{\varepsilon}(1+\|u_0\|_{E^s}+\|u_0\|_{E^s}^{3})\|u_0\|_{E^s}.$ 
         \end{itemize}
    Then equation \eqref{fifthKPI2} is locally well posed in $E^s.$     

\end{proposition}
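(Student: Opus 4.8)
The substantive analytic content of the statement is entirely contained in hypotheses (a) and (b); what remains is the standard Bona--Smith approximation scheme that converts these a priori bounds into a full local theory (existence, uniqueness, persistence of regularity and continuous dependence). My plan is to use the smooth solutions produced by the earlier local theory (the quoted proposition of \cite{Esfahani}, valid for $u_0\in X_s$ with $s\ge 5$) as an approximating family, to bound them uniformly by means of (a), and to show that they form a Cauchy sequence in a weaker norm by means of (b).

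First I would fix $u_0\in E^s$ with $s>\tfrac32$ and set $u_0^N=P_{\le N}u_0$ for a smooth frequency truncation $P_{\le N}$, so that $u_0^N\to u_0$ in $E^s$ and $\sup_N\|u_0^N\|_{E^s}\le \|u_0\|_{E^s}$. Each $u_0^N$ launches a smooth solution $u^N$ by the cited theory. A standard continuation argument then places all of them on a common time interval $[0,\tilde T]$ with $\tilde T=\tilde T(\|u_0\|_{E^s})$: the a priori bound (a) prevents the $E^s$ norm from reaching any blow-up threshold, and the persistence of the higher $X_s$ regularity follows on the same interval from the analogous commutator energy estimates, whose Gronwall factor is controlled by exactly the $L^2_{\tilde T}L^\infty_{xy}$ quantities in (b). This yields the uniform bounds $\sup_{0<t<\tilde T}\|u^N(t)\|_{E^s}\le C\|u_0\|_{E^s}$ and a uniform bound on $\|\partial_x u^N\|_{L^2_{\tilde T}L^\infty_{xy}}+\|u^N\|_{L^2_{\tilde T}L^\infty_{xy}}$.

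The engine of the whole argument is an $L^2$ difference estimate. For two solutions $u,v$ of \eqref{fifthKPI2} set $w=u-v$; since the group generated by $\partial_x^5+\partial_x^{-1}\partial_y^2$ is skew-adjoint on $L^2$, pairing the difference equation with $w$ and integrating $\partial_x(u^3-v^3)=\partial_x\big((u^2+uv+v^2)w\big)$ by parts yields $\tfrac12\tfrac{d}{dt}\|w\|_{L^2}^2=-\tfrac12\int \partial_x(u^2+uv+v^2)\,w^2\le \tfrac12\|\partial_x(u^2+uv+v^2)\|_{L^\infty_{xy}}\|w\|_{L^2}^2$. Bounding $\partial_x(u^2+uv+v^2)$ in $L^\infty$ by $(\|u\|_{L^\infty}+\|v\|_{L^\infty})(\|\partial_x u\|_{L^\infty}+\|\partial_x v\|_{L^\infty})$ and applying Cauchy--Schwarz in time, the quantity $\int_0^{\tilde T}\|\partial_x(u^2+uv+v^2)\|_{L^\infty_{xy}}\,dt$ becomes a product of norms controlled by (b), hence finite. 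Gronwall then gives $\|w(t)\|_{L^2}\le e^{K}\|w(0)\|_{L^2}$ with $K=K(\|u_0\|_{E^s})$. Applied to $u^N,u^M$ this shows $\{u^N\}$ is Cauchy in $C([0,\tilde T];L^2)$, and applied to two solutions with identical data it gives uniqueness outright.

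Interpolating the $C_TL^2$ Cauchy property against the uniform $E^s$ bound shows $u^N\to u$ in $C([0,\tilde T];E^{s'})$ for every $s'<s$. Together with the uniform $L^2_{\tilde T}L^\infty_{xy}$ control from (b), the $C_TL^2$ convergence forces $(u^N)^3\to u^3$ in $L^1_{\tilde T}L^2$, so $\partial_x((u^N)^3)\to\partial_x(u^3)$ distributionally and the limit $u\in L^\infty_TE^s\cap C_TL^2$ solves \eqref{fifthKPI2}, while the uniform bound furnishes $u\in C_w([0,\tilde T];E^s)$. The main obstacle is the last step: upgrading weak to strong continuity $u\in C([0,\tilde T];E^s)$ and proving continuity of the data-to-solution map. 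Because the nonlinearity carries a derivative, the difference estimate above necessarily loses one derivative relative to the top norm, so one cannot close directly at the $E^s$ level; here I would run the genuine Bona--Smith argument, comparing the solution with data $u_0$ against the solution with smoothed data $P_{\le N}u_0$ and balancing that derivative loss against the quantitative rate at which $P_{\le N}u_0\to u_0$, which supplies the compensating powers of $N^{-1}$. This simultaneously yields strong continuity at the top regularity and the continuity of $u_0\mapsto u$ in $E^s$, completing the proof.
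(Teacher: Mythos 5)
Your architecture is the standard one, and it is essentially what the paper compresses into a single sentence; the only structural difference is the source of the approximating solutions. The paper regularizes the \emph{equation} (parabolic regularization, passing to the limit as in Iorio--Nunes) and then invokes Bona--Smith for continuity of the flow map, whereas you regularize the \emph{data} and lean on the pre-existing smooth local theory. Both are legitimate, and your $L^2$ difference estimate --- skew-adjointness of $\partial_x^5+\partial_x^{-1}\partial_y^2$, the identity $\int w\,\partial_x\bigl((u^2+uv+v^2)w\bigr)=\tfrac12\int \partial_x(u^2+uv+v^2)\,w^2$, Cauchy--Schwarz in time against the norms in hypothesis (b), then Gronwall --- is exactly the engine either version runs on; it also delivers uniqueness in the class $u,\partial_x u\in L^2_TL^\infty_{xy}$, which is the right class here.

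Two concrete points in your sketch need repair. First, a high-frequency truncation $P_{\le N}u_0$ of $u_0\in E^s$ need \emph{not} lie in $H^{s_0}_{-1}(\mathbb{R}^2)$, the space in which the hypothesis actually furnishes solutions: the weight $(1+|\xi|^{-1})$ is a low-frequency condition, and $E^s$ controls $\int (\mu^2/\xi^2)|\hat u_0|^2$ but not $\int |\xi|^{-2}|\hat u_0|^2$ near $\mu=0$. You must also excise the frequencies $|\xi|\le 1/N$; the convergence $u_0^N\to u_0$ in $E^s$ survives by dominated convergence, and your uniform bounds are unaffected. Second, interpolating the $C_TL^2$ Cauchy property against the uniform $E^s$ bound does not by itself give convergence in $C([0,\tilde T];E^{s'})$: every $E^{s'}$ norm contains the \emph{same} component $\|\partial_x^{-1}\partial_y\,\cdot\,\|_{L^2}$, which does not interpolate away, so this component requires its own difference estimate. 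Pairing the $\partial_x^{-1}\partial_y$-derivative of the difference equation with $\partial_x^{-1}\partial_y w$, the term with $a\,\partial_y w$ (where $a=u^2+uv+v^2$) is harmless after writing $\partial_y w=\partial_x(\partial_x^{-1}\partial_y w)$ and integrating by parts, but the term $\int w\,(\partial_x^{-1}\partial_y w)\,\partial_y a$ involves $\partial_y u$, which $E^s$ does not control. This is exactly the derivative loss your final paragraph defers to Bona--Smith, and it can indeed be absorbed there: the truncated data satisfy $\|\partial_y u_0^N\|_{L^2}\lesssim N\|u_0\|_{E^s}$ (using $|\mu|\le N|\mu|/|\xi|$ on the truncated support), this growth propagates under the flow by the $\partial_y$-energy estimate, and it is beaten by the quantitative decay of $\|u^N-u^M\|_{L^2}$ coming from the tail of $\hat u_0$. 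This is precisely the balancing the paper carries out explicitly in its $\mathbb{R}\times\mathbb{T}$ argument (Step 2 of Theorem \ref{LWP5RT}, around \eqref{partialy}). With these two fixes your proposal closes and is a faithful, more detailed rendering of the paper's intended proof.
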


The proof of this is standard, using a parabolic regularization and arguing as in \cite{IorioNunes} and the continuity of the flow map follows from the well-known Bona-Smith argument as in \cite{BonaSmith}. It remains to prove the a priori estimates a) and b) in the proposition. We follow the same ideas used in \cite{Kenig}. First, we have an energy estimate. 

\begin{lemma}\label{energyestimates}
    Let $u$ be a solution to \eqref{fifthKPI2}, with $u_0 \in H_{-1}^{s_0}(\mathbb{R}^2),$ for $s_0$ sufficiently large. Then, for $s\geq 1,$ we have 
    $$\partial_t(\|\langle D_x\rangle^su\|_{L^{2}_{xy}}^2+\| D_{y}D^{-1}_{x}u\|_{L^{2}_{xy}}^2)\leq C(\|u\|_{L^{\infty}_{xy}}^{2}+\|\partial_x u\|_{L^{\infty}_{xy}}^{2})\|\langle D_x\rangle^su\|_{L^{2}_{xy}}^2+\| D_{y}D^{-1}_{x}u\|_{L^{2}_{xy}}^2).$$
\end{lemma}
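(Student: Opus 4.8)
The plan is to differentiate the energy $\mathcal{E}(t)=\|\langle D_x\rangle^s u\|_{L^2_{xy}}^2+\|D_yD_x^{-1}u\|_{L^2_{xy}}^2$ directly in time, substitute the equation in the form $u_t=\partial_x^5u+\partial_x^{-1}\partial_y^2u-\partial_x(u^3)$, and show that the purely dispersive contributions cancel while the nonlinear ones are dominated by the right-hand side. Since $u$ is real I work with the real pairing $\langle f,g\rangle=\int_{\mathbb{R}^2}fg\,dxdy$, and I use $\|D_yD_x^{-1}u\|_{L^2_{xy}}=\|\partial_x^{-1}\partial_y u\|_{L^2_{xy}}$ by Plancherel. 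Moving the (self-adjoint) operators $\langle D_x\rangle^{2s}$ and $\partial_x^{-2}\partial_y^2$ onto $u$ gives $\tfrac{1}{2}\partial_t\mathcal{E}=\langle u_t,\langle D_x\rangle^{2s}u\rangle+\langle u_t,\partial_x^{-2}\partial_y^2u\rangle$.

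First I would dispose of the linear terms. After the substitution these are governed by the multipliers $\langle D_x\rangle^{2s}\partial_x^5$, $\langle D_x\rangle^{2s}\partial_x^{-1}\partial_y^2$, $\partial_x^{-2}\partial_y^2\partial_x^5=\partial_x^3\partial_y^2$, and $\partial_x^{-2}\partial_y^2\partial_x^{-1}\partial_y^2=\partial_x^{-3}\partial_y^4$; every one of these has a purely imaginary symbol, hence is skew-adjoint on real $L^2$, so its pairing with $u$ vanishes. This is the usual cancellation that powers the energy method and costs nothing beyond integration by parts (parity of the symbols).

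It remains to estimate the two nonlinear contributions. For the transverse piece I substitute $-\partial_x(u^3)$ and integrate by parts in $x$ to obtain $2\langle u^3,\partial_x^{-1}\partial_y^2u\rangle$; setting $v=\partial_x^{-1}\partial_y u$ this is $2\langle u^3,\partial_y v\rangle$, and integrating by parts in $y$ gives $-6\langle u^2\partial_y u,v\rangle$. The key trick is to absorb the uncontrolled $\partial_y u$ via $\partial_y u=\partial_x v$; a final integration by parts in $x$ turns the term into $6\langle u\,\partial_x u,v^2\rangle$, which is bounded by $C\|u\|_{L^\infty_{xy}}\|\partial_x u\|_{L^\infty_{xy}}\|\partial_x^{-1}\partial_y u\|_{L^2_{xy}}^2$, so no $y$-derivative of $u$ ever survives. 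For the $\langle D_x\rangle^s$ piece the nonlinear contribution is $-2\langle \langle D_x\rangle^s\partial_x(u^3),\langle D_x\rangle^s u\rangle$, and writing $\partial_x(u^3)=3u^2\partial_x u$ I peel off the top-order term $3\langle u^2\langle D_x\rangle^s\partial_x u,\langle D_x\rangle^s u\rangle$. This term I integrate by parts in $x$ to get $-\tfrac32\langle\partial_x(u^2),(\langle D_x\rangle^s u)^2\rangle$, bounded by $C\|u\|_{L^\infty_{xy}}\|\partial_x u\|_{L^\infty_{xy}}\|\langle D_x\rangle^s u\|_{L^2_{xy}}^2$. The remainder is the commutator $3[\langle D_x\rangle^s,u^2]\partial_x u$, which by the Kato--Ponce commutator estimate is controlled by $\|\partial_x(u^2)\|_{L^\infty_{xy}}\|\langle D_x\rangle^{s-1}\partial_x u\|_{L^2_{xy}}+\|\langle D_x\rangle^s(u^2)\|_{L^2_{xy}}\|\partial_x u\|_{L^\infty_{xy}}$; using $\|\langle D_x\rangle^{s-1}\partial_x u\|_{L^2_{xy}}\lesssim\|\langle D_x\rangle^s u\|_{L^2_{xy}}$ and the fractional Leibniz bound $\|\langle D_x\rangle^s(u^2)\|_{L^2_{xy}}\lesssim\|u\|_{L^\infty_{xy}}\|\langle D_x\rangle^s u\|_{L^2_{xy}}$, everything collapses to $C\|u\|_{L^\infty_{xy}}\|\partial_x u\|_{L^\infty_{xy}}\|\langle D_x\rangle^s u\|_{L^2_{xy}}^2$. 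Collecting the pieces and using $ab\le\tfrac12(a^2+b^2)$ yields the stated inequality, and the appearance of $\langle D_x\rangle^{s-1}$ is exactly why $s\ge1$ is required.

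The main obstacle I anticipate is the $\langle D_x\rangle^s$ nonlinear term: one must arrange the commutator estimate so that, after the integration by parts, the surviving $L^\infty$ factors are only $\|u\|_{L^\infty_{xy}}$ and $\|\partial_x u\|_{L^\infty_{xy}}$ and never an $s$-order or a $y$-derivative, which forces careful bookkeeping of where each derivative lands and an appeal to the sharp fractional Leibniz rule. A secondary technical point is that all the integrations by parts are a priori legitimate only because $u_0\in H^{s_0}_{-1}$ with $s_0$ large, so that $u(t)$ is smooth and decaying; the resulting bound is nonetheless uniform in $s_0$, which is precisely what is needed to feed it into Proposition \ref{mainproposition}.
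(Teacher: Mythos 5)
Your proof is correct and is exactly the argument the paper intends but leaves implicit (it states the lemma without proof, deferring to the energy-method ideas of \cite{Kenig} and \cite{KenigZiesler}): skew-adjointness of the dispersive multipliers, the Kato--Ponce commutator plus fractional Leibniz bound for the $\langle D_x\rangle^s$ piece (whence $s\geq 1$), and the substitution $\partial_y u=\partial_x(\partial_x^{-1}\partial_y u)$ followed by one more integration by parts in $x$ to avoid any raw $y$-derivative in the transverse term. Your bookkeeping of constants and the regularization remark ($u_0\in H^{s_0}_{-1}$ with $s_0$ large justifying the integrations by parts) are both sound, so nothing is missing.
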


We define the semigroup operator associated to the equation \eqref{fifthKPI2} by $$W(t)=\int \int e^{i(x\xi+y\mu)}e^{it(\xi^5+\frac{\mu^2}{\xi})}d\xi d\mu.$$

Fix a function $\eta:\mathbb{R}\rightarrow [0,1]$ such that it is smooth, supported in $\{|\xi| \leq \frac{8}{5}\}$ and equals to $1$ for $|\xi| \leq \frac{5}{4}$. 
Let $\Phi(\xi) = \eta(|\xi|/2)-\eta(|\xi|)$. For $N \in 2^{\mathbb{Z}}$ let $\Phi_N(\xi)=\Phi(\frac{\xi}{N})$ and define $P_N$ by $\widehat{P_N u}(\xi,\mu) = \Phi_N (\xi)\hat{u}(\xi, \mu).$ 

\begin{lemma} \label{StrichartzEstimates}
    We have
   $$ \|W(t)\phi\|_{L^{q}_{T}L^{r}_{xy}}\leq C\|\phi\|_{L^{2}_{xy}}$$
   for $\frac{5}{4q}+\frac{1}{r}=\frac{1}{2},$ with $ 2\leq r<\infty,$ and with $C$ independent of $T.$      
\end{lemma}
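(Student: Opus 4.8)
The plan is to deduce the space–time estimate from a pointwise-in-time dispersive decay estimate together with the standard $TT^*$/Strichartz machinery. Note first that the admissibility relation $\frac{5}{4q}+\frac1r=\frac12$ is exactly the scaling condition: under $\phi(x,y)\mapsto \phi(\lambda x,\lambda^3 y)$ and $t\mapsto \lambda^5 t$ the two sides of the claimed inequality transform identically, which both confirms that no loss of derivatives is expected and pins down the decay rate I must prove. Concretely, I will establish the fixed-time bound
\begin{equation*}
\|W(t)\phi\|_{L^\infty_{xy}}\lesssim |t|^{-4/5}\|\phi\|_{L^1_{xy}},
\end{equation*}
and interpolate it with the trivial unitary identity $\|W(t)\phi\|_{L^2_{xy}}=\|\phi\|_{L^2_{xy}}$.

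For the dispersive estimate I would compute the convolution kernel $K_t(x,y)=\iint e^{i(x\xi+y\mu)+it(\xi^5+\mu^2/\xi)}\,d\xi\,d\mu$ and integrate in $\mu$ first. The $\mu$–integral is a Fresnel integral with quadratic phase $t\mu^2/\xi$, so it evaluates to $\sqrt{\pi|\xi|/|t|}\,e^{i\pi\,\mathrm{sgn}(t/\xi)/4}\,e^{-iy^2\xi/(4t)}$; crucially the resulting amplitude factor $|\xi|^{1/2}$ vanishes as $\xi\to0$, which neutralizes the KP-I singularity of $\mu^2/\xi$ in this computation. Setting $b=x-y^2/(4t)$ reduces the kernel to $|t|^{-1/2}$ times the one-dimensional oscillatory integral $I=\int |\xi|^{1/2}e^{i(b\xi+t\xi^5)}\,d\xi$ (up to the harmless unimodular sign factor). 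Rescaling $\xi=|t|^{-1/5}\zeta$ extracts a further factor $|t|^{-3/10}$ and leaves the universal integral $J(\tilde b)=\int|\zeta|^{1/2}e^{i(\tilde b\zeta\pm\zeta^5)}\,d\zeta$ with $\tilde b=b|t|^{-1/5}$, so that $\|K_t\|_{L^\infty_{xy}}\lesssim |t|^{-1/2-3/10}\sup_{\tilde b}|J(\tilde b)|=|t|^{-4/5}\sup_{\tilde b}|J(\tilde b)|$.

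The main obstacle is the uniform bound $\sup_{\tilde b}|J(\tilde b)|\lesssim 1$: the quintic phase $\zeta^5$ has a degenerate critical point at the origin, the amplitude grows like $|\zeta|^{1/2}$ at infinity (so the integral converges only in the oscillatory sense), and naive stationary phase blows up as the two real critical points $\zeta=\pm(|\tilde b|/5)^{1/4}$ coalesce. I would handle this by a dyadic decomposition in $|\zeta|$. On $|\zeta|\lesssim1$ the amplitude is integrable and contributes $O(1)$ outright; on each shell $|\zeta|\sim 2^{j}$, $j\ge0$, I apply the van der Corput second-derivative estimate using $|\partial_\zeta^2(\tilde b\zeta\pm\zeta^5)|\sim 2^{3j}$, which yields a contribution $\lesssim 2^{-3j/2}\cdot 2^{j/2}=2^{-j}$, uniformly in $\tilde b$ precisely because the second-derivative bound is insensitive to the location of the critical points. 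Summing the geometric series gives the desired uniform bound.

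Finally, with decay exponent $\sigma=4/5$ in hand, interpolation of the $L^1\to L^\infty$ and $L^2\to L^2$ bounds gives $\|W(t)\phi\|_{L^r_{xy}}\lesssim |t|^{-\sigma(1-2/r)}\|\phi\|_{L^{r'}_{xy}}$, and the non-endpoint $TT^*$ argument (Hardy–Littlewood–Sobolev in the time variable) converts this into the claimed $L^q_TL^r_{xy}$ bound precisely when $\frac2q=\sigma\bigl(1-\frac2r\bigr)$, i.e. $\frac{5}{4q}+\frac1r=\frac12$. The independence of the constant from $T$ is automatic, since the fixed-time kernel bound is global in $t$. The range $2\le r<\infty$ is exactly what keeps the Hardy–Littlewood–Sobolev exponent $\frac2q=\frac45\bigl(1-\frac2r\bigr)$ strictly below $1$ (with $r=2$, $q=\infty$ being the trivial energy estimate), so that the forbidden $L^\infty$ endpoint $r=\infty$, $q=\frac52$ is legitimately excluded.
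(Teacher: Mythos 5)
Your proof is correct, and it takes a genuinely different route from the paper. Both arguments share the first step --- integrating out the $\mu$ variable as a Fresnel integral, which produces the factor $|t|^{-1/2}|\xi|^{1/2}$ and tames the KP singularity at $\xi=0$ --- but they diverge afterwards. The paper never proves a global dispersive bound: it works with Littlewood--Paley pieces $P_N$, invokes the frequency-localized oscillatory estimate of Guo--Peng--Wang, $\sup_x\big|\int e^{it\xi^5}e^{ix\xi}\psi(\xi/N)\,d\xi\big|\lesssim |t|^{-\theta}N^{1-5\theta}$ for $\theta\in[0,\frac12]$, interpolates, applies Hardy--Littlewood--Sobolev in time for each dyadic piece, and then chooses $\theta$ depending on $(q,r)$ so that the power of $N$ vanishes exactly on the admissibility line $\frac{5}{4q}+\frac1r=\frac12$; recombining the pieces (the Minkowski/square-function step for $r\geq 2$, left implicit in the paper) finishes the proof. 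You instead prove the unlocalized fixed-time bound $\|W(t)\phi\|_{L^\infty_{xy}}\lesssim |t|^{-4/5}\|\phi\|_{L^1_{xy}}$ directly, reducing by scaling to the uniform boundedness of $J(\tilde b)=\int|\zeta|^{1/2}e^{i(\tilde b\zeta\pm\zeta^5)}\,d\zeta$, which you establish correctly: on $|\zeta|\lesssim 1$ the amplitude is absolutely integrable, and on each shell $|\zeta|\sim 2^j$ the second-derivative van der Corput test with $|\partial_\zeta^2(\tilde b\zeta\pm\zeta^5)|\sim 2^{3j}$ (single-signed on each one-sided shell, and independent of $\tilde b$, which is what gives uniformity as the critical points coalesce) yields $2^{-3j/2}\cdot 2^{j/2}=2^{-j}$, summable in $j$. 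Your route buys a cleaner and more self-contained proof --- one global decay exponent $\sigma=\frac45$, a single application of $TT^*$ with Hardy--Littlewood--Sobolev (legitimate since admissibility with $2<r<\infty$ forces $q>\frac52>2$, and $r=2$, $q=\infty$ is the trivial unitary bound), and no dyadic summation issue; the paper's route buys the family of frequency-localized bounds with explicit $N^{\beta(q,r)}$ losses off the admissible line, which is information your argument does not produce. One cosmetic caveat: the Fresnel phase factor $e^{i\pi\,\mathrm{sgn}(t/\xi)/4}$ depends on $\mathrm{sgn}(\xi)$, so it is not a single constant, but since your dyadic decomposition already treats the half-lines $\xi>0$ and $\xi<0$ separately this is indeed harmless, exactly as you assert.
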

\begin{proof} 
First we prove for any $N\in 2^{\mathbb{Z}}$, we have
\begin{equation}\label{strich1}
   \|W(t)P_N\phi\|_{L^{q}_{T}L^{r}_{xy}}\lesssim N^{\beta(q,r)}\|\phi\|_{L^{2}_{xy}} 
\end{equation}

By $TT^*$-method, \eqref{strich1} is equivalent to 
\begin{equation}\label{strich2}
    \norm{\int W(t-s)P_Nfds}_{L^{q}_{T}L^{r}_{xy}}\lesssim N^{2\beta(q,r)}\|f\|_{L^{q'}_{T}L^{r'}_{xy}}.
\end{equation}
We define $W(t)P_N\phi=G(\cdot,\cdot,t)\star \phi,$ where $\star$ denotes the convolution with respect to the spatial variables and $G$ is defined by 
$$G(x,y,t)=\int_{\mathbb{R}^2}e^{i(x\xi+y\mu)}e^{it(\xi^5+\frac{\mu^2}{\xi}})\psi(\frac{\xi}{N})d\xi d\mu.$$
Integrating $\mu,$ we get $$G(x,y,t)=\int_{\mathbb{R}}e^{it\xi^5}|t|^{-\frac{1}{2}}|\xi|^{\frac{1}{2}}\psi_N(\xi) e^{it(x+\frac{y^2}{t})},$$
hence 
$$\|G\|_{L^{\infty}_{xy}}=\sup_{x,y\in \mathbb{R}}|G(x,y,t)|=|t|^{-\frac{1}{2}}\sup_{x\in \mathbb{R}}\Big|\int_{\mathbb{R}}|\xi|^{\frac{1}{2}}e^{it\xi^5}e^{it\xi}\psi\Big(\frac{\xi}{N}\Big)\Big|$$
By \cite{GuoPengWang}, for $\theta \in [0,\frac{1}{2}]$ we have that 
$$\sup_{x\in \mathbb{R}}\Big|e^{it\xi^5}e^{ix\xi}\psi\Big(\frac{\xi}{N}\Big)d\xi\Big|\lesssim |t|^{-\theta}N^{1-5\theta}.$$
Thus, using $|\xi|\sim N$, we obtain $\|G(\cdot,\cdot,t)\|_{L^{\infty}_{xy}}\lesssim |t|^{-\frac{1}{2}-\theta}N^{\frac{3}{2}-5\theta}$ and therefore by Young's inequality, 
$$\|G\star \phi\|_{L^{\infty}_{xy}}\lesssim |t|^{-\frac{1}{2}-\theta}N^{\frac{3}{2}-5\theta}\|\phi\|_{L^{1}_{xy}}.$$
Also, using the semigroup properties of $W(t)$, we get 
$$\|G\star \phi\|_{L^{2}_{xy}}=\|W(t)P_N\phi\|_{L^{2}_{xy}}\lesssim \|\phi\|_{L^{2}_{xy}}.$$
Using interpolation, $$\|W(t)P_N\phi\|_{L^{r}_{xy}}\lesssim |t|^{-(\frac{1}{2}+\theta)(1-\frac{2}{r})}N^{(\frac{3}{2}-5\theta)(1-\frac{2}{r})}\|\phi\|_{L^{r'}_{xy}}.$$ 
By Hardy-Sobolev inequality, we obtain 
$$\norm{\int_{\mathbb{R}^2}W(t-s)P_Nf}_{L^{q}_{T}L^{r}_{xy}}\lesssim \norm{\int\|W(t-s)P_Nf\|_{L^{r}_{xy}}ds}_{L^{q}_{T}}$$
$$\lesssim \norm{\int |t-s|^{(-\frac{1}{2}-\theta)(1-\frac{2}{r})}N^{(\frac{3}{2}-5\theta)(1-\frac{2}{r})}\|f\|_{L^{r'}_{xy}}ds}_{L^{q}_{T}}\lesssim N^{(\frac{3}{2}-5\theta)}\|f\|_{L^{r'}_{xy}},$$
for $\theta=\frac{\frac{2}{q}+\frac{1}{r}-\frac{1}{2}}{1-\frac{2}{r}},$ which implies $(\frac{3}{2}-5\theta)(1-\frac{2}{r})=8(\frac{1}{2}-\frac{1}{r}-\frac{1}{q})-\frac{2}{q}.$ Since $\theta\in [0,\frac{1}{2}],$ we need $$\frac{1}{2}-\frac{1}{q}-\frac{1}{r}\in \Big[0,\frac{1}{q}\Big].$$
We obtain 
$$\norm{\int_{\mathbb{R}^2}W(t-s)P_Nf\|_{L^{q}_{T}L^{r}_{xy}}}\lesssim N^{8(\frac{1}{2}-\frac{1}{q}-\frac{1}{r})-\frac{2}{q}}\|f\|_{L^{q'}_{T}L^{r'}_{xy}},$$
hence 
$$\|W(t)P_Nf\|_{L^{q}_{T}L^{r}_{xy}}\lesssim N^{4(\frac{1}{2}-\frac{5}{4q}-\frac{1}{r})}\|f\|_{L^{2}_{xy}}.$$ 
Since, $\frac{1}{2}=\frac{5}{4q}+\frac{1}{r}$, we conclude that 
$$\|W(t)P_Nf\|_{L^{q}_{T}L^{r}_{xy}}\lesssim\|f\|_{L^{2}_{xy}}.$$
\end{proof}

We adapt now Lemma $5.2$ from \cite{KenigZiesler}. 

\begin{lemma} 
    For all $\varepsilon>0,T>0$ there is a constant $C(\varepsilon)$ such that 
    $$\|W(t)\phi\|_{L^{2}_{T}L^{\infty}_{xy}}\leq C_{\varepsilon}T^{\varepsilon+\frac{1}{10}}(\|\phi\|_{L^{2}_{xy}}+\|D_{x}^{3\varepsilon}\phi\|_{L^{2}_{xy}}+\|D_{y}^{3\varepsilon}\phi\|_{L^{2}_{xy}}).$$
\end{lemma}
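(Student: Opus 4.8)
The plan is to reduce to frequency–localized pieces and play the subcritical Strichartz estimate of the previous lemma against Bernstein's inequality, paying only an arbitrarily small derivative loss to reach $L^\infty_{xy}$. Besides the $x$–projection $P_N$ I would introduce a dyadic $y$–frequency projection $Q_M$, given by $\widehat{Q_M u}(\xi,\mu)=\Phi_M(\mu)\hat u(\xi,\mu)$, and decompose $W(t)\phi=\sum_{N,M\in 2^{\mathbb{Z}}}W(t)P_NQ_M\phi$. Since $W(t)$ is a Fourier multiplier, each block $W(t)P_NQ_M\phi$ keeps $x$–frequency $\sim N$ and $y$–frequency $\sim M$, so Bernstein's inequality gives
$$\|W(t)P_NQ_M\phi\|_{L^\infty_{xy}}\lesssim (NM)^{1/r}\|W(t)P_NQ_M\phi\|_{L^r_{xy}}$$
for any finite $r$.

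Next I would choose the admissible pair dictated by the target time exponent: fix $q$ slightly above $\tfrac52$ with $\tfrac12-\tfrac1q=\tfrac1{10}+\varepsilon$, so the constraint $\tfrac5{4q}+\tfrac1r=\tfrac12$ of Lemma \ref{StrichartzEstimates} forces $\tfrac1r=\tfrac{5\varepsilon}{4}$, a small positive number. Applying Hölder in time on $[0,T]$ and then the Strichartz estimate (whose constant is uniform in $N$, $M$ and $T$) yields, for each block,
$$\|W(t)P_NQ_M\phi\|_{L^2_TL^\infty_{xy}}\lesssim T^{\frac12-\frac1q}(NM)^{1/r}\|W(t)P_NQ_M\phi\|_{L^q_TL^r_{xy}}\lesssim T^{\frac1{10}+\varepsilon}(NM)^{\frac{5\varepsilon}{4}}\|P_NQ_M\phi\|_{L^2_{xy}}.$$

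It then remains to sum over the dyadic parameters. In the high–frequency region $N,M\gtrsim 1$ I would split the weight as $(NM)^{5\varepsilon/4}=(NM)^{-\varepsilon/4}(NM)^{3\varepsilon/2}$ and apply Cauchy–Schwarz in $(N,M)$: the factor $\sum_{N,M\gtrsim 1}(NM)^{-\varepsilon/2}$ converges at the high end, while $(NM)^{3\varepsilon}\lesssim N^{6\varepsilon}+M^{6\varepsilon}$ together with Plancherel bounds the remaining sum by $\|D_x^{3\varepsilon}\phi\|_{L^2}+\|D_y^{3\varepsilon}\phi\|_{L^2}$, producing exactly the two derivative terms. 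The low–frequency region $N\lesssim 1$ (and symmetrically $M\lesssim 1$) is easier, since there $(NM)^{5\varepsilon/4}$ is a gain, the dyadic sum converges at the low end, and Cauchy–Schwarz contributes only $\|\phi\|_{L^2}$. Adding the contributions of the three regions gives the stated right–hand side $\|\phi\|_{L^2}+\|D_x^{3\varepsilon}\phi\|_{L^2}+\|D_y^{3\varepsilon}\phi\|_{L^2}$.

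The main obstacle is precisely this dyadic summation. A single power weight cannot converge at both frequency ends of $2^{\mathbb{Z}}$, so the high– and low–frequency ranges must be handled with opposite weight splittings, and the exponent budget must be arranged so that the convergence–forcing Bernstein loss $(NM)^{5\varepsilon/4}$ is strictly dominated by the allotted $(NM)^{3\varepsilon/2}$; this is what pins down the factor $3$ in $D^{3\varepsilon}$ and ties it to the $T^{\varepsilon+1/10}$ prefactor. A secondary point is that the small–$x$–frequency blocks, where the semigroup phase $\mu^2/\xi$ is singular, must still be controlled, but Lemma \ref{StrichartzEstimates} already holds uniformly as $N\to 0$ and the Bernstein factor is favorable there, so no extra argument beyond the summation is required.
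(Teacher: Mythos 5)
Your proof is correct in substance and shares its skeleton with the paper's argument: both fix the pair $(q,r)$ with $\tfrac12-\tfrac1q=\tfrac1{10}+\varepsilon$ (hence $\tfrac1r=\tfrac{5\varepsilon}{4}$ by the admissibility relation $\tfrac{5}{4q}+\tfrac1r=\tfrac12$), apply H\"older in time to produce the factor $T^{\varepsilon+\frac1{10}}$, and then invoke Lemma \ref{StrichartzEstimates}. Where you diverge is the passage from $L^r_{xy}$ to $L^\infty_{xy}$: the paper simply cites the anisotropic embedding of Lemma 3 in \cite{KenigZiesler2}, $\|f\|_{L^\infty}\lesssim \|f\|_{L^{p_\delta}}+\|D_x^\delta f\|_{L^{p_\delta}}+\|D_y^\delta f\|_{L^{p_\delta}}$ for $p_\delta>2/\delta$ (applied with $\delta=3\varepsilon$, $p_\varepsilon=\tfrac{4}{5\varepsilon}$, commuting the derivatives through $W(t)$), whereas you rederive this embedding from scratch via a two-parameter Littlewood--Paley decomposition, Bernstein on each block with loss $(NM)^{1/r}=(NM)^{5\varepsilon/4}$, and a Cauchy--Schwarz summation. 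The exponent budgets agree exactly: your requirement that the Bernstein loss $(NM)^{5\varepsilon/4}$ be strictly dominated by $3\varepsilon$ derivatives in each variable is the same inequality $\tfrac{5\varepsilon}{4}<\tfrac{3\varepsilon}{2}$, i.e.\ $p_\varepsilon>\tfrac{2}{3\varepsilon}$, that the paper checks. What your route buys is self-containedness and transparency about where the $3\varepsilon$ comes from; what the paper's route buys is brevity. (Both arguments implicitly need $r=\tfrac{4}{5\varepsilon}\geq 2$, i.e.\ $\varepsilon$ not too large, which is harmless.)

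One bookkeeping slip: your region decomposition into ``high--high'' plus ``$N\lesssim 1$'' plus ``$M\lesssim 1$'' misdescribes the mixed regimes. In the region $N\lesssim 1\lesssim M$ the factor $(NM)^{5\varepsilon/4}$ is a gain only in $N$; in $M$ you must still split $M^{5\varepsilon/4}=M^{-\varepsilon/4}M^{3\varepsilon/2}$ and pay $\|D_y^{3\varepsilon}\phi\|_{L^2}$, not merely $\|\phi\|_{L^2}$ as you assert (symmetrically, $M\lesssim 1\lesssim N$ costs $\|D_x^{3\varepsilon}\phi\|_{L^2}$); only the region $N,M\lesssim 1$ contributes just $\|\phi\|_{L^2}$. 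Since both derivative terms already appear on the right-hand side of the lemma, this is a trivially repairable misstatement rather than a gap, and the proof goes through as designed.
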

\begin{proof}
    We begin by recalling Lemma 3 of \cite{KenigZiesler2}, which says that 
    \begin{equation}\label{multiplierestimate}
      \|f\|_{L^{\infty}({\mathbb{R}^2})}\leq C\{\|f\|_{L^{\delta}({\mathbb{R}^2})}+\|D_{x}^{\delta}f\|_{L^{\delta}({\mathbb{R}^2})}+\|D_{y}^{\delta}f\|_{L^{\delta}({\mathbb{R}^2})}\},  
    \end{equation}
     for $p_{\delta}>\frac{2}{\delta}.$
    By H\"{o}lder's inequality in $t$, \eqref{multiplierestimate} and Lemma \ref{StrichartzEstimates} we obtain 
    $$\|W(t)\phi\|_{L^{2}_{T}L^{\infty}_{xy}}\leq C_{\varepsilon}T^{\varepsilon+\frac{1}{10}}\|W(t)\phi\|_{L^{q_{\varepsilon}}_{T}L^{\infty}_{xy}}$$
    $$\leq C_{\varepsilon}T^{\varepsilon+\frac{1}{10}}\{\|W(t)\phi\|_{L^{q_{\varepsilon}}_{T}L^{p_{\varepsilon}}_{xy}}+\|D_{x}^{3\varepsilon}W(t)\phi\|_{L^{q_{\varepsilon}}_{T}L^{p_{\varepsilon}}_{xy}}+\|D_{y}^{3\varepsilon}W(t)\phi\|_{L^{q_{\varepsilon}}_{T}L^{p_{\varepsilon}}_{xy}}\}$$
    $$\leq C_{\varepsilon}T^{\varepsilon+\frac{1}{10}}(\|\phi\|_{L^{2}_{xy}}+\|D_{x}^{3\varepsilon}\phi\|_{L^{2}_{xy}}+\|D_{x}^{3\varepsilon}\phi\|_{L^{2}_{xy}})$$
    for $\frac{1}{q_{\varepsilon}}=\frac{1}{2}-\varepsilon-\frac{1}{10}$ and $p_{\varepsilon}=\frac{4}{5\varepsilon}>\frac{2}{3\varepsilon}.$
\end{proof}

We now state the a priori estimates on $w.$ 
\begin{lemma} \label{aprioriestimate}
    Let $T\in (0,1), \varepsilon>0.$ Assume that $w \in C([0,T]:H^{3}_{-1}(\mathbb{R}^2))$ is a solution of the linear equation 
    $$\partial_t w-\partial_{x}^{5}w-\partial_{x}^{-1}\partial_{y}^{2}w=F.$$
    Then for $\theta \in (0,\frac{5}{4}),$ 
    $$\|\partial_x w\|_{L^{2}_{T}L^{\infty}_{xy}}\leq C_{\varepsilon}T^{\varepsilon+\frac{1}{10}}\Big[\sup_{0<t<T}\{\|\langle D_x\rangle^{\frac{3}{2}+4\varepsilon} w\|_{L^{2}_{xy}}+\|\langle D_x\rangle^{\frac{3}{2}+\varepsilon} D_{y}^{3\varepsilon} w\|_{L^{2}_{xy}}\}$$
    $$+T^{1-\frac{4\theta}{5}}\{\|\langle D_x\rangle^{\frac{1}{2}+4\varepsilon}F\|_{L^{\frac{5}{4\theta}}_{T}L^{2}_{xy}}+\|\langle D_x\rangle^{\frac{1}{2}+\varepsilon}D_{y}^{3\varepsilon}F\|_{L^{\frac{5}{4\theta}}_{T}L^{2}_{xy}}\}\Big].$$
\end{lemma}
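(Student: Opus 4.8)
The plan is to run the Duhamel representation for the linear evolution,
$$w(t)=W(t)w(0)+\int_{0}^{t}W(t-s)F(s)\,ds,$$
and, since $\partial_x$ commutes with the Fourier multiplier $W(t)$, to estimate the homogeneous and the forced contribution to $\|\partial_x w\|_{L^{2}_{T}L^{\infty}_{xy}}$ separately. For the homogeneous part I would feed the datum $\partial_x w(0)$ into the preceding $L^{2}_{T}L^{\infty}_{xy}$ Strichartz estimate, obtaining
$$\|\partial_x W(t)w(0)\|_{L^{2}_{T}L^{\infty}_{xy}}\lesssim_{\varepsilon}T^{\varepsilon+\frac{1}{10}}\big(\|D_x w(0)\|_{L^{2}_{xy}}+\|D_x^{1+3\varepsilon}w(0)\|_{L^{2}_{xy}}+\|D_{y}^{3\varepsilon}D_x w(0)\|_{L^{2}_{xy}}\big).$$
Because $1+3\varepsilon<\tfrac{3}{2}+4\varepsilon$ and $w(0)$ is the time-zero trace of $w$, every term here is dominated, with room to spare, by the supremum-in-time norms $\sup_{t}\{\|\langle D_x\rangle^{3/2+4\varepsilon}w\|_{L^{2}_{xy}}+\|\langle D_x\rangle^{3/2+\varepsilon}D_{y}^{3\varepsilon}w\|_{L^{2}_{xy}}\}$ appearing on the right. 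This piece is routine; all the difficulty is in the forced term.

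The essential point for the forced term is that a \emph{full} $x$-derivative must be traded for a \emph{half} derivative. Indeed, a crude application of Minkowski's inequality together with the same homogeneous estimate gives
$$\Big\|\partial_x\!\int_{0}^{t}W(t-s)F(s)\,ds\Big\|_{L^{2}_{T}L^{\infty}_{xy}}\lesssim_{\varepsilon}T^{\varepsilon+\frac{1}{10}}\big\|\langle D_x\rangle^{1+3\varepsilon}F\big\|_{L^{1}_{T}L^{2}_{xy}},$$
and Hölder in time, $\|\cdot\|_{L^{1}_{T}}\le T^{1-\frac{4\theta}{5}}\|\cdot\|_{L^{5/4\theta}_{T}}$, already reproduces exactly the claimed weights $T^{\varepsilon+\frac{1}{10}}T^{1-\frac{4\theta}{5}}$ and the $L^{5/4\theta}_{T}L^{2}_{xy}$ structure on $F$ — but with $\langle D_x\rangle^{1}$ in place of the asserted $\langle D_x\rangle^{1/2}$. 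Thus the entire content of the lemma is the recovery of one half derivative, which cannot come from Minkowski and must instead be extracted from the smoothing in the retarded integral. To capture it I would decompose $F=\sum_{N}P_N F$ dyadically in the $x$-frequency and run, on each piece, the refined frequency-localized dispersive bound from the proof of Lemma \ref{StrichartzEstimates}, which after inserting $\partial_x$ reads $\|\partial_x W(t)P_N\phi\|_{L^{\infty}_{xy}}\lesssim |t|^{-\frac{1}{2}-\theta}N^{\frac{5}{2}-5\theta}\|\phi\|_{L^{1}_{xy}}$, interpolated against the $L^{2}_{xy}$ isometry bound $\|\partial_x W(t)P_N\phi\|_{L^{2}_{xy}}\lesssim N\|\phi\|_{L^{2}_{xy}}$. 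Fractional integration in time against the resulting kernel $|t-s|^{-(\frac{1}{2}+\theta)(1-2/r)}$, legitimate precisely for $\theta$ in the stated range, followed by Hölder on the finite interval $[0,T]$, yields the $L^{5/4\theta}_{T}$ norm and the factor $T^{1-\frac{4\theta}{5}}$; the upgrade of the spatial exponent from $L^{r}_{xy}$ to $L^{\infty}_{xy}$ is then supplied by the anisotropic Sobolev multiplier estimate \eqref{multiplierestimate} with $\delta=3\varepsilon$, which is the source of the $D_x^{3\varepsilon}$ and $D_{y}^{3\varepsilon}$ losses. Summation over the dyadic scales $N$ by Cauchy--Schwarz assembles the accumulated powers of $N$ into $\langle D_x\rangle^{1/2+4\varepsilon}$ and $\langle D_x\rangle^{1/2+\varepsilon}D_{y}^{3\varepsilon}$; the decisive fact is that the exponent $N^{\frac{5}{2}-5\theta}$ is exactly what converts the naive $\langle D_x\rangle^{1}$ into $\langle D_x\rangle^{1/2}$.

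The main obstacle, as this outline makes clear, is the simultaneous calibration of $\theta\in(0,\tfrac{5}{4})$ and the auxiliary spatial exponent $r$ (equivalently the $p_\varepsilon$ of the multiplier lemma, constrained by $p_\delta>\tfrac{2}{\delta}$) so that three competing requirements hold at once: the retarded kernel must be Hardy--Littlewood--Sobolev integrable with time-output exponent $2$ and time-input exponent $\tfrac{5}{4\theta}$; the Sobolev passage to $L^{\infty}_{xy}$ must cost only the harmless $3\varepsilon$ derivatives; and the net powers of $N$ must be square-summable so that the $N$-sum closes into an $L^{2}_{xy}$ norm carrying exactly $\langle D_x\rangle^{1/2}$. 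The genuinely delicate step is reconciling the $L^{2}_{xy}$ norm demanded on $F$ with the $L^{\infty}_{xy}$ output and only half a derivative to spend — this is precisely what forbids the Minkowski route and forces the retarded smoothing argument — together with tracking the anisotropic $D_{y}^{3\varepsilon}$ derivative through the Fresnel integration in $\mu$, where the $y$-dispersion producing $|t|^{-1/2}$ and the fifth-order $x$-dispersion producing $|t|^{-\theta}$ must be apportioned in a way compatible with the endpoint constraint $\tfrac{1}{2}-\tfrac{1}{q}-\tfrac{1}{r}\in[0,\tfrac{1}{q}]$ inherited from Lemma \ref{StrichartzEstimates}.
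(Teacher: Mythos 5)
Your treatment of the homogeneous part is fine, and your diagnosis of the crude bound is exactly right: Minkowski plus the free $L^{2}_{T}L^{\infty}_{xy}$ estimate costs $\langle D_x\rangle^{1+3\varepsilon}$ on $F$ in $L^{1}_{T}L^{2}_{xy}$, so the lemma's content is moving from one derivative to half a derivative on $F$. But the mechanism you propose for that recovery cannot work. The $TT^{*}$/Hardy--Littlewood--Sobolev machinery you invoke yields retarded estimates whose \emph{input} is measured in dual Lebesgue pairs $L^{q'}_{T}L^{r'}_{xy}$ with $r'<2$; interpolating $\|\partial_x W(t)P_N\phi\|_{L^{\infty}_{xy}}\lesssim |t|^{-\frac12-\theta}N^{\frac52-5\theta}\|\phi\|_{L^{1}_{xy}}$ against the $L^{2}$ bound gives $L^{r'}_{xy}\to L^{r}_{xy}$ mappings, and the passage to $L^{\infty}_{xy}$ via \eqref{multiplierestimate} forces $r=p_{\varepsilon}\sim\frac{4}{5\varepsilon}$, hence $r'$ near $1$ --- nowhere can you arrange the input to be $L^{2}_{xy}$, which is what the lemma demands on $F$. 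You flag this tension yourself but offer no resolution, and there is none: testing your dyadic claim with resonant forcing $F(s)=W(s)P_N\phi$, for which $\int_{0}^{t}W(t-s)F(s)\,ds=tW(t)\phi$, it would assert $\|t\,\partial_x W(t)P_N\phi\|_{L^{2}_{T}L^{\infty}_{xy}}\lesssim T^{\frac{11}{10}+}N^{\frac12+}\|\phi\|_{L^{2}_{xy}}$, a \emph{free}-evolution bound costing only half a derivative --- strictly stronger than anything Lemma \ref{StrichartzEstimates} provides, and strong enough to run the entire local theory at $s>1$. A telltale symptom is that your argument never spends the $\frac32+4\varepsilon$ derivatives the lemma allocates to $w$ (you bound the homogeneous part ``with room to spare''); a statement calibrated at $\frac32$ on $w$ and $\frac12$ on $F$ signals a \emph{trade} between the two terms, not a pure gain on $F$.

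The missing idea --- the one the paper imports from Lemma 5.3 of \cite{KenigZiesler} (the Koch--Tzvetkov device, used also in \cite{Kenig}) and implements explicitly in its periodic analogue, Proposition \ref{linearestimate5RT}, where $[-T,T]$ is cut into $2^{2j}$ subintervals --- is a \emph{frequency-dependent partition of time}, not retarded smoothing. After Littlewood--Paley localization at $x$-frequency $N$, split $[0,T]$ into $\sim TN^{5/4}$ intervals $I_j$ of length $\delta\sim N^{-5/4}$ and apply Duhamel on each $I_j$ from its left endpoint $t_j$. The free pieces $W(t-t_j)P_Nw(t_j)$, estimated by the $L^{2}L^{\infty}$ Strichartz bound and square-summed over $j$, cost $(T/\delta)^{1/2}\,\delta^{\varepsilon+\frac1{10}}N^{1+3\varepsilon}\sup_{t}\|P_Nw\|_{L^{2}_{xy}}\sim T^{1/2}N^{\frac32+4\varepsilon}\sup_{t}\|P_Nw\|_{L^{2}_{xy}}$ --- this is precisely where the extra half derivative lands on $w$ --- while each local Duhamel term integrates $F$ only over the short interval $I_j$, so H\"older in time gains $\delta^{1-\frac{4\theta}{5}}$, i.e.\ negative powers of $N$, reducing the cost on $F$ from $N^{1+3\varepsilon}$ to below $N^{\frac12+4\varepsilon}$ and producing the stated $L^{\frac{5}{4\theta}}_{T}L^{2}_{xy}$ norm together with the factor $T^{1-\frac{4\theta}{5}}$ (the constraint $\theta<\frac54$ is what keeps this H\"older exponent positive). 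In short, the half derivative is traded from $F$ onto $w$ by time slicing at scale $N^{-5/4}$; your single-interval Duhamel with HLS in time cannot reproduce it.
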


The proof is similar to Lemma 5.3 in \cite{KenigZiesler}, hence we omit the details.

\begin{corollary} \label{boundforL2T}
    Let $T,w$ be as in Lemma \ref{aprioriestimate}. Let $F=-w^2\partial_xw.$ Then, for all $\varepsilon>0, \theta \in (0,\frac{5}{4}), s>\frac{\frac{3}{2}+4\varepsilon}{1-3\varepsilon},$ we have
    $$\|\partial_xw\|_{L^{2}_{T}L^{\infty}_{xy}}\lesssim C_{\varepsilon}T^{\varepsilon+\frac{1}{10}}\sup_{0<t<T}\|w\|_{E^s}\Big[1+T^{1-\frac{4\theta}{5}}(\|w\|_{L^{\frac{5}{2\theta}}_{T}L^{\infty}_{xy}}+\|D_{y}^{3\epsilon}w\|_{L^{\frac{5}{2\theta}}_{T}L^{\infty}_{xy}})^2\Big].$$
\end{corollary}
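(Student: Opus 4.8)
The plan is to substitute the nonlinearity $F=-w^2\partial_x w=-\tfrac13\partial_x(w^3)$ into Lemma~\ref{aprioriestimate} and bound the two blocks on its right-hand side separately: the homogeneous block (the supremum over $t$) should reproduce the constant $1$ inside the bracket, while the inhomogeneous block (the $L^{5/(4\theta)}_T L^2_{xy}$ norms of $F$) should produce the quadratic $L^\infty_{xy}$ factors.

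First I would handle the homogeneous block. The term $\|\langle D_x\rangle^{3/2+4\varepsilon}w\|_{L^2_{xy}}$ is $\lesssim\|w\|_{E^s}$ as soon as $s\ge\tfrac32+4\varepsilon$, which is implied by the hypothesis. The mixed term $\|\langle D_x\rangle^{3/2+\varepsilon}D_y^{3\varepsilon}w\|_{L^2_{xy}}$ is the crux: I would trade the $y$-derivative against the anisotropic weight built into $E^s$. Writing $|\mu|^{3\varepsilon}=|\xi|^{3\varepsilon}\,|\mu/\xi|^{3\varepsilon}$ and applying Young's inequality with conjugate exponents $1/(3\varepsilon)$ and $1/(1-3\varepsilon)$ gives, on the Fourier side,
\[
\langle\xi\rangle^{3/2+\varepsilon}\,|\xi|^{3\varepsilon}\,\Big|\tfrac{\mu}{\xi}\Big|^{3\varepsilon}
\lesssim \langle\xi\rangle^{\frac{3/2+4\varepsilon}{1-3\varepsilon}}+\Big|\tfrac{\mu}{\xi}\Big|.
\]
The first symbol is dominated by $\langle\xi\rangle^{s}$ exactly when $s\ge\frac{3/2+4\varepsilon}{1-3\varepsilon}$ --- which is the hypothesis --- and the second is the symbol of $\partial_x^{-1}\partial_y$, so Plancherel yields $\|\langle D_x\rangle^{3/2+\varepsilon}D_y^{3\varepsilon}w\|_{L^2_{xy}}\lesssim\|\langle D_x\rangle^{s}w\|_{L^2_{xy}}+\|\partial_x^{-1}\partial_y w\|_{L^2_{xy}}\lesssim\|w\|_{E^s}$. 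Taking the supremum in $t$ shows the homogeneous block is $\lesssim\sup_{0<t<T}\|w\|_{E^s}$.

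Next I would estimate the forcing norms. Since $\langle\xi\rangle^{1/2+4\varepsilon}\xi$ and $\langle\xi\rangle^{1/2+\varepsilon}\xi$ are bounded by $\langle\xi\rangle^{3/2+4\varepsilon}$ and $\langle\xi\rangle^{3/2+\varepsilon}$ respectively, I may move the inner $\partial_x$ into the order of the $x$-derivative and reduce to estimating $\|\langle D_x\rangle^{3/2+4\varepsilon}(w^3)\|_{L^2_{xy}}$ and $\|\langle D_x\rangle^{3/2+\varepsilon}D_y^{3\varepsilon}(w^3)\|_{L^2_{xy}}$. For the first, the Kato--Ponce fractional Leibniz rule for a triple product places the full $x$-derivative on one copy of $w$ and leaves the other two in $L^\infty_{xy}$, giving $\lesssim\|\langle D_x\rangle^{3/2+4\varepsilon}w\|_{L^2_{xy}}\|w\|_{L^\infty_{xy}}^2\lesssim\|w\|_{E^s}\|w\|_{L^\infty_{xy}}^2$. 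For the mixed norm I would apply the anisotropic fractional Leibniz rule: the $y$-derivative $D_y^{3\varepsilon}$ lands on a single factor, either coinciding with the factor carrying the $x$-derivative --- which is then controlled by the mixed bound $\|\langle D_x\rangle^{3/2+\varepsilon}D_y^{3\varepsilon}w\|_{L^2_{xy}}\lesssim\|w\|_{E^s}$ established above, the other two factors staying in $L^\infty_{xy}$ --- or landing on a distinct factor, contributing a $\|D_y^{3\varepsilon}w\|_{L^\infty_{xy}}$ while the high-$x$-derivative factor is again $\lesssim\|w\|_{E^s}$. Either way the result is majorized by $\|w\|_{E^s}\big(\|w\|_{L^\infty_{xy}}+\|D_y^{3\varepsilon}w\|_{L^\infty_{xy}}\big)^2$ at fixed $t$.

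Finally I would integrate in time, pulling $\sup_{0<t<T}\|w\|_{E^s}$ out of the $L^{5/(4\theta)}_T$ norm and using $\|g^2\|_{L^{5/(4\theta)}_T}=\|g\|_{L^{5/(2\theta)}_T}^2$, so that both forcing norms become $\lesssim\sup_{0<t<T}\|w\|_{E^s}\big(\|w\|_{L^{5/(2\theta)}_T L^\infty_{xy}}+\|D_y^{3\varepsilon}w\|_{L^{5/(2\theta)}_T L^\infty_{xy}}\big)^2$. Substituting both blocks into Lemma~\ref{aprioriestimate} and factoring out $\sup_{0<t<T}\|w\|_{E^s}$ produces exactly the claimed inequality. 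I expect the main obstacle to be the bookkeeping in the anisotropic Leibniz step together with the $D_y^{3\varepsilon}\to E^s$ interpolation: one must verify that no single factor ever carries more than $\tfrac32+4\varepsilon$ $x$-derivatives alongside at most $3\varepsilon$ $y$-derivatives, since that is the only configuration the $E^s$ norm controls, and it is precisely this constraint that forces the threshold $s>\frac{3/2+4\varepsilon}{1-3\varepsilon}$.
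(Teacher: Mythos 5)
Your proof is correct and takes essentially the same route as the paper, which omits the argument and defers to Corollary 5.2 of Kenig--Ziesler: substitute $F=-\tfrac13\partial_x(w^3)$ into Lemma \ref{aprioriestimate}, trade $D_y^{3\varepsilon}$ against $\partial_x^{-1}\partial_y$ and $\langle D_x\rangle^{s}$ via Young's inequality on the symbol (which is exactly where the threshold $s>\frac{\frac{3}{2}+4\varepsilon}{1-3\varepsilon}$ enters), estimate the cubic nonlinearity by fractional Leibniz rules, and conclude with H\"older in time using $\|g^2\|_{L^{5/(4\theta)}_T}=\|g\|^2_{L^{5/(2\theta)}_T}$. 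The anisotropic Leibniz bookkeeping you flag as the main obstacle is genuine but handled by the same Littlewood--Paley decomposition in both variables that the paper carries out explicitly for the analogous terms in Lemma \ref{mainbound}, so your argument supplies precisely the details the paper leaves to the citation.
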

We proceed like in Corollary 5.2 in \cite{KenigZiesler}, therefore we omit the proof. 

\begin{lemma} \label{aprioriestimates2}
    Let $T,w$ be as in Lemma \ref{aprioriestimate}. Then 
    $$\|w\|_{L^{\frac{5}{2\theta}}_{T}L^{\frac{2}{1-\theta}}_{xy}}\leq C(\sup_{0<t<T}\|\langle D_x\rangle^{\frac{2\theta}{5}}w\|_{L^{2}_{xy}}+T^{\frac{1}{2}}\|\langle D_x\rangle^{-\frac{1}{2}}F\|_{L^{2}_{T}L^{2}_{xy}}).$$
\end{lemma}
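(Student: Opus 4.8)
The plan is to run the Duhamel formula for the linear flow against the Strichartz estimate of Lemma \ref{StrichartzEstimates}. Writing the solution as
$$w(t)=W(t)w(0)+\int_{0}^{t}W(t-s)F(s)\,ds,$$
I first note that the target exponent pair $(q,r)=\big(\tfrac{5}{2\theta},\tfrac{2}{1-\theta}\big)$ is \emph{exactly} admissible for Lemma \ref{StrichartzEstimates}: indeed $\tfrac{5}{4q}+\tfrac1r=\tfrac{\theta}{2}+\tfrac{1-\theta}{2}=\tfrac12$, with $2\le r<\infty$ precisely for $\theta\in(0,1)$, which is the relevant range here (for $\theta\ge 1$ the exponent $\tfrac{2}{1-\theta}$ is not even defined). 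So the whole lemma is a Duhamel estimate for this one admissible pair.

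For the homogeneous piece, Lemma \ref{StrichartzEstimates} gives immediately $\|W(t)w(0)\|_{L^{5/(2\theta)}_TL^{2/(1-\theta)}_{xy}}\le C\|w(0)\|_{L^2_{xy}}$, and since the Fourier multiplier $\langle D_x\rangle^{2\theta/5}\ge 1$ we bound $\|w(0)\|_{L^2_{xy}}\le\sup_{0<t<T}\|\langle D_x\rangle^{2\theta/5}w\|_{L^2_{xy}}$, recovering the first term of the right-hand side. This step is routine (the extra $x$-derivative is harmless slack).

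The real content is the Duhamel term. The naive bound — Minkowski in time followed by the homogeneous Strichartz estimate applied at each fixed $s$ — only gives $\big\|\int_0^tW(t-s)F\,ds\big\|_{L^q_TL^r}\lesssim\|F\|_{L^1_TL^2}\le T^{1/2}\|F\|_{L^2_TL^2}$, i.e.\ the same inequality but with $F$ rather than $\langle D_x\rangle^{-1/2}F$. Since $\langle D_x\rangle^{-1/2}F$ is a strictly weaker norm, the stated estimate requires a genuine half-derivative \emph{gain} from the fifth-order dispersion; this is exactly what makes the whole iteration close at low regularity, because in the applications $F$ carries a derivative (e.g.\ $F=-w^2\partial_x w$). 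To extract it I would dualize: pairing with $H\in L^{q'}_TL^{r'}_{xy}$, using that $W$ is unitary with $W(\tau)^\ast=W(-\tau)$, swapping the order of the $s,t$ integrations, and applying Cauchy–Schwarz in $(s,x,y)$ so as to split off $\langle D_x\rangle^{-1/2}F$, reduces the claim to the retarded smoothing estimate
$$\Big\|\langle D_x\rangle^{1/2}\int_{s}^{T}W(s-t)H(t)\,dt\Big\|_{L^2_sL^2_{xy}}\lesssim T^{1/2}\|H\|_{L^{q'}_TL^{r'}_{xy}}.$$
This is the local-smoothing estimate for the linear equation, proved in the same spirit as the smoothing lemmas of \cite{KenigZiesler} and \cite{KenigZiesler2} by Plancherel in $t$, the co-area formula in $\xi$, and the lower bound on the group velocity $\partial_\xi\omega=5\xi^4-\mu^2/\xi^2$.

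The hard part is precisely this inhomogeneous smoothing estimate. The gain of $\langle D_x\rangle^{1/2}$ originates from $|\partial_\xi\omega|^{1/2}$, but $\partial_\xi\omega=5\xi^4-\mu^2/\xi^2$ vanishes on the curve $\mu^2=5\xi^6$, where the flow does not smooth in $x$ at all; one must separate off a neighborhood of this degenerate set and estimate it by hand (using the transversal oscillation in $y$, or the smallness of its measure), while on the complement $|\partial_\xi\omega|$ is bounded away from zero and the stationary-phase/co-area bound produces the half-derivative. The genuine difficulty is handling that degeneracy while keeping the constant uniform in $T$; it is essential here that only the modest half-derivative is demanded, far below the full fifth-order smoothing, so that the degenerate contribution is affordable. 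The remaining $T^{1/2}$ factor is produced by a single Hölder step in the time variable, and the time-ordering in the retarded integral is dealt with directly inside the Plancherel computation, exactly as in the cited Kenig–Ziesler lemmas.
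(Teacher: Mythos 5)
Your admissibility check and your treatment of the homogeneous term are fine, but you have misidentified the mechanism behind the inhomogeneous gain, and the estimate you reduce the lemma to is provably false in part of the stated range of $\theta$. The paper's proof (following Lemma 5.4 of \cite{KenigZiesler}) does not extract the half-derivative from the Duhamel term alone by local smoothing. Instead it performs a Littlewood--Paley decomposition in $D_x$ and, for the piece $P_Nw$ at frequency $N\geq 1$, partitions $[0,T]$ into roughly $N$ subintervals $I_j=[t_j,t_{j+1})$ of length $T/N$; on each $I_j$ one writes Duhamel from the left endpoint $t_j$ and applies the homogeneous Strichartz bound of Lemma \ref{StrichartzEstimates} at the admissible pair. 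Cauchy--Schwarz in time on the short interval gives $\|P_NF\|_{L^{1}_{I_j}L^{2}_{xy}}\leq (T/N)^{1/2}\|P_NF\|_{L^{2}_{I_j}L^{2}_{xy}}$ --- this is the sole source of both the factor $T^{1/2}$ and the gain $N^{-1/2}$ --- while summing the $\sim N$ free evolutions $W(t-t_j)P_Nw(t_j)$ in $\ell^q$ costs $N^{1/q}=N^{2\theta/5}$ against $\sup_{0<t<T}\|P_Nw(t)\|_{L^{2}_{xy}}$. That is exactly why the first term on the right-hand side is $\sup_t\|\langle D_x\rangle^{2\theta/5}w\|_{L^2_{xy}}$ with a supremum in time rather than $\|w(0)\|_{L^2_{xy}}$: the derivative weight you dismissed as ``harmless slack'' is the price of the time slicing and is consumed at all the intermediate times $t_j$, not just at $t=0$.

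By discarding it, you are forced to prove $\big\|\int_0^tW(t-s)F\,ds\big\|_{L^{q}_{T}L^{r}_{xy}}\lesssim T^{1/2}\|\langle D_x\rangle^{-1/2}F\|_{L^{2}_{T}L^{2}_{xy}}$ (equivalently, by your duality computation, which is itself correct, the retarded smoothing bound you state). This fails for small $\theta$ in the lemma's range: take $F(s)=W(s)\psi_N$ on $[0,T]$ with $\widehat{\psi_N}=\mathbf{1}_{[N,N+1]\times[0,1]}(\xi,\mu)$, so that $\int_0^tW(t-s)F\,ds=t\,W(t)\psi_N$. The right-hand side is $\sim T N^{-1/2}$, while for $t\sim T$ the packet has dispersed in $x$ over a length $\sim TN^3$ (group-velocity spread $\sim N^3$ across the unit frequency band), so $\|W(t)\psi_N\|_{L^{r}_{xy}}\sim (TN^3)^{-\theta/2}$ and the ratio of the two sides is $\sim T^{\frac{1}{q}-\frac{\theta}{2}}N^{\frac{1}{2}-\frac{3\theta}{2}}\rightarrow\infty$ as $N\rightarrow\infty$ whenever $\theta<\frac13$. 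This is consistent with the obstruction you yourself flagged: on the curve $\mu^2=5\xi^6$ the $x$-group velocity $5\xi^4-\mu^2/\xi^2$ vanishes, so no gain of $\langle D_x\rangle^{1/2}$ uniform in $\mu$ is available from $|\partial_\xi\omega|^{1/2}$; the degenerate contribution is not ``affordable'' --- for small $\theta$ it destroys the estimate outright, and for $\theta$ near $\frac12$ your sketch leaves it unproven. The missing idea, then, is the frequency-dependent time localization, which is what the paper's phrase ``Littlewood--Paley decomposition as in the above-mentioned Lemma'' refers to, and which makes essential use of the $\langle D_x\rangle^{2\theta/5}$-weighted supremum term.
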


This linear estimate follows Lemma 5.4 in \cite{KenigZiesler}, by applying Lemma \ref{StrichartzEstimates} to Duhamel's formula, i.e. 
$$\|W(t)\phi\|_{L^{\frac{5}{2\theta}}_TL^{\frac{2}{1-\theta}}_{xy}}\leq \|\phi\|_{L^2_{xy}}.$$ We proceed with a Littlewood-Paley decomposition as in the above-mentioned Lemma and we finish the proof.

\begin{corollary} \label{littlewoodpaley}
    Let $w$ be as in Lemma \ref{aprioriestimate}. Let $\varepsilon>0.$ Then 
    $$\|w\|_{L^{\frac{5}{2\theta}}_{T}L^{\infty}_{xy}}\leq C_{\varepsilon}[\sup_{0<t<T}\|\langle D_x\rangle^{1-\theta+\frac{2\theta}{5}+\varepsilon}\langle D_{x}^{-1}D_{y}\rangle^{\frac{1-\theta}{2}+\varepsilon}w\|_{L^{2}_{xy}}+T^{\frac{1}{2}}\|\langle D_x\rangle^{\frac{1}{2}-\theta+\varepsilon}\langle D_{x}^{-1}D_{y}\rangle^{\frac{1-\theta}{2}+\varepsilon}F\|_{L^{2}_{T}L^{2}_{xy}}].$$
\end{corollary}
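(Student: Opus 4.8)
The plan is to deduce the $L^{\infty}_{xy}$ bound from the $L^{\frac{2}{1-\theta}}_{xy}$ bound of Lemma \ref{aprioriestimates2} by paying for a purely spatial anisotropic Sobolev embedding, and then to feed the sharpened data into Lemma \ref{aprioriestimates2} applied to a frequency-modified solution. Write $p=\frac{2}{1-\theta}$ (an admissible Lebesgue exponent since $\theta\in(0,1)$ forces $p\in[2,\infty)$) and set the Fourier multiplier
$$M=\langle D_x\rangle^{1-\theta+\varepsilon}\langle D_{x}^{-1}D_{y}\rangle^{\frac{1-\theta}{2}+\varepsilon}.$$
Because $M$ acts only in $(x,y)$, it commutes with $\partial_t-\partial_x^5-\partial_x^{-1}\partial_y^2$, so for sufficiently regular $w$ (the a priori setting) the function $Mw$ is again an admissible solution of the linear equation with forcing $MF$. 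It therefore suffices to prove, for each fixed $t$, the spatial embedding
$$\|g\|_{L^\infty_{xy}}\lesssim_{\varepsilon}\|Mg\|_{L^{p}_{xy}}\qquad(\ast);$$
indeed, taking the $L^{\frac{5}{2\theta}}_{T}$ norm of $(\ast)$ gives $\|w\|_{L^{\frac{5}{2\theta}}_{T}L^\infty_{xy}}\lesssim\|Mw\|_{L^{\frac{5}{2\theta}}_{T}L^{p}_{xy}}$, and applying Lemma \ref{aprioriestimates2} to $Mw$ reproduces exactly the two terms on the right-hand side, since $\langle D_x\rangle^{\frac{2\theta}{5}}M=\langle D_x\rangle^{1-\theta+\frac{2\theta}{5}+\varepsilon}\langle D_{x}^{-1}D_{y}\rangle^{\frac{1-\theta}{2}+\varepsilon}$ and $\langle D_x\rangle^{-\frac12}M=\langle D_x\rangle^{\frac12-\theta+\varepsilon}\langle D_{x}^{-1}D_{y}\rangle^{\frac{1-\theta}{2}+\varepsilon}$.

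To prove $(\ast)$ I would avoid \eqref{multiplierestimate}, whose isotropic $D_y^{\delta}$ forces the full power $\langle D_x^{-1}D_y\rangle^{1-\theta+\varepsilon}$ instead of the sharp half-power. Instead I decompose $g=\sum_N P_N g$ in the $x$-frequency and, on each block $|\xi|\sim N\gtrsim 1$, embed the two variables \emph{separately}: the one-dimensional Sobolev embedding $W^{\frac1p+\varepsilon,p}(\mathbb{R}_y)\hookrightarrow L^\infty_y$ costs $\langle D_y\rangle^{\frac1p+\varepsilon}$, and Banach-valued Bernstein in $x$ costs $N^{1/p}$, so that $\|P_Ng\|_{L^\infty_{xy}}\lesssim N^{1/p}\|\langle D_y\rangle^{\frac1p+\varepsilon}P_Ng\|_{L^{p}_{xy}}$. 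Since $|\xi|\sim N$ on the block, one has $\langle D_y\rangle^{\frac1p+\varepsilon}\sim N^{\frac1p+\varepsilon}\langle D_x^{-1}D_y\rangle^{\frac1p+\varepsilon}$ there; collecting $N^{1/p}\cdot N^{1/p+\varepsilon}=N^{1-\theta+\varepsilon}$ and recalling $\tfrac1p=\tfrac{1-\theta}{2}$ gives
$$\|P_Ng\|_{L^\infty_{xy}}\lesssim\|\langle D_x\rangle^{1-\theta+\varepsilon}\langle D_{x}^{-1}D_{y}\rangle^{\frac{1-\theta}{2}+\varepsilon}P_Ng\|_{L^{p}_{xy}}=\|P_N(Mg)\|_{L^{p}_{xy}}.$$
For the low block $N\lesssim 1$ Bernstein in $x$ has no loss and one dominates $\langle D_y\rangle^{\frac1p+\varepsilon}\lesssim\langle D_x^{-1}D_y\rangle^{\frac1p+\varepsilon}$ (valid because $|\mu/\xi|\gtrsim|\mu|$ there), so the same bound holds.

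Finally I sum in $N$. As $\sum_N\|P_N(Mg)\|_{L^{p}}$ is an $\ell^1$ sum of Littlewood--Paley pieces, I insert a harmless extra $\langle D_x\rangle^{\varepsilon'}$: writing $\sum_N N^{-\varepsilon'}\,N^{\varepsilon'}\|P_N(Mg)\|_{L^{p}}$ and using Cauchy--Schwarz together with $\big(\sum_N\|P_Nh\|_{L^{p}}^2\big)^{1/2}\le\big\|(\sum_N|P_Nh|^2)^{1/2}\big\|_{L^{p}}\lesssim\|h\|_{L^{p}}$ (Minkowski, valid since $p\ge2$, and the Littlewood--Paley square-function bound on $L^{p}$, $1<p<\infty$) yields $\sum_N\|P_N(Mg)\|_{L^{p}}\lesssim\|\langle D_x\rangle^{\varepsilon'}Mg\|_{L^{p}}$; absorbing $\varepsilon'$ into $\varepsilon$ closes $(\ast)$. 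The main obstacle is precisely this anisotropic bookkeeping: separating the $x$- and $y$-embeddings so that only the single $y$-power $\frac1p=\frac{1-\theta}{2}$ is spent (the crude \eqref{multiplierestimate} would double it), correctly trading $\langle D_y\rangle$ for $N\langle D_x^{-1}D_y\rangle$ on each dyadic block while keeping the singular operator $D_x^{-1}$ near $\xi=0$ under control, and closing the $\ell^1$-in-$N$ summation with the $\varepsilon$-loss.
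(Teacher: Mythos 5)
Your proposal is correct and shares the paper's overall skeleton: both reduce Corollary \ref{littlewoodpaley} to the fixed-time anisotropic embedding $\|g\|_{L^{\infty}_{xy}}\lesssim \|Mg\|_{L^{2/(1-\theta)}_{xy}}$ for $M=\langle D_x\rangle^{1-\theta+\varepsilon}\langle D_{x}^{-1}D_{y}\rangle^{\frac{1-\theta}{2}+\varepsilon}$ (the paper calls this operator $L$), and then conclude by feeding $Mw$, with forcing $MF$, into Lemma \ref{aprioriestimates2}, using exactly the multiplier identities you record. Where you genuinely differ is \emph{inside} the embedding. The paper decomposes dyadically in both frequencies, $\xi\approx 2^{k}$ and $\mu\approx 2^{l}$, applies the two-dimensional Bernstein bound $\|u_{k,l}\|_{L^{\infty}_{xy}}\lesssim (2^{k+l})^{\frac{1-\theta}{2}}\|u_{k,l}\|_{L^{2/(1-\theta)}_{xy}}$, and redistributes the exponent via $(k+l)\frac{1-\theta}{2}=k(1-\theta)+(l-k)\frac{1-\theta}{2}$ so that the cost matches the symbol of $L$ on each block (where $|\mu/\xi|\approx 2^{l-k}$), summing the double series geometrically off the $\varepsilon$-powers. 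You instead decompose only in the $x$-frequency $N$, replace the $\mu$-decomposition by the one-dimensional embedding $W^{\frac{1}{p}+\varepsilon,p}(\mathbb{R}_y)\hookrightarrow L^{\infty}_y$ together with vector-valued Bernstein in $x$, and close a single sum over $N$ by the $\varepsilon'$-weighted Cauchy--Schwarz and square-function argument (legitimate since $p=\frac{2}{1-\theta}\in[2,\infty)$). Your trade of $\langle D_y\rangle$ for $N\langle D_x^{-1}D_y\rangle$ on a block of $x$-frequency $\sim N$ is precisely the paper's exponent redistribution in disguise, so the analytic content is the same; what your route buys is a one-parameter summation and the avoidance of \eqref{multiplierestimate}'s wasteful isotropic $D_y$ power, at the price of a block-wise multiplier equivalence $\langle\mu\rangle^{\sigma}\lesssim N^{\sigma}\langle\mu/\xi\rangle^{\sigma}$ which should be justified as a Marcinkiewicz-type $L^{p}$ multiplier bound uniform in $N$, including near $\xi=0$ where $D_x^{-1}$ is singular --- though this is the same level of Littlewood--Paley folklore the paper itself invokes without proof. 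Two bookkeeping points you should make explicit: over the low blocks $N\lesssim 1$ the weighted Cauchy--Schwarz fails ($\sum_{N\leq 1}N^{-2\varepsilon'}$ diverges over $2^{\mathbb{Z}}$), so there you must retain the Bernstein gain $N^{1/p}$, which your argument does supply; and applying Lemma \ref{aprioriestimates2} to $Mw$ presupposes $Mw\in C([0,T]:H^{3}_{-1}(\mathbb{R}^2))$, which in this a priori setting is harmless by regularization since the resulting constants depend only on the displayed norms.
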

\begin{proof}
    First, we prove the following estimate:
    $$\|w\|_{L^{\infty}_{xy}}\leq C\|\langle D_x\rangle^{1-\theta+\varepsilon}\langle D_{x}^{-1}D_{y}\rangle^{\frac{1-\theta}{2}+\varepsilon}w\|_{L^{\frac{2}{1-\theta}}_{xy}}$$
    We denote $L=\langle D_x\rangle^{1-\theta+\varepsilon}\langle D_{x}^{-1}D_{y}\rangle^{\frac{1-\theta}{2}+\varepsilon}$ and let $u_{k,l}$ be the Littlewood-Paley projection of $u$ to frequencies $\xi\approx 2^k$ and $\mu\approx 2^l.$ If we define $m(\xi, \mu)=\langle \xi\rangle^{1-\theta+\varepsilon}\langle \frac{\mu}{\xi}\rangle^{\frac{1-\theta}{2}+\varepsilon},$ therefore we observe that 
    $Lu=\check{m\hat{u}}.$ By the Littlewood-Paley theory we know that 
    $$\|Lu_{k,l}\|_{L^{p}_{xy}}\approx 2^{\max\{0,k\}(1-\theta+\varepsilon)+\max\{0,l-k\}(\frac{1-\theta}{2}+\varepsilon)}\|u_{k,l}\|_{L^{p}_{xy}}.$$
    By Bernstein-Sobolev inequality, we also have that 
    $$\|w\|_{L^{\infty}_{xy}}\lesssim (2^{k+l})^{\frac{1-\theta}{2}}\|w\|_{L^{\frac{2}{1-\theta}}_{xy}}.$$
    Using that $(k+l)\frac{1-\theta}{2}=k(1-\theta)+(l-k)\frac{1-\theta}{2},$ we have 
    $$\|u_{k,l}\|_{L^{\infty}_{xy}}\leq 2^{\min(0,k)(1-\theta)-\varepsilon\max(0,k)+\min(0,l-k)\frac{1-\theta}{2}-\varepsilon\max(0,l-k)}\sup_{k,l}\|Lu_{k,l}\|_{L^{\frac{2}{1-\theta}}_{xy}},$$
    hence, by summing over $k$ and $l$, we get 
    $$\|u\|_{L^{\infty}_{xy}}\leq \sum_{k,l \in \mathbb{Z}}\|u_{k,l}\|_{L^{\infty}_{xy}}\leq \sup_{k,l}\|Lu_{k,l}\|_{L^{\frac{2}{1-\theta}}_{xy}}\leq \|Lu\|_{L^{\frac{2}{1-\theta}}_{xy}}$$
    and the estimate is proved. We conclude using this estimate combined with Lemma \ref{aprioriestimates2}.
    
\end{proof}

\begin{lemma}\label{mainbound}
    Let $T,w$ be as in Lemma \ref{aprioriestimate} and let $F=-w^2\partial_xw.$ Then, for all $\varepsilon>0, s>\frac{3}{2},$
   $$\|w\|_{L^{\frac{5}{2\theta}}L^{\infty}_{xy}}+\|D_{y}^{3\varepsilon}w\|_{L^{\frac{5}{2\theta}}_{T}L^{\infty}_{xy}}$$
    $$\leq C[\sup_{0<t<T}\|w\|_{E^s}+T^{1-\frac{4\theta}{5}}(\sup_{0<t<T}\|w\|_{E^s}(\|w\|_{L^{\frac{5}{2\theta}}_{T}L^{\infty}_{xy}}^{2}+\|D_{y}^{3\varepsilon}w\|_{L^{\frac{5}{2\theta}}_{T}L^{\infty}_{xy}}^{2})+\sup_{0<t<T}\|w\|_{E^s}^{3})].$$
\end{lemma}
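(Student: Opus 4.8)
The plan is to deduce Lemma \ref{mainbound} from Corollary \ref{littlewoodpaley} by applying that corollary twice: once to $w$ and once to $D_y^{3\varepsilon}w$. Since the Fourier multiplier $D_y^{3\varepsilon}$ commutes with the linear propagator $\partial_t-\partial_x^5-\partial_x^{-1}\partial_y^2$, the function $D_y^{3\varepsilon}w$ solves the same linear equation with forcing $D_y^{3\varepsilon}F$, so Corollary \ref{littlewoodpaley} applies to it with $F$ replaced by $D_y^{3\varepsilon}F$. Writing $\Lambda=\langle D_x\rangle^{1-\theta+\frac{2\theta}{5}+\varepsilon}\langle D_x^{-1}D_y\rangle^{\frac{1-\theta}{2}+\varepsilon}$ and $\Lambda'=\langle D_x\rangle^{\frac12-\theta+\varepsilon}\langle D_x^{-1}D_y\rangle^{\frac{1-\theta}{2}+\varepsilon}$, adding the two resulting estimates bounds the left-hand side by
$$C\Big[\sup_{0<t<T}\big(\|\Lambda w\|_{L^2_{xy}}+\|\Lambda D_y^{3\varepsilon}w\|_{L^2_{xy}}\big)+T^{\frac12}\big(\|\Lambda'F\|_{L^2_TL^2_{xy}}+\|\Lambda'D_y^{3\varepsilon}F\|_{L^2_TL^2_{xy}}\big)\Big].$$
It then remains to dominate the linear group by $\sup_t\|w\|_{E^s}$ and the nonlinear group by the cubic expression on the right of the lemma.

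For the linear group I would argue entirely on the Fourier side. The $E^s$ norm controls $\|\langle D_x\rangle^s w\|_{L^2}$ and $\|D_x^{-1}D_y w\|_{L^2}$, so it suffices to dominate the symbol of $\Lambda$ pointwise by $\langle\xi\rangle^s+|\mu/\xi|$. Using $\langle\mu/\xi\rangle^{b}\lesssim 1+|\mu/\xi|^{b}$ together with Young's inequality $\langle\xi\rangle^{a}|\mu/\xi|^{b}\lesssim |\mu/\xi|+\langle\xi\rangle^{a/(1-b)}$ (valid since the relevant $b<1$ for small $\varepsilon$), this reduces to the numerical condition $\tfrac{a}{1-b}\le s$ with $a=1-\theta+\tfrac{2\theta}{5}+\varepsilon$ and $b=\tfrac{1-\theta}{2}+\varepsilon$. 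The extra factor $D_y^{3\varepsilon}$ only perturbs $a,b$ by $3\varepsilon$ (write $|\mu|^{3\varepsilon}=|\xi|^{3\varepsilon}|\mu/\xi|^{3\varepsilon}$). One checks that $\tfrac{a}{1-b}\le s$ holds for every $s>\tfrac32$ once $\theta$ is taken in the admissible subinterval of $(0,\tfrac54)$ and $\varepsilon$ is small; this is precisely where the threshold $s>\tfrac32$ enters.

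The nonlinear group is the crux. I would first write $F=-w^2\partial_xw=-\tfrac13\partial_x(w^3)$, so that $\Lambda'F=-\tfrac13\langle D_x\rangle^{\frac12-\theta+\varepsilon}\partial_x\langle D_x^{-1}D_y\rangle^{\frac{1-\theta}{2}+\varepsilon}(w^3)$ carries $x$-derivatives of total order $\tfrac32-\theta+\varepsilon$. A fractional Leibniz (paraproduct) decomposition distributes these derivatives over the three equal factors of $w^3$: the high-low-low interactions place the top derivatives on one factor, estimated in $L^2_{xy}$ by $\sup_t\|w\|_{E^s}$ (using $\tfrac32-\theta+\varepsilon<s$ and the same Young-type symbol bound for the $\langle D_x^{-1}D_y\rangle$ weight), while the two remaining factors enter as bare $w$ in $L^\infty_{xy}$. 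For $D_y^{3\varepsilon}F$ the Leibniz rule splits into the case where $D_y^{3\varepsilon}$ falls on the high factor — absorbed into $\sup_t\|w\|_{E^s}$ and yielding $\sup_t\|w\|_{E^s}\,\|w\|_{L^{\frac{5}{2\theta}}_TL^\infty_{xy}}^2$ — and the case where it falls on a low factor, yielding $\sup_t\|w\|_{E^s}\,\|D_y^{3\varepsilon}w\|_{L^{\frac{5}{2\theta}}_TL^\infty_{xy}}\|w\|_{L^{\frac{5}{2\theta}}_TL^\infty_{xy}}$, which is absorbed by AM--GM into $\sup_t\|w\|_{E^s}\big(\|w\|_{L^{\frac{5}{2\theta}}_TL^\infty_{xy}}^2+\|D_y^{3\varepsilon}w\|_{L^{\frac{5}{2\theta}}_TL^\infty_{xy}}^2\big)$. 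The residual regime where the spatial multiplier is $O(1)$ and no $L^\infty$ gain is available is controlled directly by $\|w^3\|_{L^2_{xy}}=\|w\|_{L^6_{xy}}^3\lesssim\sup_t\|w\|_{E^s}^3$ via an anisotropic Sobolev embedding of $E^s$ (valid for $s>\tfrac32$), producing the last term. Finally, Hölder in time converts $\|\cdot\|_{L^2_T}$ of a product of one $L^\infty_T$ factor and two $L^{\frac{5}{2\theta}}_T$ factors into a gain $T^{\frac12-\frac{4\theta}{5}}$ (which also forces $\theta\le\tfrac58$), and multiplying by the prefactor $T^{\frac12}$ gives exactly $T^{1-\frac{4\theta}{5}}$.

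The main obstacle I anticipate is the nonlocal, anisotropic weight $\langle D_x^{-1}D_y\rangle^{\frac{1-\theta}{2}+\varepsilon}$ acting on the cubic product: because $D_x^{-1}$ obeys no Leibniz rule and is singular near $\xi=0$, the naive distribution of derivatives must be replaced by a careful frequency decomposition that controls the low-$\xi$ output region and the interaction of the ratios $\mu_j/\xi_j$ across the three inputs. Confirming that, after the paraproduct splitting, this weight can still be dominated by the isotropic $E^s$ norm on the single high factor (through the $\tfrac{a}{1-b}\le s$ mechanism) while the two low factors remain bare $w$ or $D_y^{3\varepsilon}w$ in $L^\infty_{xy}$ is the delicate point; the remaining steps are routine bookkeeping with Hölder's inequality and Sobolev embeddings.
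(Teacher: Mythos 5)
Your overall skeleton matches the paper's: both start from Corollary \ref{littlewoodpaley} (applied to $w$ and, using that $D_y^{3\varepsilon}$ commutes with the linear flow, to $D_y^{3\varepsilon}w$), both handle the linear term by a pointwise Fourier-side Young bound of the form $\langle\xi\rangle^{a}|\mu/\xi|^{b}\lesssim|\mu/\xi|+\langle\xi\rangle^{a/(1-b)}$, and both get the factor $T^{1-\frac{4\theta}{5}}$ from $T^{\frac12}$ times the time-H\"older gain $T^{\frac{5-8\theta}{10}}$ with $\theta<\frac58$. But there is a genuine gap at exactly the point you yourself flag as "the delicate point": your high-low-low scheme, in which \emph{all} derivatives (the $\langle D_x\rangle^{\frac32-\theta+\varepsilon}$ and the anisotropic weight) land on a single factor estimated in $L^2_{xy}$ via $E^s$ while the other two factors are bare $w$ or $D_y^{3\varepsilon}w$ in $L^\infty_{xy}$, fails in the mixed regime where the dominant output $x$-frequency comes from one input factor and the dominant output $y$-frequency from a \emph{different} one ($\xi\approx\xi_1$ large, $\mu\approx\mu_2$ large). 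There the $y$-weight of order $\frac{1-\theta}{2}+\varepsilon$ (at $\theta=\frac12$, order $\approx\frac14$) must sit on factor $2$, which therefore cannot be taken bare in $L^\infty_{xy}$ — the left-hand side of the lemma only provides $D_y^{3\varepsilon}w$ in $L^\infty$, far below order $\frac14$ — and the $x$-derivatives sit on factor $1$, which then cannot be placed in $L^2_{xy}$, since H\"older with a third $L^\infty$ factor forces dual exponents. This is why the paper's term $A_{1,1}$ uses the three-exponent splitting $L^\infty_{xy}\times L^{\frac{2}{1-\theta}}_{xy}\times L^{\frac{2}{\theta}}_{xy}$, a Fourier-side H\"older bound $\|D_y^{\frac{1-\theta}{2}+\varepsilon}w\|_{L^{2/\theta}_{xy}}\lesssim\|w\|_{E^s}$, and a dyadic Gagliardo--Nirenberg--Brezis--Mironescu interpolation $\|\langle D_x\rangle^{1-\frac{\theta}{2}}w\|_{L^{2/(1-\theta)}_{xy}}\lesssim\|w\|_{E^s}^{1-\theta}\|w\|_{L^\infty_{xy}}^{\theta}$, producing the non-homogeneous contribution $\|w\|_{E^s}^{2-\theta}\|w\|_{L^\infty_{xy}}^{1+\theta}$ that is only afterwards symmetrized by AM--GM into the cubic right-hand side. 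None of this machinery is present, or replaceable by "routine bookkeeping," in your plan.

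This gap also falsifies your accounting of where the threshold $s>\frac32$ comes from. Your linear-term condition $\frac{a}{1-b}\le s$ with $a=1-\frac{3\theta}{5}+\varepsilon$, $b=\frac{1-\theta}{2}+\varepsilon$ gives, at $\theta=\frac12$, roughly $s\gtrsim\frac{14}{15}$ — not binding; even the "all derivatives on one factor" nonlinear term (the paper's $A_{1,2}$) only requires $s\gtrsim\frac{3-2\theta}{1+\theta}=\frac43$ at $\theta=\frac12$. The binding constraints are $\frac{3-3\theta}{2\theta}$ and $\frac{2-\theta}{2-2\theta}$, both equal to $\frac32$ at $\theta=\frac12$, and both arise precisely from the mixed term $A_{1,1}$ your scheme cannot produce; the threshold in the lemma is the optimization $\min_{\theta}\max\{\frac{3-2\theta}{1+\theta},\frac{3-3\theta}{2\theta},\frac{2-\theta}{2-2\theta}\}=\frac32$. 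A smaller inaccuracy: in the low-$\xi$ region the multiplier is not $O(1)$, because $\langle D_x^{-1}D_y\rangle^{\frac{1-\theta}{2}+\varepsilon}$ is singular near $\xi=0$; after absorbing the $|\xi|$ factors from $\partial_x$ one is left with a residual $y$-derivative of positive order on $\Delta_0 w^3$, which the paper handles with Kato's fractional Leibniz rule in $y$ (one factor $D_y^{\frac{1-\theta}{2}+\varepsilon}w$ in $L^2_{xy}$ controlled by $E^s$), not with the crude bound $\|w^3\|_{L^2_{xy}}\lesssim\|w\|_{E^s}^3$ you propose for that regime.
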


\begin{proof}
    First, for a fixed $\theta \in (0,\frac{5}{8}),$ we bound the quantity $\|u\|_{L^{\frac{5}{2\theta}}_{T}L^{\infty}_{xy}}.$ From Lemma \ref{littlewoodpaley}, we obtain 
    $$\|u\|_{L^{\frac{5}{2\theta}}_{T}L^{\infty}_{xy}}\leq [\sup_{0<t<T}\|\langle D_x\rangle^{1-\frac{3\theta}{5}+\varepsilon}\langle D_{x}^{-1}D_{y}\rangle^{\frac{1-\theta}{2}+\varepsilon}w\|_{L^{2}_{xy}}+T^{\frac{1}{2}}\|D_x\langle D_x\rangle^{\frac{1}{2}-\theta+\varepsilon}\langle D_{x}^{-1}D_{y}\rangle^{\frac{1-\theta}{2}+\varepsilon}w^3\|_{L^{2}_{T}L^{2}_{xy}}]$$

    First, we notice that 
    $$\|\langle D_x\rangle^{1-\frac{3\theta}{5}+\varepsilon}\langle D_{x}^{-1}D_{y}\rangle^{\frac{1-\theta}{2}+\varepsilon}w\|_{L^{2}_{xy}}\leq \|w\|_{E^s}$$
    if $s\geq \frac{1-\frac{3\theta}{5}+\varepsilon}{\frac{1+\theta}{2}-\varepsilon}.$

    We have that 
    $$\|D_x\langle D_x\rangle^{\frac{1}{2}-\theta+\varepsilon}\langle D_{x}^{-1}D_{y}\rangle^{\frac{1-\theta}{2}+\varepsilon}w^3\|_{L^{2}_{T}L^{2}_{xy}}\lesssim \|\Delta_0w^3\|_{L^{2}_{T}L^{2}_{xy}}+\|D_{y}^{\frac{1-\theta}{2}+\varepsilon}\Delta_0w^3\|_{L^{2}_{T}L^{2}_{xy}}$$
    $$+\|\langle D_x\rangle^{\frac{3}{2}-\theta+\varepsilon}\langle D_{x}^{-1}D_{y}\rangle^{\frac{1-\theta}{2}+\varepsilon}(1-\Delta_0)w^3\|_{L^{2}_{T}L^{2}_{xy}},$$
    where we used the inequality $$1_{\{|\xi|\leq 1\}}|\xi|^2(1+|\xi|^2)^{\frac{1}{2}-\theta+\varepsilon}(1+\Big|\frac{\mu}{\xi}\Big|^2)^{\frac{1-\theta}{2}+\varepsilon}\lesssim (1+|\mu|^{1-\theta+2\varepsilon})1_{\{|\xi|\leq 1\}},$$ for $\xi \in \mathbb{R}\setminus\{0\}, \mu\in \mathbb{R}.$
    For the first term, we bound it as 
    $$\|\Delta_0w^3\|_{L^{2}_{T}L^{2}_{xy}}\lesssim \|w\|_{L^{2}_{T}L^{\infty}_{xy}}^{2}\sup_{0<t<T}\|w\|_{E^s}T^{\frac{5-8\theta}{10}}$$
    Using Kato's inequalities, which says that for $0<\alpha<1, 1<p<\infty,$ we have 
    \begin{equation}\label{Kato}
        \|D^{\alpha}(fg)-fD^{\alpha}g-gD^{\alpha}f\|_{L^p(\mathbb{R})}\leq C\|g\|_{L^{\infty}(\mathbb{R})}\|D^{\alpha}f\|_{L^p(\mathbb{R})}.
    \end{equation}

    we can bound the second term as
    $$\|D_{y}^{\frac{1-\theta}{2}+\varepsilon}\Delta_0w^3\|_{L^{2}_{T}L^{2}_{xy}}\lesssim \|w\|_{L^{\frac{5}{2\theta}}_{T}L^{\infty}_{xy}}^{2}\|D_{y}^{\frac{1-\theta}{2}+\varepsilon}w\|_{L^{\infty}_{T}L^{2}_{xy}}T^{\frac{5-8\theta}{10}}\lesssim \|w\|_{L^{\frac{5}{2\theta}}_{T}L^{\infty}_{xy}}^{2}\sup_{0<t<T}\|w\|_{E^s}T^{\frac{5-8\theta}{10}},$$
    since $s\geq \frac{1-\theta}{2}+\varepsilon.$

    The third term is more complicated and we will estimate it in several steps. First, observe that 
    $$\|\langle D_x\rangle^{\frac{3}{2}-\theta+\varepsilon}\langle D_{x}^{-1}D_{y}\rangle^{\frac{1-\theta}{2}+\varepsilon}(1-\Delta_0)w^3\|_{L^{2}_{T}L^{2}_{xy}}$$
    $$\lesssim \|\langle D_x\rangle^{1-\frac{\theta}{2}+\varepsilon}\langle D_{y}\rangle^{\frac{1-\theta}{2}+\varepsilon}(1-\Delta_0)(1-\Delta^{2}_{0})w^3\|_{L^{2}_{T}L^{2}_{xy}}+\|\langle D_x\rangle^{\frac{3}{2}-\theta+\varepsilon}(1-\Delta_0)w^3\|_{L^{2}_{T}L^{2}_{xy}}$$
    $$=A_1+A_2,$$
    where we used the inequality $$(1+|\xi|^2)^{\frac{3}{2}-\theta+\varepsilon}\Big(1+\Big|\frac{\mu}{\xi}\Big|^2\Big)^{\frac{1-\theta}{2}+\varepsilon}1_{\{|\xi|\geq 1\}}\leq (1+|\xi|^2)^{1-\frac{\theta}{2}}|\mu|^{1-\theta+2\varepsilon}1_{\{|\xi|\geq 1\}}1_{\{|\mu|\geq 1\}}+(1+|\xi|^2)^{\frac{3}{2}-\theta+\varepsilon}1_{\{|\xi|\geq 1\}}.$$

    We start with $A_1.$ Using the dyadic decomposition, we obtain that 
    $$\|\langle D_x\rangle^{1-\frac{\theta}{2}+\varepsilon}\langle D_{y}\rangle^{\frac{1-\theta}{2}+\varepsilon}(1-\Delta_0)(1-\Delta^{2}_{0})w^3\|_{L^{2}_{xy}}\lesssim$$
    $$\lesssim \|w\|_{L^{\infty}_{xy}}\|\langle D_x\rangle^{1-\frac{\theta}{2}}w\|_{L^{\frac{2}{1-\theta}}_{xy}}\|D_{y}^{\frac{1-\theta}{2}+\varepsilon}w\|_{L^{\frac{2}{\theta}}_{xy}}+\|w\|_{L^{\infty}_{xy}}^2\|\langle D_x\rangle^{1-\frac{\theta}{2}+\varepsilon} D_{y}^{\frac{1-\theta}{2}+\varepsilon}w\|_{L^{2}_{xy}}=A_{1,1}+A_{1,2}.$$
    For $A_{1,2}\leq \|w\|_{L^{\infty}_{xy}}^2\|w\|_{E^s}$ for $s\geq\frac{\frac{3}{2}-\theta+2\varepsilon}{\frac{1+\theta}{2}-\varepsilon}.$

    For the term $A_{1,1}$, we divide the estimates in several parts. First, 
    $$\|D_{y}^{\frac{1-\theta}{2}+\varepsilon}w\|_{L^{\frac{2}{\theta}}_{xy}}\lesssim \||\mu|^{\frac{1-\theta}{2}+\varepsilon}\hat{w}\|_{L^{\frac{2}{2-\theta}}_{\xi\mu}}$$
    $$\leq \norm{\frac{|\mu|^{\frac{1-\theta}{2}+\varepsilon}}{(1+\xi^2)^{\frac{a}{2}}(1+\frac{\mu^2}{\xi^2})^{\frac{b}{2}}}}_{L^{\frac{2}{1-\theta}}_{\xi\mu}}\norm{(1+\xi^2)^{\frac{a}{2}}\Big(1+\frac{\mu^2}{\xi^2}\Big)^{\frac{b}{2}}\hat{w}}_{L^{2}_{\xi\mu}}$$
    and the first factor is bounded if $a>\frac{3}{2}(1-\theta)+\varepsilon, b>1-\theta+\varepsilon.$ Hence, 
    $$\|D_{y}^{\frac{1-\theta}{2}+\varepsilon}w\|_{L^{\frac{2}{\theta}}_{xy}}\lesssim \norm{\langle D_x\rangle^{\frac{3}{2}(1-\theta)+\varepsilon}\langle D_{x}^{-1}D_{y}\rangle^{1-\theta+\varepsilon}w}_{L^{2}_{xy}}\lesssim \|w\|_{E^s},$$
    for $s\geq \frac{\frac{3}{2}(1-\theta)+\varepsilon}{\theta-\varepsilon}.$

    For the remaining term in $A_{1,1}$ we proceed by a dyadic decomposition. We observe that 
    $$\|\langle D_x\rangle^{1-\frac{\theta}{2}}\Delta_0w\|_{L^{\frac{2}{1-\theta}}_{xy}}\lesssim \|\langle D_x\rangle^{1-\frac{\theta}{2}}\Delta_0w\|_{L^{2}_{xy}}^{1-\theta}\|\langle D_x\rangle^{1-\frac{\theta}{2}}\Delta_0w\|_{L^{\infty}_{xy}}^{\theta}\leq \|w\|_{E^s}^{1-\theta}\|w\|_{L^{\infty}_{xy}}^{\theta}\|\langle D_x\rangle^{1-\frac{\theta}{2}}\Delta_0\check{\eta}\|_{L^{1}_{x}}^{\theta}$$
    $$\leq \|w\|_{E^s}^{1-\theta}\|w\|_{L^{\infty}_{xy}}^{\theta}.$$
    For higher frequencies, we apply the Gagliardo-Nirenberg-Brezis-Mironescu inequality to get for $k\geq 1,$
     $$\|\langle D_x\rangle^{1-\frac{\theta}{2}}\Delta_kw\|_{L^{\frac{2}{1-\theta}}_{x}}\leq C\|\langle D_x\rangle^{\frac{2-\theta}{2(1-\theta)}}\Delta_kw\|_{L^{2}_{x}}^{1-\theta}\|\Delta_kw\|_{L^{\infty}_{x}}^{\theta},$$
    which implies that 
    $$\|\langle D_x\rangle^{1-\frac{\theta}{2}}\Delta_kw\|_{L^{\frac{2}{1-\theta}}_{xy}}\leq C\|\langle D_x\rangle^{\frac{2-\theta}{2(1-\theta)}}\Delta_kw\|_{L^{2}_{xy}}^{1-\theta}\|\Delta_kw\|_{L^{\infty}_{xy}}^{\theta}\leq C2^{-k\varepsilon}\|\langle D_x\rangle^{\frac{2-\theta}{2(1-\theta)}+\varepsilon}\Delta_kw\|_{L^{2}_{xy}}^{1-\theta}\|\Delta_kw\|_{L^{\infty}_{xy}}^{\theta}$$
    $$\leq C2^{-k\varepsilon}\|w\|_{E^s}^{1-\theta}\|w\|_{L^{\infty}_{xy}}^{\theta}\|\check{\eta}\|_{L^{1}_{x}}^{\theta}\leq C2^{-k\varepsilon}\|w\|_{E^s}^{1-\theta}\|w\|_{L^{\infty}_{xy}}^{\theta},$$
    for $s\geq \frac{2-\theta}{2(1-\theta)}+\varepsilon.$ Summing over $k,$ we obtain 
    $$\|\langle D_x\rangle^{1-\frac{\theta}{2}}w\|_{L^{\frac{2}{1-\theta}}_{xy}}\leq C\|w\|_{E^s}^{1-\theta}\|w\|_{L^{\infty}_{xy}}^{\theta}.$$
    We conclude that $$\|\langle D_x\rangle^{1-\frac{\theta}{2}+\varepsilon}\langle D_{y}\rangle^{\frac{1-\theta}{2}+\varepsilon}(1-\Delta_0)(1-\Delta^{2}_{0})w^3\|_{L^{2}_{xy}}\leq C\{\|w\|_{E^s}^{2-\theta}\|w\|_{L^{\infty}_{xy}}^{1+\theta}+\|w\|_{E^s}\|w\|_{L^{\infty}_{xy}}^{2}\},$$
    hence by H\"{o}lder's inequality in time, together with the AM-GM inequality we have
    $$A_1\leq \sup_{0<t<T}\|w\|_{E^s}\|w\|_{L^{\frac{5}{2\theta}}_{T}L^{\infty}_{xy}}^{2}T^{\frac{5-8\theta}{10}}+\sup_{0<t<T}\|w\|_{E^s}^{2-\theta}\|w\|_{L^{\frac{5}{2\theta}}_{T}L^{\infty}_{xy}}^{1+\theta}T^{\frac{5-4\theta(1+\theta)}{10}}$$
    $$\lesssim T^{\frac{5-8\theta}{10}}(\sup_{0<t<T}\|w\|_{E^s}\|w\|_{L^{\frac{5}{2\theta}}_{T}L^{\infty}_{xy}}^{2}+\sup_{0<t<T}\|w\|_{E^s}^{3}).$$

    For the term $A_2$ we start by applying Kato's inequality \eqref{Kato} to obtain 
    $$\|\langle D_x\rangle^{\frac{3}{2}-\theta+\varepsilon}w^3\|_{L^{2}_{xy}}\leq \|w\|_{L^{\infty}_{xy}}^2\|\langle D_x \rangle^{\frac{3}{2}-\theta+\varepsilon}w\|_{L^{2}_{xy}},$$
    and since $s>\frac{3}{2}-\theta+\varepsilon,$ this implies that 
    $$A_2\leq C\|w\|_{L^{\frac{5}{2\theta}}_{T}L^{\infty}_{xy}}^2\sup_{0<t<T}\|w\|_{E^s}T^{\frac{5-8\theta}{10}}.$$

    We conclude that if $s>\min_{\theta\in (0,\frac{5}{8})}\max\{\frac{3-2\theta}{1+\theta},\frac{3-3\theta}{2\theta},\frac{2-\theta}{2-2\theta}\}$
    $$\|w\|_{L^{\frac{5}{2\theta}}L^{\infty}_{xy}}\lesssim \sup_{0<t<T}\|w\|_{E^s}+T^{1-\frac{4\theta}{5}}(\sup_{0<t<T}\|w\|_{E^s}\|w\|_{L^{\frac{5}{2\theta}}_{T}L^{\infty}_{xy}}^{2}+\sup_{0<t<T}\|w\|_{E^s}^{3}). $$

    For the term $\|D_{y}^{3\varepsilon}w\|_{L^{\frac{5}{2\theta}}_{T}L^{\infty}_{xy}}$ we proceed the same way as above, the only difference will be for the equivalent term 
    $$\|\langle D_x\rangle^{\frac{3}{2}-\theta+\varepsilon}D_{y}^{3\varepsilon}w\|_{L^{2}_{T}L^{2}_{xy}}\leq C\|w\|_{L^{\frac{5}{2\theta}}_{T}L^{\infty}_{xy}}(\|w\|_{L^{\frac{5}{2\theta}}_{T}L^{\infty}_{xy}}+\|D_{y}^{3\varepsilon}w\|_{L^{\frac{5}{2\theta}}_{T}L^{\infty}_{xy}})\sup_{0<t<T}\|w\|_{E^s}T^{\frac{5-8\theta}{10}},$$
    which implies that for $s>\min_{\theta\in [0,\frac{5}{8})}\max\{\frac{3-2\theta}{1+\theta},\frac{3-3\theta}{2\theta},\frac{2-\theta}{2-2\theta}\}$
    $$\|w\|_{L^{\frac{5}{2\theta}}L^{\infty}_{xy}}+\|D_{y}^{3\varepsilon}w\|_{L^{\frac{5}{2\theta}}_{T}L^{\infty}_{xy}}$$
    $$\leq C\{\sup_{0<t<T}\|w\|_{E^s}+T^{1-\frac{4\theta}{5}}[\sup_{0<t<T}\|w\|_{E^s}(\|w\|_{L^{\frac{5}{2\theta}}_{T}L^{\infty}_{xy}}+\|D_{y}^{3\varepsilon}w\|_{L^{\frac{5}{2\theta}}_{T}L^{\infty}_{xy}})^{2}+\sup_{0<t<T}\|w\|_{E^s}^{3}]\}.$$
    We conclude by noting that $\min_{\theta\in (0,\frac{5}{8})}\max\{\frac{3-2\theta}{1+\theta},\frac{3-3\theta}{2\theta},\frac{2-\theta}{2-2\theta}\}=\frac{3}{2}$ attained for $\theta=\frac{1}{2}.$
    
    \end{proof}

    \begin{proposition} \label{claim}
        We suppose that $u$ is a solution to \eqref{fifthKPI2} on $[0,T_0], T_0<1$ such that $\sup_{0<t<T}\|u(t)\|_{E^s}\leq 2\|u_0\|_{E^s}$ and $T_0<\frac{1}{A\|u_0\|_{E^s}^{4}(1+\|u_0\|_{E^s}^{2})^{2}},$ for $A$ large. 
        Then 
        $$\|u\|_{L^{2}_{T_0}L^{\infty}_{xy}}+\|\partial_x u\|_{L^{2}_{T_0}L^{\infty}_{xy}}\leq C_{\varepsilon}T_{0}^{\varepsilon}(1+\|u_0\|_{E^s}+\|u_0\|_{E^s}^{3}).$$
    \end{proposition}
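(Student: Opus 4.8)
The plan is to run the machinery of the preceding lemmas with $w=u$, the given solution, so that the forcing is the cubic term $\partial_x(u^3)$, matching the form $-w^2\partial_x w$ used in Lemma \ref{mainbound} and Corollary \ref{boundforL2T} up to a harmless constant, and to specialize to $\theta=\tfrac12$, the value that realizes the regularity threshold in Lemma \ref{mainbound} and is therefore compatible with $s>\tfrac32$. With this choice $\frac{5}{2\theta}=5$, $1-\frac{4\theta}{5}=\frac35$ and $\frac{5-8\theta}{10}=\frac1{10}$. Writing $M=\sup_{0<t<T_0}\|u\|_{E^s}$, so that the standing hypothesis gives $M\le 2\|u_0\|_{E^s}$, and setting $Y=\|u\|_{L^5_{T_0}L^\infty_{xy}}+\|D_y^{3\varepsilon}u\|_{L^5_{T_0}L^\infty_{xy}}$, Lemma \ref{mainbound} furnishes the self-improving inequality
\[
Y\le C\big[M+T_0^{3/5}\big(MY^2+M^3\big)\big].
\]

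The first, and main, step is to convert this quadratic-in-$Y$ inequality into the bound $Y\lesssim M$. Since $u_0\in H^{s_0}_{-1}$ with $s_0$ large the solution is smooth, so $T\mapsto Y$ (computed over $[0,T]$) is continuous with $Y\to0$ as $T\to0$, and a continuity argument applies: as long as $Y\le 2CM$ one has $CT_0^{3/5}MY^2\le 4C^3T_0^{3/5}M^3$, and the bound propagates provided $(4C^2+1)\,T_0^{3/5}M^2<1$. The delicate point is that this smallness must be uniform over all data sizes, and this is exactly what the restriction on $T_0$ secures, once we split according to whether $M\le M_0$ or $M>M_0$, where $M_0:=(4C^2+1)^{-1/2}$. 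If $M\le M_0$, then already $(4C^2+1)T_0^{3/5}M^2\le (4C^2+1)M^2\le1$ using only $T_0<1$. If instead $M>M_0$, then since $M^{2/3}(1+M^2)^2$ is bounded below on $\{M\ge M_0\}$, the hypothesis $T_0<\big(A\|u_0\|_{E^s}^4(1+\|u_0\|_{E^s}^2)^2\big)^{-1}$ with $A$ chosen $\gtrsim(4C^2+1)^2$ forces $(4C^2+1)T_0^{3/5}M^2<1$ (here one uses $M\le 2\|u_0\|_{E^s}$). In either case the continuity argument closes and yields $Y\le 2CM\lesssim\|u_0\|_{E^s}$.

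With $Y$ under control the two pieces of the claimed estimate follow from the linear bounds already in hand. For the derivative term I would feed $Y\lesssim M$ into Corollary \ref{boundforL2T}, obtaining
\[
\|\partial_x u\|_{L^2_{T_0}L^\infty_{xy}}\lesssim C_\varepsilon T_0^{\varepsilon+\frac1{10}}M\big(1+T_0^{3/5}M^2\big)\lesssim C_\varepsilon T_0^{\varepsilon+\frac1{10}}\big(M+M^3\big).
\]
For the undifferentiated term I would simply use Hölder in time, $L^5_{T_0}\hookrightarrow L^2_{T_0}$ with constant $T_0^{3/10}$, so that $\|u\|_{L^2_{T_0}L^\infty_{xy}}\le T_0^{3/10}\|u\|_{L^5_{T_0}L^\infty_{xy}}\le T_0^{3/10}Y\lesssim T_0^{3/10}M$. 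Since $T_0<1$ we have $T_0^{\varepsilon+1/10}\le T_0^\varepsilon$ and $T_0^{3/10}\le T_0^\varepsilon$ for every $\varepsilon\le\frac{3}{10}$, and $M\le 2\|u_0\|_{E^s}$; summing the two bounds produces $\|u\|_{L^2_{T_0}L^\infty_{xy}}+\|\partial_x u\|_{L^2_{T_0}L^\infty_{xy}}\lesssim C_\varepsilon T_0^\varepsilon(1+\|u_0\|_{E^s}+\|u_0\|_{E^s}^3)$, as required.

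The step I expect to be the genuine obstacle is the continuity argument of the second paragraph: the inequality from Lemma \ref{mainbound} is useful only after the quadratic term $T_0^{3/5}MY^2$ is absorbed, and because the needed smallness $T_0^{3/5}M^2\lesssim1$ is \emph{not} implied by the hypothesis on $T_0$ for arbitrarily small $M$, one cannot close it in a single stroke; the case split on the size of $M$ (small data needing only $T_0<1$, larger data exploiting the quantitative restriction with $A$ large) is what makes the bound uniform. A secondary bookkeeping point is to check that $\theta=\frac12$ lies in the admissible range of every invoked lemma and attains $\min_{\theta}\max\{\frac{3-2\theta}{1+\theta},\frac{3-3\theta}{2\theta},\frac{2-\theta}{2-2\theta}\}=\frac32$, so that the hypothesis $s>\frac32$ is exactly what the argument consumes.
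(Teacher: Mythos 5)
Your argument is correct and follows the paper's strategy in all essentials: run Lemma \ref{mainbound} with $\theta=\tfrac12$ on the quantity $Y$ (the paper calls it $f(T_0)$), absorb the quadratic term by a continuity argument to get $Y\lesssim \|u_0\|_{E^s}+\|u_0\|_{E^s}^3$, then conclude via Corollary \ref{boundforL2T} together with H\"older in time for the undifferentiated piece. The one genuine divergence is how the absorption is closed. You claim that the smallness $T_0^{3/5}M^2\lesssim 1$ is \emph{not} implied by the hypothesis on $T_0$ when $M$ is small, and you therefore split cases at $M_0=(4C^2+1)^{-1/2}$. That perceived obstacle is illusory: since $T_0<1$ one has $T_0^{3/5}\le T_0^{1/2}$, and the hypothesis gives $T_0^{1/2}<A^{-1/2}\|u_0\|_{E^s}^{-2}(1+\|u_0\|_{E^s}^2)^{-1}$, whose exponents exactly cancel the factor $\|u_0\|_{E^s}^{2}(1+\|u_0\|_{E^s}^{2})$ arising in the bootstrap, uniformly in the data size --- this is precisely why the hypothesis carries the power $\|u_0\|_{E^s}^4(1+\|u_0\|_{E^s}^2)^2$ and why the paper's proof can close in one stroke under the single condition $128C^2<A^{1/2}$. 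Your case split is a valid (if slightly longer) substitute, since for $M\le M_0$ the bound $T_0<1$ alone suffices and for $M>M_0$ the quantity $\|u_0\|_{E^s}$ is bounded below, so both routes yield $Y\lesssim M$; your error is only in the diagnosis, not in the mathematics. Two minor points in your favor: your $Y=\|u\|_{L^5_{T_0}L^\infty_{xy}}+\|D_y^{3\varepsilon}u\|_{L^5_{T_0}L^\infty_{xy}}$ matches the quantity actually controlled by Lemma \ref{mainbound} and consumed by Corollary \ref{boundforL2T} more faithfully than the paper's $f(T_0)$, which is written with $\|\partial_x u\|_{L^5_{T_0}L^\infty_{xy}}$; and your explicit H\"older step $\|u\|_{L^2_{T_0}L^\infty_{xy}}\le T_0^{3/10}Y$ (with the harmless restriction $\varepsilon\le\tfrac{3}{10}$) cleanly accounts for the undifferentiated term, which the paper handles only implicitly through the corollary.
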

    \begin{proof}
        Define $f(T_0)=\|u\|_{L^{5}_{T_0}L^{\infty}_{xy}}+\|\partial_x u\|_{L^{5}_{T_0}L^{\infty}_{xy}}.$ From Lemma \ref{mainbound} and the first assumption of the proposition, we find that 
        $$f(T_0)\leq 8C(\|u_0\|_{E^s}+\|u_0\|_{E^s}^{3})+2CT_0^{\frac{3}{5}}\|u_0\|_{E^s}f(T_0)^2.$$
        If $128C^2<A^{\frac{1}{2}}$ for $A$ sufficiently large and using the assumption that $T_0<\frac{1}{A\|u_0\|_{E^s}^{4}(1+\|u_0\|_{E^s}^{2})^{2}},$ then $f(T_0)\leq 2B=16C(\|u_0\|_{E^s}+\|u_0\|_{E^s}^{3}).$ 
        By Corollary \ref{boundforL2T} for $\theta=\frac{1}{2},$ we obtain 
        $$\|u\|_{L^{2}_{T_0}L^{\infty}_{xy}}+\|\partial_x u\|_{L^{2}_{T_0}L^{\infty}_{xy}}\leq C_{\varepsilon}T_{0}^{\varepsilon+\frac{1}{10}}\sup_{0<t<T_{0}}\|u_0\|_{E^s}[1+64CT_{0}^{\frac{3}{5}}(\|u_0\|_{E^s}+\| u_0\|_{E^s}^{3})^2].$$
        
    \end{proof}
         Define 
        $$T_1=\min\Big\{1, \frac{1}{A\|u_0\|_{E^s}^{4}(1+\|u_0\|_{E^s}^{2})^2}, \Big[\frac{3}{4CC_{\varepsilon}^{2}(1+\|u_0\|_{E^s}+\|u_0\|_{E^s}^{3})^2\|u_0\|_{E^s}^{2}}\Big]^{\frac{1}{2(\varepsilon+\frac{1}{10})}}\Big\}.$$ 
        Also, denote $\tilde{T}_{1}=\inf\{t>0: \|u(t)\|_{E^s}\geq 2\|u_0\|_{E^s}\}.$
   \begin{proposition}\label{welposedness}
       We suppose that $u$ is a solution to \ref{fifthKPI2} on $[0,\tilde{T}_1].$ Then: 
       \begin{itemize}
           \item[a)] $\sup_{0<t<\tilde{T}_1}\|u(t)\|_{E^s}\leq 2\|u_0\|_{E^s}$
            \item[b)] $\|u\|_{L^{2}_{\tilde{T}_1}L^{\infty}_{xy}}+\|\partial_x u\|_{L^{2}_{\tilde{T}_1}L^{\infty}_{xy}}\leq C_{\varepsilon}\tilde{T}_{1}^{\varepsilon}(1+\|u_0\|_{E^s}+\|u_0\|_{E^s}^{3}).$
       \end{itemize}
   \end{proposition}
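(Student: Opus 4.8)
The plan is to run the standard continuity (bootstrap) argument that plays the energy estimate of Lemma \ref{energyestimates} off against the $L^2_TL^\infty_{xy}$ control of Proposition \ref{claim}, with the precise value of $T_1$ engineered so that the solution cannot double its $E^s$-norm before time $T_1$. Conclusion (a) is essentially built into the definition of $\tilde T_1$: since $t\mapsto\|u(t)\|_{E^s}$ is continuous and $\tilde T_1=\inf\{t>0:\|u(t)\|_{E^s}\geq 2\|u_0\|_{E^s}\}$, on the open interval $(0,\tilde T_1)$ we have $\|u(t)\|_{E^s}<2\|u_0\|_{E^s}$, so $\sup_{0<t<\tilde T_1}\|u(t)\|_{E^s}\leq 2\|u_0\|_{E^s}$; moreover, because the doubling set is closed, $\|u(\tilde T_1)\|_{E^s}=2\|u_0\|_{E^s}$ whenever that set is nonempty. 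The real content is therefore to show the doubling cannot occur before $T_1$, i.e. $\tilde T_1\geq T_1$, and to record (b) on the way.

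For (b) I would invoke Proposition \ref{claim} directly. Its two hypotheses are exactly $\sup_{0<t<\tilde T_1}\|u(t)\|_{E^s}\leq 2\|u_0\|_{E^s}$, which is (a), and the smallness $\tilde T_1<\big(A\|u_0\|_{E^s}^4(1+\|u_0\|_{E^s}^2)^2\big)^{-1}$, which is one of the three terms in the definition of $T_1$ and hence holds in the operative regime $\tilde T_1\leq T_1$. This gives
$$\|u\|_{L^2_{\tilde T_1}L^\infty_{xy}}+\|\partial_x u\|_{L^2_{\tilde T_1}L^\infty_{xy}}\leq C_\varepsilon\tilde T_1^{\varepsilon}(1+\|u_0\|_{E^s}+\|u_0\|_{E^s}^3),$$
which is (b); in the verification one uses that the bracket $[1+64C\tilde T_1^{3/5}(\|u_0\|_{E^s}+\|u_0\|_{E^s}^3)^2]$ from the proof of Proposition \ref{claim} is reduced to a harmless constant precisely by the parabolic-type term of $T_1$.

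The heart of the argument is then the Gronwall step. Writing $N(t)^2=\|\langle D_x\rangle^s u(t)\|_{L^2_{xy}}^2+\|D_yD_x^{-1}u(t)\|_{L^2_{xy}}^2$, which is comparable to $\|u(t)\|_{E^s}^2$, Lemma \ref{energyestimates} gives $\frac{d}{dt}N(t)^2\leq C(\|u\|_{L^\infty_{xy}}^2+\|\partial_x u\|_{L^\infty_{xy}}^2)N(t)^2$, so Gronwall yields
$$N(\tilde T_1)^2\leq N(0)^2\exp\Big(C\big(\|u\|_{L^2_{\tilde T_1}L^\infty_{xy}}^2+\|\partial_x u\|_{L^2_{\tilde T_1}L^\infty_{xy}}^2\big)\Big).$$
Inserting (b) bounds the exponent by a constant multiple of $\tilde T_1^{2(\varepsilon+\frac{1}{10})}(1+\|u_0\|_{E^s}+\|u_0\|_{E^s}^3)^2\|u_0\|_{E^s}^2$ (using the sharper $\tilde T_1^{\varepsilon+\frac{1}{10}}$ form from the proof of Proposition \ref{claim}). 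The third term of $T_1$, with exponent $\tfrac{1}{2(\varepsilon+\frac1{10})}$ and denominator proportional to $(1+\|u_0\|_{E^s}+\|u_0\|_{E^s}^3)^2\|u_0\|_{E^s}^2$, is chosen exactly so that this quantity stays strictly below the threshold that forces $N(\tilde T_1)<2N(0)$, i.e. $\|u(\tilde T_1)\|_{E^s}<2\|u_0\|_{E^s}$ \emph{strictly}. This contradicts $\|u(\tilde T_1)\|_{E^s}=2\|u_0\|_{E^s}$ unless the doubling set is empty up to $T_1$, giving $\tilde T_1\geq T_1$; hence both (a) and (b) hold on $[0,T_1]$, which are precisely the a priori estimates (a)--(b) required by Proposition \ref{mainproposition} with $\tilde T=T_1$.

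The main obstacle is the bookkeeping of constants in this last step: the powers of $\|u_0\|_{E^s}$ and of $\tilde T_1$ emerging from Proposition \ref{claim} and from the comparison $N(t)\sim\|u(t)\|_{E^s}$ must be matched against the exact definition of $T_1$ so that the Gronwall factor is \emph{strictly} below the doubling level (equivalently, $N(\tilde T_1)^2-N(0)^2$ is strictly less than the permissible increment). One must keep all three competing constraints in the $\min$ defining $T_1$ active simultaneously — the value $1$ taking care of small data, the term $\big(A\|u_0\|_{E^s}^4(1+\|u_0\|_{E^s}^2)^2\big)^{-1}$ reducing the nonlinear bracket of Proposition \ref{claim} to a constant, and the third term controlling the Gronwall exponent — and track the norm-equivalence constant between $N(t)$ and $\|u(t)\|_{E^s}$ so that the strict inequality genuinely produces the contradiction. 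Once this is secured, the continuity argument closes.
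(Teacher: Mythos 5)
Your proposal is correct and is essentially the paper's own argument: the paper proves Proposition \ref{welposedness} exactly by the continuity (contradiction) method of \cite{KenigZiesler}, combining the Gr\"onwall step from Lemma \ref{energyestimates} with the bound of Proposition \ref{claim}, with the three terms in the definition of $T_1$ playing precisely the roles you identify. Your write-up in fact spells out the bootstrap bookkeeping (the strict inequality $N(\tilde{T}_1)^2<4N(0)^2$ forced by the third term of $T_1$) that the paper leaves implicit.
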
     
        \begin{proof}
          This follows from \cite{KenigZiesler} combining Lemma \ref{energyestimates} with Proposition \ref{claim}, by a contradiction argument. 
        \end{proof}
        Using Proposition \ref{mainproposition}, we conclude that \eqref{fifthKPI2} is locally well-posed in $E^s$ for $s>\frac{3}{2}.$
       
\subsection{Global Well-Posedness for the $L^2$ critical case of the Fifth Order mKP-I} 

We start with the sharp anisotropic Sobolev inequality stated in \cite{EsfahaniLevandovsky}. 

\begin{lemma} 
    Let $\varphi\in G_1$, where $G_1$ is defined in \ref{defgroundstate}. For $0<p<4,$ we have the following: 
    $$\|u\|_{L^{p+2}(\mathbb{R}^2)}^{p+2}\leq \frac{p+2}{2}\Big(\frac{4}{p}\Big)^{\frac{p}{2}}\|\varphi\|_{L^2(\mathbb{R}^2)}^{-p}\|u\|_{L^2(\mathbb{R}^2)}^{2}\|\partial_{xx}u\|_{L^2(\mathbb{R}^2)}^{\frac{p}{2}}\|\partial_{x}^{-1}\partial_{y}u\|_{L^2(\mathbb{R}^2)}^{\frac{p}{2}},$$
    and the inequality is sharp with equality for $u=\varphi.$
\end{lemma}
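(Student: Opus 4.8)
The plan is to identify the optimal constant as the reciprocal of the minimum of a Weinstein-type quotient and to exhibit the ground state $\varphi\in G_1$ as a minimizer. For $u\neq 0$ in the energy space, define
$$ J(u)=\frac{\|u\|_{L^2}^{2}\,\|\partial_{xx}u\|_{L^2}^{p/2}\,\|\partial_x^{-1}\partial_y u\|_{L^2}^{p/2}}{\|u\|_{L^{p+2}}^{p+2}}, $$
so that the target inequality is exactly $J(u)\ge 1/C_{opt}$ with $C_{opt}=\frac{p+2}{2}(4/p)^{p/2}\|\varphi\|_{L^2}^{-p}$; it therefore suffices to prove $\inf_u J(u)=J(\varphi)$ and to compute $J(\varphi)$. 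The first observation is that $J$ is invariant under the amplitude scaling $u\mapsto \alpha u$ and under the two-parameter anisotropic dilation $u\mapsto u(\lambda x,\mu y)$: a short computation with the Fourier multiplier of $\partial_x^{-1}\partial_y$ shows the dilation factors of numerator and denominator both equal $(\lambda\mu)^{-1}$ and cancel. This three-parameter symmetry is used both to normalize the minimization and to match the Euler--Lagrange equation to \eqref{groundstate}.

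Next I would set $I=\inf\{J(u):u\neq 0\}$, noting $I>0$ since the anisotropic Sobolev embedding already yields the inequality with some finite (non-sharp) constant. Using scaling invariance I would reduce to minimizing the numerator subject to $\|u\|_{L^{p+2}}=1$ and show the infimum is attained. A minimizing sequence is bounded in the energy space; the \emph{main obstacle}, and the technically heaviest step, is the failure of compactness of the embedding on all of $\mathbb{R}^2$, which I would resolve by the concentration--compactness principle adapted to the anisotropic scaling, ruling out vanishing via $I>0$ and dichotomy via strict subadditivity of the constrained infimum. This produces a minimizer $\psi$.

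I would then derive the Euler--Lagrange equation for $\psi$: differentiating the constrained functional gives $(p+2)|\psi|^p\psi=2\alpha_1\psi+2\alpha_2\,\partial_x^4\psi+2\alpha_3\,\partial_x^{-2}\partial_y^2\psi$ with positive coefficients $\alpha_i$, where one uses that $\partial_x^{-1}\partial_y$ is skew-adjoint so that $(\partial_x^{-1}\partial_y)^*(\partial_x^{-1}\partial_y)=\partial_x^{-2}\partial_y^2$. Choosing the amplitude and the two dilation parameters to normalize the three coefficients to the values forced by \eqref{groundstate}, the rescaled minimizer solves the ground state equation with $c=1$; a standard argument on the constraint (Nehari) manifold identifies it as a least-energy solution, i.e.\ an element of $G_1$. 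Since $J$ is scaling-invariant and all ground states share the same $L^2$ norm (by Lemma \ref{pohozaev}, $L_1(\varphi)=\frac{p}{4}\|\varphi\|_{L^2}^2=d_1$ is independent of the choice), we conclude $I=J(\psi)=J(\varphi)$ for every $\varphi\in G_1$.

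Finally I would compute $J(\varphi)$ directly from Lemma \ref{pohozaev}, which gives $\|\partial_{xx}\varphi\|_{L^2}^2=\|\partial_x^{-1}\partial_y\varphi\|_{L^2}^2=\frac{p}{4}\|\varphi\|_{L^2}^2$ and $\int\varphi^{p+2}=\frac{p+2}{2}\|\varphi\|_{L^2}^2$. Hence the numerator equals $(p/4)^{p/2}\|\varphi\|_{L^2}^{2+p}$ and the denominator equals $\frac{p+2}{2}\|\varphi\|_{L^2}^2$, so that $J(\varphi)=\frac{2}{p+2}(p/4)^{p/2}\|\varphi\|_{L^2}^p=1/C_{opt}$, which is precisely the claimed constant. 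Equality in the inequality holds exactly when $u$ minimizes $J$, in particular for $u=\varphi$ and its images under the scaling symmetries, yielding the sharpness assertion.
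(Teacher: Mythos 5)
Your proposal is correct and follows essentially the same route as the paper, which proves this lemma exactly as in \cite{ChenFengLiu}: a Weinstein-type minimization of the scale- and dilation-invariant quotient, attainment via concentration--compactness, identification of the rescaled minimizer as a ground state via the Euler--Lagrange equation, and evaluation of the sharp constant through the identities of Lemma \ref{pohozaev}. Your scaling bookkeeping (both numerator and denominator pick up the factor $(\lambda\mu)^{-1}$) and the final computation $J(\varphi)=\frac{2}{p+2}\big(\frac{p}{4}\big)^{p/2}\|\varphi\|_{L^2}^{p}$ match the claimed constant.
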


The proof is the same as the sharp anisotropic Sobolev inequality showed in \cite{ChenFengLiu}. For $p=2,$ we have that 
\begin{equation}\label{anisotropic}
    \|u\|_{L^{4}(\mathbb{R}^2)}^{4}\leq 4\|\varphi\|_{L^2(\mathbb{R}^2)}^{-2}\|u\|_{L^2(\mathbb{R}^2)}^{2}\|\partial_{xx}u\|_{L^2(\mathbb{R}^2)}\|\partial_{x}^{-1}\partial_{y}u\|_{L^2(\mathbb{R}^2)}.
\end{equation}

Using \eqref{anisotropic}, we notice that 
$$\|u(t)\|_{E^2}^{2}\leq \|u(t)\|_{L^2}^{2}+2E(u(t))+\frac{1}{2}\|u(t)\|_{L^4}^{4}$$
$$\leq \|u_0\|_{L^2}^{2}+2E(u_0)+\frac{\|u_0\|_{L^2}^{2}}{\|\varphi\|_{L^2}^{2}}(\|\partial_{xx}u(t)\|_{L^2}^{2}+\|\partial_{x}^{-1}\partial_{y}u(t)\|_{L^2}^{2})$$
and if, $\|u_0\|_{L^2}<\|\varphi\|_{L^2},$ it implies that 
$$\|u(t)\|_{E^2}^{2}\leq \|u_0\|_{L^2}^{2}+\frac{2}{1-\frac{\|u_0\|_{L^2}}{\|\varphi\|_{L^2}^{2}}}E(u_0),$$
and we conclude in this case that equation \eqref{fifthKPI2} is globally well-posed in $E^2.$

\section{Transverse Blow-Up for the Generalized fifth order KP-I}\label{BU}

Recall that $L_{1}(u)=E(u)+V(u)$ with $E(u)=\frac{1}{2}\int_{\mathbb{R}^2}(\partial_{x}^{2}u)^2+(\partial_{x}^{-1}\partial_{y}u)^2dxdy-\frac{1}{p+2}\int_{\mathbb{R}^2}u^{p+2}$ and $V(u)=\frac{1}{2}\int_{\mathbb{R}^2}u^2.$ 
We define $I(u)=\int_{\mathbb{R}^2}[u^2+(\partial_{x}^{2}u)^2+(\partial_{x}^{-1}\partial_{y}u)^2-u^{p+2}]dxdy.$

We have the following minimization formulation of the ground states: 
\begin{theorem}\label{minimization}[Theorem 2.4, \cite{EsfahaniLevandovsky}] 
The following are equivalent. 
\begin{itemize}
    \item[a)]  $u$ is a ground state, i.e. $u\in G_1$.
    \item[b)]  $I(u)=0$ and $d_1=L_{1}(u)=\inf\{L(w)|w\in E^2\setminus \{0\}, I(w)=0\}.$
\end{itemize}
\end{theorem}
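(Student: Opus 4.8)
The plan is to recognize that $I$ is exactly the Nehari functional attached to $L_1$, so that this is the classical ``ground state $=$ Nehari minimizer'' equivalence, and to prove the two implications around a single algebraic identity that pins down the Lagrange multiplier. First I would record the two identities that drive everything. Differentiating $L_1$ one finds $\langle L_1'(u),u\rangle=\int_{\mathbb{R}^2}[u^2+(\partial_x^2u)^2+(\partial_x^{-1}\partial_yu)^2-u^{p+2}]\,dxdy=I(u)$, so the constraint $I(u)=0$ is precisely the Nehari constraint $\langle L_1'(u),u\rangle=0$. Writing $K(u)=\int_{\mathbb{R}^2}[u^2+(\partial_x^2u)^2+(\partial_x^{-1}\partial_yu)^2]\,dxdy$, the constraint $I(u)=0$ reads $\int u^{p+2}=K(u)$, and substituting this into $L_1$ yields the reduced form $L_1(u)=\frac{p}{2(p+2)}K(u)$ on the set $\mathcal{N}=\{w\in E^2\setminus\{0\}:I(w)=0\}$. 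In particular $L_1$ is strictly positive on $\mathcal N$ and minimizing it there amounts to minimizing $\|u\|_{E^2}^2$ subject to the scaling-invariant constraint $\int u^{p+2}=K(u)$. I set $m=\inf_{\mathcal N}L_1$.

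For the implication (a)$\Rightarrow$(b): if $u\in G_1$ then $L_1'(u)=0$, hence $I(u)=\langle L_1'(u),u\rangle=0$ and $u\in\mathcal N$; since \emph{every} nontrivial critical point satisfies the same Nehari identity, we have $\Gamma_1\subseteq\mathcal N$ and therefore $m\leq\inf_{\Gamma_1}L_1=d_1=L_1(u)$. The content of (b) then reduces to proving the matching reverse inequality $m\geq d_1$, together with the fact that a minimizer for $m$ is a genuine solution of \eqref{groundstate}.

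Both of these follow from the observation that \emph{any} minimizer of $L_1$ over $\mathcal N$ is automatically a free critical point. By the Lagrange multiplier theorem there is $\lambda$ with $L_1'(u)=\lambda I'(u)$; pairing with $u$ and using $I(u)=0$ gives $0=\lambda\langle I'(u),u\rangle$. The key computation is $\langle I'(u),u\rangle=2K(u)-(p+2)\int u^{p+2}=-p\,K(u)$ on $\mathcal N$, which is strictly negative for $u\neq 0$ and every admissible $p>0$; hence $\lambda=0$ and $L_1'(u)=0$, so $u\in\Gamma_1$. This is the crucial non-degeneracy: $\langle I'(u),u\rangle$ never vanishes on $\mathcal N$, so the constraint manifold is smooth near $u$ and the multiplier is forced to be zero. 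A minimizer $u$ thus satisfies $L_1(u)=m$ with $u\in\Gamma_1$, whence $d_1=\inf_{\Gamma_1}L_1\leq m$; combined with $m\leq d_1$ this gives $m=d_1$ and shows $u\in G_1$. Running both implications with $m=d_1$ now closes the equivalence: (a)$\Rightarrow$(b) is finished since $L_1(u)=d_1=m$ for ground states, and conversely if $I(u)=0$ and $L_1(u)=d_1=m$ then $u$ is a Nehari minimizer, hence lies in $\Gamma_1$ and attains $\inf_{\Gamma_1}L_1$, i.e. $u\in G_1$.

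The main obstacle is not these algebraic identities but the \emph{existence} of a minimizer for $m$, equivalently the compactness of a minimizing sequence for $L_1$ on $\mathcal N$. I would handle this by concentration-compactness applied to the constrained minimization of $\|u\|_{E^2}^2$, ruling out vanishing and dichotomy with the sharp anisotropic Sobolev inequality of the following subsection and recovering mass lost along the non-compact directions using the $x$- and $y$-translation invariance of both $I$ and $L_1$. Alternatively, since $G_1\neq\emptyset$ is already known from (\cite{EsfahaniLevandovsky}, Theorem 2.2), one may simply take a known ground state as the element of $\mathcal N$ realizing the identity $m=d_1$ and thereby avoid re-proving existence altogether; the remaining work — the two identities and the vanishing of the multiplier — is then purely computational.
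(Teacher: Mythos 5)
A preliminary remark on the comparison: the paper does not prove Theorem \ref{minimization} at all --- it is imported verbatim as Theorem 2.4 of \cite{EsfahaniLevandovsky} --- so the relevant benchmark is the standard Nehari-manifold argument of that reference, whose skeleton you reproduce faithfully. Your algebra is correct throughout: $\langle L_1'(u),u\rangle=I(u)$, the reduced form $L_1(u)=\frac{p}{2(p+2)}\int[u^2+(\partial_x^2u)^2+(\partial_x^{-1}\partial_yu)^2]$ on $\mathcal N=\{I=0\}\setminus\{0\}$, the computation $\langle I'(u),u\rangle=-p\int[u^2+(\partial_x^2u)^2+(\partial_x^{-1}\partial_yu)^2]<0$ on $\mathcal N$, and the consequent vanishing of the Lagrange multiplier. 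In particular your (b)$\Rightarrow$(a) direction is complete as written: there the minimizer is $u$ itself by hypothesis, so no compactness is needed, and the non-degeneracy $\langle I'(u),u\rangle\neq 0$ is exactly what legitimizes the multiplier rule (together with the routine fact that $L_1,I\in C^1(E^2)$ via the anisotropic embedding $E^2\hookrightarrow L^{p+2}$, valid in the relevant range $0<p<4$). One cosmetic warning: your $K$ clashes with the paper's notation, which reserves $K$ for the different functional $\int[u^2+(\partial_x^2u)^2]-\frac{p+4}{2(p+2)}\int u^{p+2}$ of Lemma \ref{minimization2}; rename your quadratic form before splicing anything in.

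The genuine gap is in (a)$\Rightarrow$(b), at the inequality $m\geq d_1$. Your proposed shortcut --- ``take a known ground state as the element of $\mathcal N$ realizing $m=d_1$,'' citing only the nonemptiness $G_1\neq\emptyset$ from Theorem 2.2 of \cite{EsfahaniLevandovsky} --- proves the wrong inequality: exhibiting a ground state inside $\mathcal N$ with action $d_1$ yields $m\leq d_1$, which you already had from $\Gamma_1\subseteq\mathcal N$. What is needed is that \emph{every} $w\in\mathcal N$ satisfies $L_1(w)\geq d_1$, and bare nonemptiness of $G_1$ gives no control over arbitrary elements of $\mathcal N$. So the compactness step cannot be sidestepped this way: you must either (i) actually run the concentration-compactness argument you only gesture at, obtain a minimizer $u_{\min}$ for $m$, and feed it through your multiplier computation to conclude $u_{\min}\in\Gamma_1$, whence $d_1\leq L_1(u_{\min})=m$; or (ii) cite not merely $G_1\neq\emptyset$ but the stronger fact that in \cite{EsfahaniLevandovsky} the ground states are \emph{constructed} as minimizers of $L_1$ over $\{I=0\}$, which is precisely how their Theorem 2.4 is obtained --- note that the paper's own Lemma \ref{minimization2} cannot help here, since its scaling argument invokes Theorem \ref{minimization} and would be circular. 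With (i) or (ii) supplied, the rest of your write-up closes the equivalence correctly.
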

We denote $J(t)=\int_{\mathbb{R}^2}y^2u(x,y,t)^2dxdy.$ From \cite{Saut}, we have that if $u_0\in E^2$ with $yu_0\in L^{2}(\mathbb{R}^2)$, then for a solution of equation \eqref{fifthKPI}, $u(t)\in C([0,T],E^2)$ with initial data $u_0,$ we have $yu \in L^2(\mathbb{R}^2),$ hence $J(t)$ is well-defined. Drawing inspiration from the virial identity that first appeared in \cite{FalkovitchTuritsyn}, \cite{Saut}, we state the virial identity associated to \eqref{fifthKPI}: 
\begin{lemma}\label{virial}[Theorem 3.4, \cite{EsfahaniLevandovsky}]
We have 
$$\frac{1}{8}\frac{d^2}{dt^2}J(t)=\int_{\mathbb{R}^2}(\partial_{x}^{-1}\partial_{y}u)^2-\frac{p}{2(p+2)}\int_{\mathbb{R}^2}u^{p+2}.$$   
\end{lemma}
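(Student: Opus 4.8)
The plan is to differentiate $J$ twice in time, substituting the equation $u_t=\partial_x^5u+\partial_x^{-1}\partial_y^2u-\partial_x(u^{p+1})$ at each stage and repeatedly integrating by parts, keeping track of the fact that the weight $y^2$ is constant in $x$ but not in $y$. All the integrations by parts below are justified with no boundary contributions because $u(t)\in C([0,T],E^2)$ together with $yu\in L^2$ (from \cite{Saut}) provides enough decay and regularity; I regard verifying these integrability and decay hypotheses as the only genuinely analytic point, the rest being algebra.

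First I would compute $J'(t)=2\int y^2uu_t$. Since $y^2$ does not depend on $x$, the dispersive term $2\int y^2u\,\partial_x^5u$ vanishes (integration by parts in $x$, as $\int u\,\partial_x^5u\,dx=0$) and the nonlinear term $-2\int y^2u\,\partial_x(u^{p+1})=\tfrac{2}{p+2}\int y^2\partial_x(u^{p+2})$ vanishes as well. Only the transverse term survives: writing $g=\partial_x^{-1}\partial_yu$, so that $\partial_xg=\partial_yu$ and $\partial_x^{-1}\partial_y^2u=\partial_yg$, an integration by parts in $y$ gives $2\int y^2u\,\partial_yg=-4\int yug-2\int y^2\partial_yu\,g$, and the last integral vanishes because $\partial_yu\,g=\partial_xg\,g=\tfrac12\partial_x(g^2)$ integrates to zero in $x$. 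Thus $J'(t)=-4M(t)$ with $M=\int yu\,\partial_x^{-1}\partial_yu$.

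The second differentiation is where care is needed: by the product rule $M'=\int yu_t\,g+\int yu\,\partial_x^{-1}\partial_yu_t$, and the two terms are \emph{not} equal. Using the skew-adjointness of $\partial_x^{-1}$ in $x$ (its Fourier multiplier $1/(i\xi)$ is odd and purely imaginary) and then integrating by parts in $y$, the second term becomes $\int u_t\,\partial_x^{-1}u+\int yu_t\,\partial_x^{-1}\partial_yu$, the extra piece arising precisely because $y$ and $\partial_y$ do not commute. Hence $M'=2P+Q$ with $P=\int yu_t\,\partial_x^{-1}\partial_yu$ and the correction $Q=\int u_t\,\partial_x^{-1}u$. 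I would then evaluate both by substituting the equation. For $Q$, skew-adjointness and integration by parts give $\int\partial_x^5u\,\partial_x^{-1}u=-\int(\partial_x^2u)^2$, $\int\partial_x^{-1}\partial_y^2u\,\partial_x^{-1}u=-\int(\partial_x^{-1}\partial_yu)^2$, and $-\int\partial_x(u^{p+1})\,\partial_x^{-1}u=\int u^{p+2}$. For $P$, the nonlinear and transverse contributions give $-\tfrac1{p+2}\int u^{p+2}$ and $-\tfrac12\int(\partial_x^{-1}\partial_yu)^2$ respectively, while the dispersive contribution reduces, after skew-adjointness, to $-\int y\,\partial_x^4u\,\partial_yu$, which I would evaluate by a symmetrization trick: one integration by parts in $y$ followed by four in $x$ shows this integral equals its own negative up to $-\int(\partial_x^2u)^2$, forcing it to equal $-\tfrac12\int(\partial_x^2u)^2$, so the dispersive part of $P$ is $+\tfrac12\int(\partial_x^2u)^2$.

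The crucial check is then the cancellation: in $M'=2P+Q$ the terms $\int(\partial_x^2u)^2$ — appearing as $+\int(\partial_x^2u)^2$ from $2P$ and $-\int(\partial_x^2u)^2$ from $Q$ — cancel exactly, which is the signature that $J$ obeys a genuine virial law. What remains is $M'=-2\int(\partial_x^{-1}\partial_yu)^2+\tfrac{p}{p+2}\int u^{p+2}$, and since $J''=-4M'$ this yields $\tfrac18J''=\int(\partial_x^{-1}\partial_yu)^2-\tfrac{p}{2(p+2)}\int u^{p+2}$, as claimed. The main obstacle, beyond the bookkeeping, is the appearance and correct handling of the correction term $Q$: a naive product-rule argument assuming the two halves of $M'$ coincide would produce the wrong constant, and it is only after combining $Q$ with $2P$ that the $(\partial_x^2u)^2$ contributions disappear.
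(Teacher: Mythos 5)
Your computation is correct, and it is in substance the proof the paper relies on: the paper states this lemma without proof, citing Theorem 3.4 of \cite{EsfahaniLevandovsky}, where the identity is established by exactly this kind of direct virial computation (differentiate $J$ twice, substitute the equation, integrate by parts, with the cancellation of the $\int(\partial_x^2u)^2$ terms), and your bookkeeping — including the correction term $Q=\int u_t\,\partial_x^{-1}u$ from the noncommutativity of $y$ and $\partial_y$, and the symmetrization giving $\int y\,\partial_x^4u\,\partial_yu=-\tfrac12\int(\partial_x^2u)^2$ — checks out. The only caveat worth recording is that $\partial_x^{-1}u$ itself need not lie in $L^2$ for general $u\in E^2$, so the intermediate quantity $Q$ should be interpreted after the integrations by parts that eliminate the raw antiderivative (or the whole computation carried out for regularized solutions and passed to the limit), which is consistent with your remark that the analytic justification is the only genuinely delicate point.
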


We now prove a minimization property that resembles Lemma 3.2 from \cite{Esfahani}: 
\begin{lemma} \label{minimization2}
    For $u \in E^2$, we define 
    $$K(u)=\int_{\mathbb{R}^2}[u^2+(\partial_{x}^{2}u)^2]-\frac{p+4}{2(p+2)}\int_{\mathbb{R}^2}u^{p+2},$$
    and thus $\mathcal{D}=\{u\in E^2\setminus\{0\},\mbox{ } I(u)<0,\mbox{ }K(u)=0\}.$
    Then we have $\inf_{w \in \mathcal{D}} L_{1}(w)\geq d_1.$ Moreover, if $\varphi \in G_1,$ then $K(\varphi)=0.$ 
\end{lemma}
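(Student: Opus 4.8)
The plan is to treat the two assertions separately, beginning with the easier ``moreover'' claim. For $\varphi\in G_1$ the Pohozaev identities of Lemma \ref{pohozaev} give $\int(\partial_x^2\varphi)^2=\tfrac{p}{4}\int\varphi^2$ and $\int\varphi^{p+2}=\tfrac{2(p+2)}{p}\int(\partial_x^2\varphi)^2$. Substituting these into the definition of $K$ collapses the three terms: writing $b=\int(\partial_x^2\varphi)^2$ one finds $K(\varphi)=\tfrac{4}{p}b+b-\tfrac{p+4}{2(p+2)}\cdot\tfrac{2(p+2)}{p}b=\tfrac{4+p-(p+4)}{p}\,b=0$. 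This also shows that $\mathcal{D}$ is a meaningful constraint set, since every ground state satisfies $K=0$ while sitting on the boundary $I=0$ of the region $\{I<0\}$.

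For the main inequality the key device is an anisotropic rescaling in $y$ that leaves $\int(\partial_x^{-1}\partial_y w)^2$ invariant. Writing $a=\int w^2$, $b=\int(\partial_x^2 w)^2$, $c=\int(\partial_x^{-1}\partial_y w)^2$, $e=\int w^{p+2}$, I set $w^\lambda(x,y)=\lambda^{1/2}w(x,y/\lambda)$ for $\lambda>0$. A change of variables gives $\int(w^\lambda)^2=\lambda^2 a$, $\int(\partial_x^2 w^\lambda)^2=\lambda^2 b$, $\int(\partial_x^{-1}\partial_y w^\lambda)^2=c$ (invariant, by design) and $\int(w^\lambda)^{p+2}=\lambda^{(p+4)/2}e$, so that
$$L_1(w^\lambda)=\tfrac12(a+b)\lambda^2+\tfrac12 c-\tfrac{1}{p+2}e\,\lambda^{(p+4)/2},\qquad I(w^\lambda)=(a+b)\lambda^2+c-e\,\lambda^{(p+4)/2}.$$
The point of this choice is that $\frac{d}{d\lambda}L_1(w^\lambda)\big|_{\lambda=1}=(a+b)-\tfrac{p+4}{2(p+2)}e=K(w)$, so the constraint $K(w)=0$ says exactly that $\lambda=1$ is a critical point of $\lambda\mapsto L_1(w^\lambda)$.

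Now take any $w\in\mathcal{D}$. Since $I(w)<0$ forces $e>a+b+c\geq 0$, in particular $e>0$, and $a+b\geq a>0$ because $w\neq 0$. Using $K(w)=0$, i.e. $a+b=\tfrac{p+4}{2(p+2)}e$, the derivative simplifies to
$$\frac{d}{d\lambda}L_1(w^\lambda)=\lambda\Big[(a+b)-\tfrac{p+4}{2(p+2)}e\,\lambda^{p/2}\Big]=\lambda\,\tfrac{p+4}{2(p+2)}e\,(1-\lambda^{p/2}),$$
which is strictly positive for $\lambda\in(0,1)$; hence $\lambda\mapsto L_1(w^\lambda)$ is strictly increasing on $(0,1]$. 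On the other hand $I(w^\lambda)\geq\lambda^2\big((a+b)-e\lambda^{p/2}\big)>0$ for all sufficiently small $\lambda>0$ (the $\lambda^2$ term dominates since $(p+4)/2>2$), while $I(w^1)=I(w)<0$; by the intermediate value theorem there is $\lambda_0\in(0,1)$ with $I(w^{\lambda_0})=0$. Since $w^{\lambda_0}\in E^2\setminus\{0\}$ lies on $\{I=0\}$, the variational characterization in Theorem \ref{minimization} gives $L_1(w^{\lambda_0})\geq d_1$, and combining with monotonicity, $L_1(w)=L_1(w^1)\geq L_1(w^{\lambda_0})\geq d_1$. Taking the infimum over $w\in\mathcal{D}$ finishes the proof.

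The main obstacle is identifying the correct one-parameter scaling: it must leave $\int(\partial_x^{-1}\partial_y w)^2$ untouched, so that the residual $\lambda$-dependence involves only the combination $a+b$ and $e$ appearing in $K$, and it must simultaneously realize $K(w)$ as $\frac{d}{d\lambda}L_1(w^\lambda)\big|_{\lambda=1}$; these two requirements force the exponents $\tfrac12$ and $-1$ in $w^\lambda(x,y)=\lambda^{1/2}w(x,y/\lambda)$. Once the scaling is fixed, the remainder is elementary calculus (monotonicity plus the intermediate value theorem), and the only mild care needed is the degenerate case $c=0$, which is handled by the same small-$\lambda$ dominance estimate.
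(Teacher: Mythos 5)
Your proof is correct and essentially the same as the paper's: your scaling $w^\lambda(x,y)=\lambda^{1/2}w(x,y/\lambda)$ is exactly the paper's $u_\mu(x,y)=\mu\, u(x,\mu^{-2}y)$ under the reparametrization $\mu=\lambda^2$, and both arguments rest on the invariance of $\int(\partial_x^{-1}\partial_y w)^2$ along this scaling, the intermediate value theorem to find $\lambda_0\in(0,1)$ with $I(w^{\lambda_0})=0$, monotonicity of $L_1(w^\lambda)$ under the constraint $K(w)=0$ (you via the sign of the derivative, the paper via the explicit difference $f(\lambda)$), and finally Theorem \ref{minimization}, with the Pohozaev identities of Lemma \ref{pohozaev} giving $K(\varphi)=0$ in both. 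The only marginal difference is that your small-$\lambda$ dominance bound $I(w^\lambda)\geq \lambda^2\big((a+b)-e\lambda^{p/2}\big)>0$ makes the positivity of $I(w^\lambda)$ near $\lambda=0$ automatic, whereas the paper secures it by invoking the anisotropic inequality \eqref{anisotropic} to guarantee $\int(\partial_x^{-1}\partial_y w)^2>0$.
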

\begin{proof}
    For any $u\in \mathcal{D},$ we will find $w\in E^2$ with $I(w)=0$ such that $L_1(u)\geq L_1(w).$ By Theorem \ref{minimization}, it will imply the conclusion of the Lemma. We take now any $u \in \mathcal{D}$, hence $I(u)<0, K(u)=0.$ For $\lambda>0,$ we denote $u_{\lambda}(x,y)=\lambda u(x,\lambda^{-2}y).$ We notice that 
    $$I(u_{\lambda})=\lambda^4\int_{\mathbb{R}^2}[u^2+(\partial_{xx}u)^2]+\int_{\mathbb{R}^2}(\partial_{x}^{-1}\partial_{y}u)^2-\lambda^{p+4}\int_{\mathbb{R}^2}u^{p+2}.$$
    Hence, as $\lambda\rightarrow 1^{-},$ then $I(u_{\lambda})\rightarrow I(u)<0$ and as $\lambda \rightarrow 0^{+},$ then $I(u_{\lambda})\rightarrow \int_{\mathbb{R}^2}(\partial_{x}^{-1}\partial_{y}u)^2>0$ (we observe that $\int_{\mathbb{R}^2}(\partial_{x}^{-1}\partial_{y}u)^2>0,$ by \eqref{anisotropic} and the fact that $u\not\equiv 0$). By continuity, we have that there exists $\rho \in (0,1),$ such that $I(u_{\rho})=0.$  
    We see that 
    $$L_1(u_{\lambda})=\frac{\lambda^4}{2}\int_{\mathbb{R}^2}[u^2+(\partial_{x}^{2}u)^2]+\frac{1}{2}\int_{\mathbb{R}^2}(\partial_{x}^{-1}\partial_{y}u)^{2}-\frac{\lambda^{p+4}}{p+2}\int_{\mathbb{R}^2}u^{p+2},$$
    hence $$L_1(u)-L_1(u_{\lambda})=\frac{1-\lambda^4}{2}\int_{\mathbb{R}^2}[u^2+(\partial_{x}^{2}u)^2]-\frac{1-\lambda^{p+4}}{p+2}\int_{\mathbb{R}^2}u^{p+2}.$$
    Using $K(u)=0,$ we obtain that 
    $$L_1(u)-L_1(u_{\lambda})=f(\lambda)\int_{\mathbb{R}^2}u^2+g(\lambda)\int_{\mathbb{R}^2}(\partial_{x}^{2}u)^2$$
    where $f(\lambda)=g(\lambda)=\frac{1}{2}(1-\lambda^{4})-\frac{2}{p+4}(1-\lambda^{p+4}).$ We observe that $f(0)>0, f(1)=0$ and $f'(\lambda)=2(\lambda^p-1)\lambda^3<0$ for $0<\lambda<1,$ thus $f(\lambda)>0$ for $\lambda \in (0,1).$ 
    We conclude that $L_1(u)>L_1(u_{\lambda})$ for $\lambda \in (0,1),$ in particular $L_1(u)>L_1(u_{\rho}).$
    Moreover, from Lemma \ref{pohozaev} we get that $K(\varphi)=0$ for $\varphi \in G_1.$
\end{proof}

    We now define an invariant set that will help us prove transverse blow-up. Denote 
    $\mathcal{J}=\{ u\in X_s: L_1(u)<d_1, I(u)<0, K(u)>0\}.$
    \begin{lemma} \label{invariance}
        Let $u_0 \in X_s.$ Then if $u(t)\in C([0,T), X_s)$ is the solution of \eqref{fifthKPI} with initial data $u_0$, with $V(u(t))=V(u_0)$ and $E(u(t))=E(u_0),$ for $0\leq t<T.$ Then, if $u_0 \in \mathcal{J},$ we have $u(t)\in \mathcal{J}$ for $0\leq t<T.$     
    \end{lemma}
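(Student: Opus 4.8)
The plan is to run a continuity (first-exit-time) argument: use the conservation laws to keep $L_1$ below $d_1$ along the flow, and then invoke the two minimization characterizations already established (Theorem \ref{minimization} and Lemma \ref{minimization2}) to rule out the solution ever reaching the boundary of $\mathcal{J}$.

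First I would record the conserved quantities. Since $V(u(t))=V(u_0)$ and $E(u(t))=E(u_0)$ by hypothesis, we have $L_1(u(t))=E(u(t))+V(u(t))=L_1(u_0)<d_1$ for all $t\in[0,T)$; thus the first defining inequality of $\mathcal{J}$ holds automatically for the whole lifespan. I would also note that $u_0\neq 0$: the condition $I(u_0)<0$ forces $\int_{\mathbb{R}^2}u_0^{p+2}>\int_{\mathbb{R}^2}[u_0^2+(\partial_x^2u_0)^2+(\partial_x^{-1}\partial_y u_0)^2]\geq 0$, so $u_0\not\equiv 0$ and hence $V(u_0)>0$. Conservation of mass then gives $V(u(t))=V(u_0)>0$, so $u(t)\neq 0$ throughout $[0,T)$.

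Next I would set up the topology. Each of the maps $t\mapsto I(u(t))$ and $t\mapsto K(u(t))$ is continuous on $[0,T)$, because $I$ and $K$ are continuous functionals on $E^2$ (the quadratic terms are manifestly continuous, and the term $\int_{\mathbb{R}^2} u^{p+2}$ is controlled by the $E^2$-norm through the anisotropic Sobolev inequality \eqref{anisotropic}), while $u\in C([0,T),X_s)\subset C([0,T),E^2)$. Consequently $\mathcal{J}$ is relatively open and, since $u_0\in\mathcal{J}$, the set $\{t:u(t)\in\mathcal{J}\}$ contains a neighborhood of $0$. Suppose, for contradiction, that the solution leaves $\mathcal{J}$, and let $t_0=\inf\{t\in[0,T):u(t)\notin\mathcal{J}\}$. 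Then $t_0>0$, $u(t)\in\mathcal{J}$ for $t<t_0$, and by continuity $u(t_0)$ lies on $\partial\mathcal{J}$. Because $L_1(u(t_0))<d_1$ strictly, the boundary must be reached through one of the two remaining constraints: either $I(u(t_0))=0$ or $K(u(t_0))=0$.

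Finally I would derive the contradiction in each case. If $I(u(t_0))=0$, then since $u(t_0)\neq 0$, Theorem \ref{minimization} gives $L_1(u(t_0))\geq d_1$, contradicting $L_1(u(t_0))<d_1$. If instead $K(u(t_0))=0$ with $I(u(t_0))<0$ (the remaining possibility, the case $I(u(t_0))=0$ having already been handled), then $u(t_0)\in\mathcal{D}$ in the notation of Lemma \ref{minimization2}, whence $L_1(u(t_0))\geq\inf_{w\in\mathcal{D}}L_1(w)\geq d_1$, again a contradiction. Hence no exit time exists and $u(t)\in\mathcal{J}$ for all $t\in[0,T)$. The only delicate point — and the main obstacle — is the boundary analysis at $t_0$: one must confirm that reaching $\partial\mathcal{J}$ genuinely forces one of the equalities $I=0$ or $K=0$ (rather than merely $L_1=d_1$, which conservation excludes), and that the limiting profile $u(t_0)$ is nonzero so that the minimization characterizations apply; both facts follow from the mass lower bound and the strict conservation of $L_1$ established above.
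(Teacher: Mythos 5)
Your proof is correct and follows essentially the same route as the paper: conservation of $L_1$ keeps $L_1(u(t))<d_1$, continuity of $t\mapsto I(u(t))$ and $t\mapsto K(u(t))$ forces any exit from $\mathcal{J}$ to pass through $I=0$ or $K=0$, and Theorem \ref{minimization} resp.\ Lemma \ref{minimization2} then yield $L_1\geq d_1$, a contradiction. Your first-exit-time formulation is in fact a bit more careful than the paper's two separate contradiction arguments, since you explicitly verify $u(t_0)\neq 0$ via mass conservation and note that $I(u(t_0))<0$ must hold in the $K=0$ case --- points the paper leaves implicit.
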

    \begin{proof}
        Let $u_0 \in \mathcal{J}.$ By the conservation laws, we get that $L_1(u(t))=L_1(u_0)<d_1.$ Suppose, by contradiction, that there exists $t_0>0$ such that $I(u(t_0))>0.$ By continuity, we get that there exists $t_1$ such that $I(u(t_1))=0.$ By Theorem \ref{minimization}, we obtain that $L_1(u(t_1))\geq d_1,$ contradiction. 

    Again, by contradiction, suppose that there exists $t_0>0$ such that $K(u(t_0))<0.$ By continuity, we get that there exists $t_1$ such that $K(u(t_1))=0.$ By Lemma \ref{minimization2}, we obtain that $L_1(u(t_1)\geq d_1,$ contradiction.   
    \end{proof}
        We now prove the blow-up criterion for a certain set of initial data.
    \begin{lemma}\label{blowup}
        Let $u(t)$ be a solution of \eqref{fifthKPI}, with $p>0, p=\frac{m}{n}$ and $m$ is even, $n$ is odd, in $C([0,T),X_s)$ for $s\geq 5,$ having initial data $u_0\in X_s$ with $u_0\in \mathcal{J}$ and $yu_0 \in L^2(\mathbb{R}^2).$ Then there exists a blow-up time $T_0<\infty$ such that 
        $$\lim_{t\rightarrow T_{0}^{-}}\|\partial_{y}u\|_{L^{2}(\mathbb{R}^2)}=\infty.$$
    \end{lemma}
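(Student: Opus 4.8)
The plan is to establish a convexity/blow-up argument via the virial identity in Lemma \ref{virial}, showing that the quantity $J(t)=\int_{\mathbb{R}^2}y^2u^2\,dxdy$ is a nonnegative function whose second derivative is bounded above by a strictly negative constant, forcing $J$ to become negative in finite time—an impossibility—so the solution cannot persist past some finite $T_0$. The heart of the matter is to use the invariance established in Lemma \ref{invariance} to control the sign and size of the right-hand side of the virial identity along the flow.

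First I would recall from Lemma \ref{virial} that
\begin{equation}\label{virialrecall}
\frac{1}{8}\frac{d^2}{dt^2}J(t)=\int_{\mathbb{R}^2}(\partial_{x}^{-1}\partial_{y}u)^2-\frac{p}{2(p+2)}\int_{\mathbb{R}^2}u^{p+2}=Q(u(t)),
\end{equation}
where $Q$ is exactly the functional appearing in the earlier blow-up proposition. The key step is to show that for $u_0\in\mathcal{J}$ there is a constant $\delta>0$ with $Q(u(t))\leq-\delta$ for all $t\in[0,T)$. To see this I would combine the conserved quantity $L_1(u(t))=L_1(u_0)<d_1$ with the constraints $I(u(t))<0$ and $K(u(t))>0$ guaranteed by Lemma \ref{invariance}. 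The strategy is to express $Q$ as a linear combination of $L_1$, $I$, and $K$ (or of $L_1$ and the constraint functionals), exploiting that on the set where $I<0$ and $K>0$ one can bound $Q$ away from zero by a multiple of $d_1-L_1(u_0)$. Concretely, using the definitions of $E$, $I$, $K$ and the Pohozaev relations in Lemma \ref{pohozaev}, I would derive an algebraic identity of the form $Q(u)=aL_1(u)+bI(u)+cK(u)+(\text{nonnegative terms})$ with coefficients chosen so that the $L_1$ term contributes $\leq a\,d_1$ while the $I$ and $K$ terms have a definite sign; the gap $d_1-L_1(u_0)>0$ then yields the uniform upper bound $Q(u(t))\leq-\delta$.

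Granting this, \eqref{virialrecall} gives $\frac{d^2}{dt^2}J(t)\leq-8\delta<0$, so $J(t)\leq J(0)+J'(0)t-4\delta t^2$, and since $J(t)\geq0$ the solution must cease to exist at some finite $T_0$. The final step is to convert blow-up of $J$ into blow-up of $\|\partial_y u\|_{L^2}$: I would use the conservation of $\|u\|_{L^2}$ together with a Cauchy–Schwarz / uncertainty-principle estimate relating $J(t)=\|yu\|_{L^2}^2$, $\|u\|_{L^2}^2$, and $\|\partial_y u\|_{L^2}^2$ (integrating by parts in $y$ and applying Cauchy–Schwarz gives $\|u\|_{L^2}^2\lesssim\|yu\|_{L^2}\|\partial_y u\|_{L^2}$), so that a finite-time failure of the solution together with the differential inequality forces $\|\partial_y u\|_{L^2}\to\infty$.

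The main obstacle I anticipate is the second step: producing the clean algebraic decomposition that bounds $Q(u(t))$ strictly below zero uniformly in $t$. This requires carefully choosing how to split the $u^{p+2}$ nonlinearity among the functionals $L_1$, $I$, and $K$ so that the constraints $I<0$, $K>0$ both push $Q$ in the negative direction while the $L_1<d_1$ estimate closes the gap; getting the coefficients and signs to line up—especially verifying that the leftover terms are genuinely nonnegative for the admissible range of $p$—is the delicate part, and is precisely where the scaling structure encoded in Lemma \ref{minimization2} and the Pohozaev identities must be invoked.
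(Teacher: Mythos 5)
Your skeleton coincides with the paper's proof --- virial identity, invariance of $\mathcal{J}$, a uniform negative upper bound on $\frac{d^2}{dt^2}J$, concavity of $J\geq 0$, and the uncertainty-type inequality $\|u\|_{L^2}^2\lesssim \|yu\|_{L^2}\|\partial_y u\|_{L^2}$ at the end (where, incidentally, your $J(t)^{1/2}$ is the correct power) --- but the one step carrying all the weight, the uniform bound $Q(u(t))\leq -\delta$, is precisely the step you leave unproved, and the mechanism you propose for it cannot work as stated. First, no decomposition is needed for the sign: one has the exact identity $Q=I-K$, so Lemma \ref{invariance} gives $Q(u(t))<0$ immediately. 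The real issue is uniformity, and a pointwise algebraic identity of the form $Q=aL_1+bI+cK+(\text{nonnegative terms})$ can never produce it: $L_1$, $I$, $K$, $Q$ are all integrals of $u$, whereas $d_1$ is a \emph{variational} constant, the infimum of $L_1$ over the constraint set $\{I=0\}$ (Theorem \ref{minimization}). Any estimate of the form $Q(u)\leq -c\,(d_1-L_1(u_0))$ must therefore pass through that variational characterization, and it cannot be applied to $u(t)$ itself, which satisfies $I(u(t))<0$ rather than $I=0$. Likewise, the Pohozaev identities of Lemma \ref{pohozaev} play no role here, and Lemma \ref{minimization2} enters only upstream, in proving that the condition $K>0$ is preserved by the flow (Lemma \ref{invariance}).

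The missing idea is a rescaling of $u(t)$ onto the constraint set at each fixed time: set $u_\lambda(x,y)=\lambda^2u(\lambda x,\lambda^3y)$, which preserves $\|u\|_{L^2}$, scales the two quadratic gradient terms by $\lambda^4$ and $\int u^{p+2}$ by $\lambda^{2p}$. Since $I(u_\lambda)\to\|u(t)\|_{L^2}^2>0$ as $\lambda\to 0^+$ and $I(u_\lambda)\to I(u(t))<0$ as $\lambda\to 1$, there is $\mu\in(0,1)$ with $I(u_\mu)=0$, whence $L_1(u_\mu)\geq d_1$ by Theorem \ref{minimization}. The elementary comparison
\begin{equation*}
L_1(u)-L_1(u_\mu)=\frac{1-\mu^4}{2}\int_{\mathbb{R}^2}\bigl[(\partial_x^2u)^2+(\partial_x^{-1}\partial_yu)^2\bigr]-\frac{1-\mu^{2p}}{p+2}\int_{\mathbb{R}^2}u^{p+2}\geq \frac{1}{2}\bigl[I(u)-I(u_\mu)\bigr]=\frac{1}{2}I(u)
\end{equation*}
--- valid because $\int u^{p+2}\geq 0$ (this is exactly where the hypothesis that $m$ is even and $n$ odd enters, a point your proposal never uses) and $\frac{1}{p+2}\leq\frac{1}{2}$ --- combined with conservation $L_1(u(t))=L_1(u_0)$ yields $I(u(t))\leq 2(L_1(u_0)-d_1)<0$ uniformly in $t$; then $K(u(t))>0$ gives $\frac{1}{8}\frac{d^2}{dt^2}J=I-K<I\leq -2\delta$ with $\delta=d_1-L_1(u_0)$. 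With this step supplied, your concavity and uncertainty-principle conclusions go through exactly as in the paper.
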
 
    \begin{proof}
        From Lemma \ref{virial} and Lemma \ref{invariance}, we observe that 
        $$\frac{1}{8}\frac{d^2}{dt^2}J(t)=-K(u(t))+I(u(t))<0.$$
        Denote, for any $\lambda>0,$ $u_{\lambda}(x,y)=\lambda^2u(\lambda x, \lambda^3y),$ and we notice that  
        $$I(u_{\lambda})\rightarrow \|u(t)\|_{L^2}^2=\|u_0\|_{L^2}^2>0, \mbox{ as }\lambda\rightarrow 0,$$
        and 
        $$I(u_{\lambda})\rightarrow I(u)<0, \mbox{ as }\lambda\rightarrow 1.$$
        Therefore there exists $\mu \in (0,1)$ such that $I(u_{\mu})=0.$
        By simple computations, we have 
        $$L_1(u)-L_1(u_{\mu})=\frac{1-\mu^4}{2}\int_{\mathbb{R}^2}(\partial_{x}^{2}u)^2+(\partial_{x}^{-1}\partial_{y}u)^2-\frac{1-\mu^{2p}}{p+2}\int_{\mathbb{R}^2}u^{p+2}$$
        $$\geq \frac{1-\mu^4}{2}\int_{\mathbb{R}^2}(\partial_{x}^{2}u)^2+(\partial_{x}^{-1}\partial_{y}u)^2-\frac{1-\mu^{2p}}{2}\int_{\mathbb{R}^2}u^{p+2}=I(u)-I(u_{\mu})=I(u).$$ 
        By conservation laws we have that $L_1(u(t))=L_1(u_0)$ and by Theorem \ref{minimization} since $I(u_{\mu})=0,$ it implies that $L_1(u_{\mu})\geq d_1.$ Thus, 
        $$0>L_1(u_0)-d_1>I(u(t))>\frac{d^2}{dt^2}J(t),$$ 
        which implies that there exists $T_0<\infty$ such that $\lim_{t\rightarrow T_{0}^{-}}J(t)=0.$ By the Weyl-Heisenberg inequality, i.e. 
        $$\|u_0\|_{L^2}^2=\|u(t)\|_{L^2}^2\lesssim J(t)\|\partial_{y}u(t)\|_{L^2},$$
        it yields $\lim_{t\rightarrow T_{0}^{-}}\|\partial_{y}u(t)\|_{L^2}=\infty.$
    \end{proof}
    \begin{corollary}
        The equation \eqref{fifthKPI2} is locally well-posed in $\tilde{E}^2=\{u_0\in E^2: \partial_{y}u_0 \in L^2(\mathbb{R}^2)\}.$ If $\int_{\mathbb{R}^2}y^2u_{0}^{2}<\infty,u_0\in \mathcal{J}$, then solution $u(t)$ of equation \eqref{fifthKPI2} with initial data $u_0$ blows up in finite time. 
    \end{corollary}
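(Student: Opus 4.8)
The plan is to split the corollary into its two assertions and reduce each to machinery already in place. For the local well-posedness in $\tilde{E}^2$, I would first invoke Theorem \ref{mainthm}, which already furnishes a local solution $u\in C([0,T],E^2)$ since $2>\frac32$, together with the a priori control of $\|u\|_{L^{2}_{T}L^{\infty}_{xy}}+\|\partial_x u\|_{L^{2}_{T}L^{\infty}_{xy}}$ from Proposition \ref{welposedness}. It then remains only to propagate the extra regularity $\partial_y u\in L^2$. Setting $v=\partial_y u$ and differentiating \eqref{fifthKPI2} in $y$ yields the linear equation $\partial_t v-\partial_{x}^{5}v-\partial_{x}^{-1}\partial_{y}^{2}v+\partial_x(3u^2 v)=0$. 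Pairing with $v$ in $L^2$, the dispersive terms $\int v\,\partial_{x}^{5}v$ and $\int v\,\partial_{x}^{-1}\partial_{y}^{2}v$ vanish by skew-adjointness (the latter is checked on the Fourier side, the integrand $\tfrac{i\mu^2}{\xi}|\hat v|^2$ being odd under $(\xi,\mu)\mapsto(-\xi,-\mu)$), while the nonlinear term integrates by parts to $-3\int u\,\partial_x u\,v^2$. This gives $\frac{d}{dt}\|\partial_y u\|_{L^2}^2\lesssim(\|u\|_{L^{\infty}_{xy}}^2+\|\partial_x u\|_{L^{\infty}_{xy}}^2)\|\partial_y u\|_{L^2}^2$, exactly the structure of Lemma \ref{energyestimates}. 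Gronwall's inequality together with the finiteness of $\|u\|_{L^{2}_{T}L^{\infty}_{xy}}^2+\|\partial_x u\|_{L^{2}_{T}L^{\infty}_{xy}}^2$ from Proposition \ref{welposedness} then bounds $\sup_{[0,T]}\|\partial_y u\|_{L^2}$, so $u\in C([0,T],\tilde{E}^2)$, the flow map being continuous by the same Bona--Smith argument as before. As usual the identity is first carried out on the parabolically regularized solutions and then passed to the limit.

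For the blow-up, I would run the virial argument of Lemma \ref{blowup}, but now for data only in $\tilde{E}^2$. Let $u_0\in\tilde{E}^2$ with $\int y^2 u_0^2<\infty$ and $u_0\in\mathcal{J}$, and let $u\in C([0,T_{\max}),\tilde{E}^2)$ be the maximal solution from the first part. Since $yu_0\in L^2$, the result of \cite{Saut} keeps $yu(t)\in L^2$, so $J(t)=\int_{\mathbb{R}^2}y^2 u^2$ is well-defined and, by Lemma \ref{virial}, twice differentiable with $\tfrac18 J''(t)=-K(u(t))+I(u(t))$. The conservation of $M$ and $E$, hence of $L_1$, holds on the energy space $E^2$, and every functional involved ($L_1,I,K$) is well-defined and continuous on $E^2$ through the anisotropic Sobolev inequality \eqref{anisotropic}; this is what lets the invariance Lemma \ref{invariance} and the scaling comparison of Lemma \ref{blowup} apply to the $\tilde{E}^2$ solution. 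In particular $u(t)\in\mathcal{J}$ for all $t$, so $K(u(t))>0$, and using the scaling $u_\mu(x,y)=\mu^2 u(\mu x,\mu^3 y)$, which produces a zero $I(u_\mu)=0$ at some $\mu\in(0,1)$ with $L_1(u_\mu)\geq d_1$ by Theorem \ref{minimization}, one obtains $I(u(t))\leq L_1(u_0)-d_1<0$.

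Combining these yields the quantitative concavity $\tfrac18 J''(t)=-K(u(t))+I(u(t))<I(u(t))\leq L_1(u_0)-d_1<0$, i.e. $J''\leq 8(L_1(u_0)-d_1)<0$ uniformly in $t$. Since $J\geq0$, this downward parabola bound forces $J$ to reach $0$ at some finite $T_0\leq T_{\max}$, and the Weyl--Heisenberg uncertainty inequality $\|u_0\|_{L^2}^2=\|u(t)\|_{L^2}^2\lesssim J(t)^{1/2}\|\partial_y u(t)\|_{L^2}$ then forces $\|\partial_y u(t)\|_{L^2}\to\infty$ as $t\to T_0^-$. Because $\|\partial_y u\|_{L^2}$ is part of the $\tilde{E}^2$ norm, this is exactly finite-time blow-up in $\tilde{E}^2$.

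The main obstacle is bookkeeping the function spaces rather than any genuinely new estimate: Lemma \ref{blowup}, the invariance Lemma \ref{invariance} and the minimization Lemmas \ref{minimization2} and \ref{minimization} are all stated for $X_s$ with $s\geq5$, whereas the corollary lives in the much rougher space $\tilde{E}^2$. The resolution, which I would state carefully, is that the virial identity, the conservation laws and all energy functionals extend continuously from $X_s$ to $E^2$ via \eqref{anisotropic}, so each step transfers either directly or by approximating $u_0$ in $\tilde{E}^2$ by $X_s$ data and passing to the limit through the continuity of the flow established in the first part. A secondary point to treat with care is the rigorous justification of the $\partial_y u$ energy identity, which, as above, is performed at the level of the regularized equation before taking limits.
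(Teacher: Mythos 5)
Your proposal is correct and follows essentially the same route as the paper: local well-posedness via the $\partial_y$ energy identity $\frac{d}{dt}\|\partial_y u\|_{L^2}^2\lesssim(\|u\|_{L^\infty_{xy}}^2+\|\partial_x u\|_{L^\infty_{xy}}^2)\|\partial_y u\|_{L^2}^2$ combined with the scheme of Proposition \ref{welposedness}, then blow-up via Saut's persistence of $yu(t)\in L^2$, the invariance of $\mathcal{J}$ (Lemma \ref{invariance}), and the virial/concavity mechanism of Lemma \ref{blowup}, which you unpack rather than cite. Your explicit treatment of the $X_s$-versus-$\tilde{E}^2$ bookkeeping (extending $L_1$, $I$, $K$ continuously to $E^2$ via \eqref{anisotropic} and approximating) is a point the paper passes over silently, and is a welcome addition rather than a deviation.
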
 
    \begin{proof}
        The local-wellposedness follows from adapting the proof of Theorem \ref{mainthm}, by noting 
        the energy estimate 
        $$\partial_{t}\|\partial_{y}u(t)\|_{L^{2}_{xy}}\leq p\|u(t)\|_{L^{\infty}_{xy}}\|\partial_{x}u(t)\|_{L^{\infty}_{xy}}\|\partial_{y}u(t)\|_{L^{2}_{xy}}$$
        by applying $\partial_{y}$ of \eqref{fifthKPI2} and then multiplying it by $\partial_{y}u.$   Then we replicate the proof of Proposition \ref{welposedness} to conclude the local-wellposedness result.

        We observe that $yu_0 \in L^2(\mathbb{R}^2)$ implies $yu(t)\in L^2(\mathbb{R}^2)$ for all times of existence from Theorem 3.3 in \cite{Saut}, with $\|yu(t)\|_{L^2(\mathbb{R}^2)}\leq  C(T, \|yu_0\|_{L^2(\mathbb{R}^2)}, \|u_0\|_{E^2}),$ for all $t\in [0,T].$ Also, $u_0 \in \mathcal{J}$ implies $u(t)\in \mathcal{J}$ for all times of existence from Lemma \ref{invariance}.
        
        Then we have from Lemma \ref{blowup}, that there exists $T<\infty$ such that 
        $$\lim_{t\rightarrow T^{-}}\|u(t)\|_{\tilde{E}^2}=\infty.$$ 
        In fact, if $u\in \mathcal{J},$ then $\|u(t)\|_{E^2}\leq \sqrt{6}d_1,$ which implies that the blow-up is given by the transverse effects of the equation. 
    \end{proof}
    As a consequence of Lemma \ref{blowup}, we can state a strong instability generalization for the ground states in $G_1.$
    \begin{definition}\label{stronginstability}
        We say that the solitary wave $\phi$ is strongly unstable if for any $\delta>0$  there exists $u_0\in X_s(s\geq 5)$, close to $\phi$ in $E^2$ with $\|u_0-\phi\|_{E^2} <\delta$, such that the solution $u(t)$ of \eqref{fifthKPI} with initial data $u(0)=u_0$ blows up in finite time.
    \end{definition}
    In \cite{EsfahaniLevandovsky}, it was shown that for $p\geq 4$, then  $\varphi\in G_1$ with $y\varphi \in L^2(\mathbb{R}^2)$ is strongly unstable. We generalize this result in the following: 
    \begin{proposition}
        Suppose that $\varphi$ is a ground state of equation \eqref{fifthKPI} for $p>0$, i.e. $\varphi \in G_1$. If $y\varphi \in L^{2}(\mathbb{R}^2),$ then $\varphi$ is strongly unstable in the sense of of Definition \ref{stronginstability}. 
    \end{proposition}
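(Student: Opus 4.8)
The plan is to derive strong instability directly from the finite-time blow-up mechanism of Lemma \ref{blowup}. By Definition \ref{stronginstability} it is enough to produce, for each $\delta>0$, a datum $u_0\in X_s$ ($s\ge 5$) with $yu_0\in L^2(\mathbb{R}^2)$, with $\|u_0-\varphi\|_{E^2}<\delta$, and with $u_0\in\mathcal{J}$; the corollary to Lemma \ref{blowup} then furnishes a solution issuing from $u_0$ that blows up in finite time. The hypothesis $y\varphi\in L^2$ is what allows the localization $yu_0\in L^2$ to survive a small perturbation, and the smoothness and decay of the ground state put $\varphi$, hence nearby data, in $X_s$. So the whole problem reduces to showing that $\varphi$ lies in the $E^2$-closure of $\mathcal{J}$, together with the localization.

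First I would use the Pohozaev identities of Lemma \ref{pohozaev} to record that $\varphi$ sits exactly at the corner of $\mathcal{J}$: $L_1(\varphi)=d_1$, $I(\varphi)=0$, and $K(\varphi)=0$ (the latter from Lemma \ref{minimization2}). Then I would test the anisotropic dilation $\varphi_\lambda(x,y)=\lambda\,\varphi(x,\lambda^{-2}y)$, for which $\|\partial_x^2\varphi_\lambda\|_{L^2}^2$, $\|\varphi_\lambda\|_{L^2}^2$ scale like $\lambda^4$, $\|\partial_x^{-1}\partial_y\varphi_\lambda\|_{L^2}^2$ is invariant, and $\int\varphi_\lambda^{p+2}$ scales like $\lambda^{p+4}$. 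Lemma \ref{pohozaev} gives $\tfrac{d}{d\lambda}L_1(\varphi_\lambda)\big|_{\lambda=1}=0$ and
$$\frac{d^2}{d\lambda^2}L_1(\varphi_\lambda)\Big|_{\lambda=1}=-\frac{p(p+4)}{p+2}\int_{\mathbb{R}^2}\varphi^{p+2}<0,$$
so $\lambda=1$ is a strict local maximum and $L_1(\varphi_\lambda)<d_1$ for $\lambda$ near $1$; moreover $\varphi_\lambda\to\varphi$ in $E^2$. The same scaling shows $I(\varphi_\lambda)<0$ and $Q(\varphi_\lambda)=\|\partial_x^{-1}\partial_y\varphi\|_{L^2}^2-\tfrac{p}{2(p+2)}\int\varphi_\lambda^{p+2}<0$ for $\lambda>1$.

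The delicate point is that this dilation alone gives $K(\varphi_\lambda)=\tfrac{p+4}{2(p+2)}\big(\int\varphi^{p+2}\big)\lambda^4(1-\lambda^p)<0$ for $\lambda>1$, so $\varphi_\lambda$ lands in $\{L_1<d_1,\ I<0,\ Q<0\}$ but \emph{not} in $\mathcal{J}$, whose invariance (Lemma \ref{invariance}) crucially needs $K>0$. To enter $\mathcal{J}$ I would superpose a transverse deformation. Differentiating the functionals and using the Euler--Lagrange equation \eqref{groundstate} yields the clean expressions $I'(\varphi)=-p\,\varphi^{p+1}$ and $K'(\varphi)=-2\,\partial_x^{-2}\partial_y^2\varphi-\tfrac{p}{2}\varphi^{p+1}$, which are linearly independent (the $\partial_x^{-2}\partial_y^2\varphi$ term). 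I would therefore pick a direction $h$ with $\langle I'(\varphi),h\rangle<0$ and $\langle K'(\varphi),h\rangle>0$ and take a two-parameter family $u_0=\varphi_\lambda+\epsilon h$, balancing $\epsilon$ against $\lambda-1$ so that $I(u_0)<0$, $K(u_0)>0$ and $L_1(u_0)<d_1$ hold simultaneously while $\|u_0-\varphi\|_{E^2}\to0$.

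The main obstacle is exactly this simultaneous sign control, i.e. proving $\varphi\in\overline{\mathcal{J}}$. Since $L_1'(\varphi)=0$, the gain $L_1(u_0)<d_1$ is only second order, whereas the sign changes of $I$ and $K$ are first order, so the two parameters enter at competing orders and must be tuned carefully. By the constrained-minimization characterization of Theorem \ref{minimization}, the Hessian $L_1''(\varphi)$ has a single negative direction, and the argument hinges on showing that its negative cone genuinely meets the wedge $\{\langle I'(\varphi),\cdot\rangle<0\}\cap\{\langle K'(\varphi),\cdot\rangle>0\}$ (the pure-dilation directions only reach the complementary quadrants). Once $u_0\in\mathcal{J}$ with $yu_0\in L^2$ is secured, invariance of $\mathcal{J}$ together with Lemma \ref{blowup} forces blow-up, and letting $\delta\to0$ gives strong instability of $\varphi$ in the sense of Definition \ref{stronginstability}.
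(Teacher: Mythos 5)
Your reduction matches the paper's: everything comes down to producing, arbitrarily close to $\varphi$ in $E^2$, a localized datum $u_0\in X_s$ with $L_1(u_0)<d_1$, $I(u_0)<0$, $K(u_0)>0$, after which invariance (Lemma \ref{invariance}) and Lemma \ref{blowup} force blow-up. Your intermediate computations are correct: the Pohozaev normalizations, the vanishing first variation and the second variation $-\tfrac{p(p+4)}{p+2}\int\varphi^{p+2}$ of $L_1(\varphi_\lambda)$, the sign $K(\varphi_\lambda)<0$ for $\lambda>1$, and the formulas $I'(\varphi)=-p\varphi^{p+1}$, $K'(\varphi)=-2\partial_x^{-2}\partial_y^2\varphi-\tfrac{p}{2}\varphi^{p+1}$ all check out. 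But the proof stops exactly where the work is: you never construct the direction $h$ nor verify that the three sign conditions hold simultaneously, and you say yourself that the argument ``hinges on'' the negative cone of $L_1''(\varphi)$ meeting the wedge $\{\langle I'(\varphi),\cdot\rangle<0\}\cap\{\langle K'(\varphi),\cdot\rangle>0\}$ --- a claim you leave unproven. Worse, the spectral input you invoke (that $L_1''(\varphi)$ has a single negative direction, via the constrained minimization of Theorem \ref{minimization}) is not available in this paper: even uniqueness of ground states is explicitly open (see the remark after Definition \ref{defgroundstate}), and no nondegeneracy or Morse-index statement is proved anywhere, so this route cannot be closed with the tools at hand. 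Your own order-counting exposes the difficulty: if $\epsilon\ll\lambda-1$, the $O(\epsilon)$ correction from $h$ cannot flip $K(\varphi_\lambda)\approx -c(\lambda-1)<0$, while if $\epsilon\sim\lambda-1$ the sign of $L_1(u_0)-d_1$ becomes a quadratic-form competition that you do not resolve.

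The paper sidesteps all spectral considerations by enlarging the trial manifold instead of perturbing off it: after reducing via Theorem 4.3 and Lemma 4.5 of \cite{Esfahani}, it takes the explicit three-parameter rescaling $w_{\lambda,\xi,\mu}(x,y)=\lambda\varphi(\xi x,\mu y)$ with $\lambda=(1+\varepsilon)^{1/2}$, $\xi=(1+4\varepsilon)^{1/4}$, $\mu=(1+4\varepsilon)^{1/2}(1-6\varepsilon)$, and checks the three sign conditions directly. The direction missing from your argument is precisely the $x$-dilation: writing $v_1=\tfrac{d}{d\lambda}\varphi_\lambda\big|_{\lambda=1}$ for your generator and $v_2=x\partial_x\varphi$ for the $x$-dilation generator, Lemma \ref{pohozaev} gives first variations $(dI,dK)\propto(-2,-1)$ along $v_1$ and $(dI,dK)\propto(1,2)$ along $v_2$; these are linearly independent, so a positive combination (all parameters tied to a single $\varepsilon$, at comparable first order) achieves $I<0$ and $K>0$ at order $\varepsilon$, while the first variation of $L_1$ vanishes automatically because $L_1'(\varphi)=0$, and $L_1<d_1$ is then a closed-form second-order check using only the Pohozaev identities --- no information about $L_1''(\varphi)$ as an operator is needed. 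If you replace your abstract $h$ by a slight $x$-rescaling of $\varphi$, your sketch essentially becomes the paper's proof; such rescalings also manifestly preserve $y u_0\in L^2$ and membership in $X_s$, which you correctly noted are needed for Lemma \ref{blowup}.
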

    \begin{proof} 
    In the view of Theorem $4.3$ and Lemma $4.5$ of \cite{Esfahani}, it remains to show only that there exists $u_0\in E^2$ close to $\varphi$ such that $S(u_0)<d_1, I(u_0)<0, K(u_0)>0.$ Take $w_{\lambda, \xi, \mu}(x,y)=\lambda\varphi(\xi x, \mu y)$ with $\lambda=(1+\varepsilon)^{\frac{1}{2}}, \xi=(1+4\varepsilon)^{\frac{1}{4}}, \mu=(1+4\varepsilon)^{\frac{1}{2}}(1-6\varepsilon),$ for some small $\varepsilon>0.$ Then we see $S(w_{\lambda, \xi, \mu})<d_1, I(w_{\lambda, \xi, \mu})<0, K(w_{\lambda, \xi, \mu})>0.$ Blow-up is then a consequence of Lemma \ref{blowup}.
 
    \end{proof}
    \begin{remark}
        From \cite{deBouardSaut2}, we know that the ground state of equation \eqref{fifthKPI}, $\varphi \in G_1,$ has the decay rate bounded by $$|\varphi(x,y)|\lesssim \frac{1}{x^2+y^2}.$$
        This decay rate is not enough to show that the ground state satisfies $y\varphi\in L^2(\mathbb{R}^2).$ Nevertheless, we do not know a sharp decay rate for $\varphi \in G_1.$
    \end{remark}

\section{Global well-posedness for the fifth order modified KP-I equation on a cylinder}\label{RT}
In this section, we prove that global-wellposedness result for the equation 
\begin{equation}\label{eqmkp5RT}
 \begin{cases}
  \partial_t u-\partial^5_x u-\partial_x^{-1}\partial_y^2 u + \partial_x (u^3)=0,\\
u(0,x,y)=\phi(x,y)\in Z^2(\mathbb{R}\times\mathbb{T})
 \end{cases} 
 \end{equation} 
where we define the space 
$$Z^s(\mathbb{R}\times\mathbb{T})=\Big\{g:\mathbb{R}\times\mathbb{T}\rightarrow \mathbb{R}:\|g\|_{Z^s}=\norm{\Big(1+|\xi|^{2s}+\frac{|n|}{|\xi|}\Big)\hat{g}(\xi,n)}_{L^2(\mathbb{R}\times\mathbb{Z})}\Big\}.$$
We begin with Strichartz estimates that sharpens the estimates found in \cite{KenigIonescu} modifying their proofs. 
\begin{theorem}
    Assume $\phi \in H^{\infty}(\mathbb{R}\times \mathbb{T}).$ Then we have 
    \begin{itemize}
        \item[i)] 
    \begin{equation}\label{eqdispersive1}
    \|W(t)\tilde{Q}_{y}^{3j}Q_{x}^{j}\phi\|_{L^{2}_{2^{-2j}}L^{\infty}_{xy}}\lesssim 2^{-\frac{29j}{20}}\|\tilde{Q}^{3j}_{y}Q_{x}^{j}\phi\|_{L^{2}_{xy}}
    \end{equation}
    and 
    \item[ii)]
     \begin{equation}\label{eqdispersive2}
     \|W(t)Q_{y}^{3j+k}Q_{x}^{j}\phi\|_{L^{2}_{2^{-2j-k}}L^{\infty}_{xy}}\lesssim 2^{-\frac{29j}{20}-\frac{17k}{20}}\|Q^{3j+k}_{y}Q_{x}^{j}\phi\|_{L^{2}_{xy}}
     \end{equation}
     \end{itemize}
     for any integers $j\geq 0$ and $k\geq 1.$ 
\end{theorem}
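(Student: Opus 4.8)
The plan is to prove both estimates by the $TT^*$ method, reducing each to a pointwise (kernel) dispersive bound on the relevant frequency block and then recovering the space–time norm through interpolation with the trivial $L^2_{xy}$ conservation, a fractional integration (Hardy–Littlewood–Sobolev) in time over the short window of length $2^{-2j}$ (respectively $2^{-2j-k}$), and a Bernstein step to reach the $L^\infty_{xy}$ endpoint. Concretely, writing $W(t)\tilde Q_y^{3j}Q_x^j\phi = G_j(\cdot,\cdot,t)\star\phi$ with spatial-convolution kernel
$$G_j(x,y,t)=\sum_{n\in\mathbb{Z}}\int_{\mathbb{R}} e^{i(x\xi+ny)}e^{it(\xi^5+\frac{n^2}{\xi})}\psi_j(\xi)\chi_{3j}(n)\,d\xi,$$
the whole estimate follows once $\|G_j(\cdot,\cdot,t)\|_{L^\infty_{xy}}$ is controlled as a function of $t$ and $2^j$ on the admissible time interval. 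The scheme mirrors Lemma \ref{StrichartzEstimates} exactly, the only genuinely new ingredient being the treatment of the \emph{discrete} sum over $n$.

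For the kernel bound I would first isolate the $n$–sum. On $\mathbb{R}\times\mathbb{R}$ one integrates the Gaussian phase $e^{it\mu^2/\xi}$ in $\mu$ to gain $|t|^{-1/2}|\xi|^{1/2}$, and the remaining fifth-order oscillatory integral in $\xi$ is handled by the estimate of \cite{GuoPengWang}, giving $\sup_x|\int |\xi|^{1/2}e^{it\xi^5}e^{ix\xi}\psi_j\,d\xi|\lesssim |t|^{-\theta}2^{j(1-5\theta)}$ for $\theta\in[0,\tfrac12]$, hence $\|G_j(\cdot,\cdot,t)\|_{L^\infty_{xy}}\lesssim |t|^{-\theta-1/2}2^{j(3/2-5\theta)}$. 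On the cylinder the discrete sum over $n$ must replace this $\mu$–integration, so I would apply Poisson summation, $\sum_n F(n)=\sum_{m}\widehat F(m)$, rewriting $G_j$ as a sum over the dual variable $m\in\mathbb{Z}$. The $m=0$ term reproduces precisely the $\mathbb{R}^2$ kernel, and hence the same bound, while the terms $m\neq0$ are error terms.

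The role of the time localization is exactly to kill these error terms. On $|t|\lesssim 2^{-2j}$ the stationary point of the phase governing the $m\neq0$ contributions (with transverse frequency $|n|\sim 2^{3j}$ and $|\xi|\sim2^j$) is pushed outside the support $\chi_{3j}$, so the phase derivative has a lower bound proportional to $|m|$ and repeated integration by parts yields rapid decay in $m$; the whole $m\neq0$ sum is then negligible against the $m=0$ term. For part ii the transverse frequency is raised to $|n|\sim2^{3j+k}$, $k\ge1$, and the identical argument runs on the shorter interval $|t|\lesssim2^{-2j-k}$, the extra localization in $n$ producing the additional gain $2^{-17k/20}$. This is the crux of the matter: \emph{the length of the time window is precisely what makes the discrete sum comparable to its continuous counterpart}, which is why the estimates are stated with windows $2^{-2j}$ and $2^{-2j-k}$ rather than on all of $\mathbb{R}$.

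With the kernel bound in hand, Young's inequality gives the $L^1_{xy}\to L^\infty_{xy}$ decay, interpolation with the $L^2_{xy}\to L^2_{xy}$ bound yields an $L^{r'}_{xy}\to L^r_{xy}$ estimate, Bernstein in both frequency variables (the block has frequency volume $\sim 2^{4j}$) converts $L^r_{xy}$ into $L^\infty_{xy}$, and the $TT^*$ identity together with Hardy–Littlewood–Sobolev in $t$ over the window of length $2^{-2j}$ (resp. $2^{-2j-k}$) closes the argument. The main obstacle, beyond the Poisson-summation bookkeeping, is the exponent optimization: one must choose $\theta$ and balance the dispersive $t$–decay against the Bernstein cost and the window length so that the combined gain is \emph{exactly} $2^{-29j/20}$ in part i and $2^{-29j/20-17k/20}$ in part ii. Pinning down these sharp constants $29/20$ and $17/20$, rather than a lossier exponent, is where the computation has to be done carefully, and it is what sharpens the estimates of \cite{KenigIonescu}.
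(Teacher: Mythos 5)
Your proposal has the right oscillatory-integral ingredient (Poisson summation in the transverse variable, with the short time window forcing the dual-lattice terms $m\neq 0$ to be non-stationary and hence negligible --- this is exactly what the paper does, keeping only $|\nu|\le 100$ after showing $|\gamma'|\sim|\nu|$ because $y\in[0,2\pi)$, $|\eta|\lesssim 2^{3j}$, $|t|\lesssim 2^{-2j}$, $|\xi|\gtrsim 2^{j}$), but the reduction scheme you wrap around it --- $TT^*$ from a \emph{fixed-time} kernel sup bound, Young in space, interpolation with $L^2$, Bernstein on the block, and Hardy--Littlewood--Sobolev or H\"older in time --- provably cannot produce the exponent $2^{-29j/20}$. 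Run the exponent ledger: the fixed-time bound on the block is $\|G_j(t)\|_{L^\infty_{xy}}\lesssim |t|^{-\frac12-\theta}2^{j(\frac32-5\theta)}$ with $\theta\in[0,\frac12]$; setting $\sigma=1-\frac{2}{r}$, the $TT^*$/HLS step yields $2^{j(\frac32-5\theta)\sigma/2}\|\phi\|_{L^2}$ in $L^q_TL^r_{xy}$ with $\frac{2}{q}=(\frac12+\theta)\sigma$, Bernstein on the block (frequency volume $\sim 2^{4j}$) costs $2^{2j(1-\sigma)}$, and H\"older down to $L^2_T$ over the window $T=2^{-2j}$ gives $2^{-2j(\frac12-\frac1q)}$. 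The total power of $2^j$ is $1-\frac34\sigma(1+2\theta)$, which over the admissible range $\sigma\le 1$, $\theta\le\frac12$ is at least $-\frac12$ (the same floor appears if you replace HLS by window-length H\"older: the $q$-dependence cancels by scaling). So your pipeline saturates at $2^{-j/2}$, far short of $2^{-29j/20}$, and the same computation shows it cannot generate the transverse gain $2^{-17k/20}$ in part ii either. The loss is structural, not a matter of ``careful optimization'': once you compress the evolution into a pointwise-in-$t$ dispersive bound and apply Young in space, you have discarded the decay of the time kernel in the separation $|t-t'|$, which is precisely where the exponent $29/20$ lives.

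The paper's proof is genuinely different at the reduction stage. It never forms admissible Strichartz pairs: it linearizes the $L^\infty_{xy}$ norm along arbitrary measurable paths $(x(t),y(t))$, dualizes the $L^2_t$ norm, and reduces to bounding the scalar kernel $K_j(t,t')=\sum_n\int\psi_1^2\psi_0^2\,e^{i[\xi(x(t)-x(t'))+n(y(t)-y(t'))+F(\xi,n)(t-t')]}d\xi$ \emph{dyadically in the time separation} $|t-t'|\sim 2^{-l}$ for every $l\ge 2j$, summing the strips by Schur/Young in $t$. The per-strip bound is where the hard analysis sits: after Poisson summation, the $|\nu|\le100$ terms are \emph{not} estimated by the $\mathbb{R}^2$ dispersive bound (as your ``$m=0$ term reproduces the $\mathbb{R}^2$ kernel, hence the same bound'' suggests); instead the paper uses the exact Gaussian--Fresnel identity \eqref{Fourierformula} to trade the $\eta$-integral for a quadratic phase in $\xi$, and then integrates by parts in $\xi$, pairing a Van der Corput bound on the antiderivative $L$ of the quintic oscillation $e^{i(\xi x+\xi^5 t)}$ (uniform in the measurable path, i.e.\ in $x$) against the derivative bound $\|G'\|_{L^1}\lesssim 2^{l-2j}$, producing a bound that improves as $l$ grows and whose geometric sum over $l\ge 2j$ yields $2^{-29j/10}=(2^{-29j/20})^2$; for part ii the shorter window forces $l\ge 2j+k$ and the $\eta$-support $2^{3j+k}$ improves $G'$, giving the $k$-gain. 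To repair your write-up you would have to abandon the $TT^*$/Bernstein/HLS skeleton and adopt this maximal-function duality with strip-by-strip kernel estimates; the Poisson-summation bookkeeping you describe would then carry over essentially verbatim.
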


\begin{proof}
    We begin with $i)$ as in the proof of Theorem 9.3.1 in \cite{KenigIonescu}. Let $a(\xi,n)=\widehat{(\tilde{Q}_{y}^{3j}Q_{x}^{j}\phi)}(\xi, n),$ so $a(0,n)=0.$ Let $\psi_0:\mathbb{R}\rightarrow [0,1]$ denote a smooth even function supported in the interval $[-2,2]$ and equal to $1$ in the interval $[-1,1]$ and $\psi_1:\mathbb{R}\rightarrow [0,1]$ a smooth even function supported in the set $\{|r|\in [\frac{1}{4},4]\}$ and equal to $1$ in the set $\{|r|\in [\frac{1}{2},2]\}.$ Then, 
    $$W(t)\tilde{Q}_{y}^{2j}Q_{y}^{j}\phi(x,y)=C\sum_{n\in \mathbb{Z}}\int_{\mathbb{R}}a(\xi,n)\psi_1\Big(\frac{\xi}{2^j}\Big)\psi_0\Big(\frac{n}{2^{3j}}\Big)e^{i(\xi x+ny+F(\xi,n)t)}d\xi$$
    where $F(\xi,n)=\xi^5+\frac{n^2}{\xi}.$
    Thus, it suffices to prove that 
    $$\|1_{[0,2^{-2j}]}(|t|)\sum_{n\in \mathbb{Z}}\int_{\mathbb{R}}a(\xi,n)\psi_1\Big(\frac{\xi}{2^j}\Big)\psi_0\Big(\frac{n}{2^{3j}}\Big)e^{i(\xi x(t)+ny(t)+F(\xi,n)t)}d\xi\|_{L^{2}_{t}}\leq C2^{-\frac{29j}{20}}\|a\|_{L^2(\mathbb{R}\times\mathbb{T})}$$
    for any measurable functions $x:[-2^{-j},2^{-j}]\rightarrow \mathbb{R}$ and $y:[-2^{-j},2^{-j}]\rightarrow \mathbb{T}.$ By duality, this is equivalent to proving that 
    $$\norm{\int_{\mathbb{R}}g(t)1_{[0,2^{-j}]}(|t|)\psi_1\Big(\frac{\xi}{2^j}\Big)\psi_0\Big(\frac{n}{2^{3j}}\Big))e^{i(\xi x(t)+ny(t)+F(\xi,n)t)}dt}_{L^2(\mathbb{R}\times\mathbb{T})}\leq C 2^{-\frac{29j}{10}}\|g\|_{L^{2}_{t}},$$
    for any $g\in L^2(\mathbb{R}).$ By expanding the $L^2$ norm in the left-hand side, it suffices to prove that 
    $$\Big|\int_{\mathbb{R}}\int_{\mathbb{R}}g(t)g(t')K_{j}(t,t')dtdt'\Big|\leq C2^{-\frac{29j}{10}}\|g\|_{L^{2}_{t}},$$
    where 
    $$K_j(t,t')=1_{[0,2^{-2j}]}(|t|)1_{[0,2^{-2j}]}(|t'|)\sum_{n\in \mathbb{Z}}\int_{\mathbb{R}}\psi_{1}^{2}(\frac{\xi}{2^j})\psi_{0}^{2}\Big(\frac{n}{2^{3j}}\Big)e^{i[\xi (x(t)-x(t'))+n(y(t)-y(t'))+F(\xi,n)(t-t')]}d\xi.$$ 
    For integers $l\geq j$ let $K_{j}^{l}(t,t')=1_{[2^{-l},2\cdot2^{-l}]}(|t-t'|)K_j(t,t').$ 
    By using Young's convolution inequality, it suffices to prove that 
    $$|K_{j}^{l}(t,t')|\leq C2^l2^{-\frac{29j}{10}}2^{\frac{2j-l}{10}}$$
    for any $l\geq 2j.$ 
    To summarize, it suffices to prove that 
    $$\Big|\sum_{n\in \mathbb{Z}}\int_{\mathbb{R}}\psi_{1}^{2}(\frac{\xi}{2^j})\psi_{0}^{2}\Big(\frac{n}{2^{3j}}\Big)e^{i[\xi x+ny+F(\xi,n)t]}d\xi\Big|\leq C2^l2^{-\frac{29j}{10}}2^{\frac{2j-l}{10}}$$
    for any $x\in \mathbb{R}, y\in [0,2\pi), |t|\in [2^{-l},2\cdot2^{-l}],l\geq 2j.$

    Let $A_j(\xi,t,y)=\sum_{n\in \mathbb{Z}}\psi_{0}^{2}\big(\frac{n}{2^{3j}}\big)e^{i(ny+\frac{n^2t}{\xi})}.$ We use the Poisson summation formula 
    $$\sum_{n\in \mathbb{Z}}F(n)=\sum_{\nu\in \mathbb{Z}}\hat{F}(2\pi\nu)$$
    for any $F \in \mathcal{S}(\mathbb{R}).$ Thus, 
    $$A_j(\xi,t,y)=\sum_{\nu\in \mathbb{Z}}\int_{\mathbb{R}}\psi_{0}^{2}\Big(\frac{\eta}{2^{3j}}\Big)e^{i(\eta(y+2\pi\nu)+\frac{\eta^2t}{\xi})}d\eta.$$
    Denote $\gamma(\eta)=\eta(y+2\pi\nu)+\frac{\eta^2t}{\xi}$ and $\zeta(\eta)=\psi_{0}^{2}\big(\frac{\eta}{2^{3j}}\big).$ 
    By integration by parts, we observe that 
    $$\int_{\mathbb{R}}\zeta(\eta)e^{i\gamma(\eta)}d\eta=\int_{\mathbb{R}}[(i\gamma')^{-1}\{(i\gamma')^{-1}[(i\gamma')^{-1}\zeta]'\}']'e^{i\gamma}d\eta$$
    $$=\int_{\mathbb{R}}[(H')^3\zeta+H (H')^2 \zeta'+3H^2H''\zeta'+3H^2H'''\zeta+3H^2H'\zeta''+H^3\zeta''']e^{i\gamma}d\eta,$$
    where $H=(i\gamma')^{-1}.$ We have that $\|\partial_{\eta}^{\alpha}\zeta\|_{L^{1}_{\eta}}\lesssim 2^{3j(1-\alpha)}\|\psi_0\|_{L^{2}_{\eta}}^{2},$ for all integers $\alpha \geq 0.$
    Since $y \in [0,2\pi), \eta \leq 2^{3j+1}, t\leq 2\cdot2^{-2j}$ and $|\xi|\geq 2^{j-1},$ then $\|\gamma'\|_{L^{\infty}_{xy}}\sim |\nu|$ for $|\nu|\geq 100.$ Also, observe that $\|\gamma''\|_{L^{\infty}_{\eta}}\sim 2^{-3j}$ and $\gamma'''=0.$ Therefore, we get that 
    $$\|H\|_{L^{\infty}_{\eta}}\sim \frac{1}{|\nu|}, \|H'\|_{L^{\infty}_{\eta}}\sim \frac{2^{-3j}}{\nu^2}, \|H''\|_{L^{\infty}_{\eta}}\sim \frac{2^{-6j}}{|\nu|^3}, \|H'''\|_{L^{\infty}_{\eta}}\sim \frac{2^{-9j}}{\nu^4}.$$
    It yields that 
    $$\Big|\int_{\mathbb{R}}\psi_{0}^{2}\Big(\frac{\eta}{2^{3j}}\Big)e^{i(\eta(y+2\pi\nu)+\frac{\eta^2t}{\xi})}d\eta\Big|\lesssim \frac{2^{-6j}}{|\nu|^3}.$$ 
    We conclude that
    $$A_j(\xi,t,y)=\sum_{|\nu|\leq 100}\int_{\mathbb{R}}\psi_{0}^{2}\Big(\frac{\eta}{2^{3j}}\Big)e^{i(\eta(y+2\pi\nu)+\frac{\eta^2t}{\xi})}d\eta+O(2^{-6j}).$$ 
    By integrating in $\xi,$ we get that 
    $$\Big|\int_{\mathbb{R}}\psi_{1}^{2}\Big(\frac{\xi}{2^j}\Big)e^{i(\xi x+\xi^5t)}O(2^{-6j})d\xi\Big|\lesssim 2^{-6j}\int_{\mathbb{R}}\psi_{1}^{2}\Big(\frac{\xi}{2^j}\Big)d\xi=2^{-5j}\|\psi_{1}\|_{L^2}^2,$$
    hence it is definitely bounded by $2^l2^{-\frac{29j}{10}}2^{\frac{2j-l}{10}}$ for $l\geq 2j.$ 
    Therefore, it remains to show that 
    $$\Big|\int_{\mathbb{R}}\psi_{1}^{2}\Big(\frac{\xi}{2^j}\Big)e^{i(\xi x+\xi^5t)}\Big(\int_{\mathbb{R}}\psi_{0}^{2}\Big(\frac{\eta}{2^{3j}}\Big)e^{i(\eta y'+\frac{\eta^2t}{\xi})}d\eta\Big)d\xi\Big|\lesssim 2^l2^{-\frac{29j}{10}}2^{\frac{2j-l}{10}},$$
    for any $l\geq 2j, x\in \mathbb{R}, y'=y+2\pi\nu \in [-1000,1000], |t|\in [2^{-l},2\cdot2^{-l}].$
    We use the formula that appeared in \cite{KenigIonescu}, namely 
    \begin{equation}\label{Fourierformula}
    \int_{\mathbb{R}}\psi_{0}^{2}\Big(\frac{\eta}{2^{3j}}\Big)e^{i(\eta y'+\frac{\eta^2t}{\xi})}d\eta=C\Big|\frac{\xi}{t}\Big|^{\frac{1}{2}}\int_{\mathbb{R}}\widehat{\psi_{0}^{2}}(\mu)e^{-i(y'+\frac{\mu}{2^{3j}})^2\frac{|\xi|}{|t|}}d\mu.
    \end{equation}
    Hence, it suffices to show that 
    $$J=\Big|\int_{\mathbb{R}}\Big|\frac{\xi}{2^j}\Big|^{\frac{1}{2}}\psi_{1}^{2}\Big(\frac{\xi}{2^j}\Big)e^{i(\xi x+\xi^5t)}\Big(\int_{\mathbb{R}}\widehat{\psi_{0}^{2}}(\mu)e^{-i(y'+\frac{\mu}{2^{3j}})^2\frac{|\xi|}{|t|}}d\mu\Big)d\xi\Big|\lesssim 2^{\frac{l-j}{2}}2^l2^{-\frac{29j}{10}}2^{\frac{2j-l}{10}},$$
    for any $x\in \mathbb{R}, y'\in [-1000,1000], |t|\sim 2^{-l}.$ From now on, we fix $x,y', t$. Define $L(\tilde{\xi})=\int_{2^{j-2}}^{\tilde{\xi}}\psi_{1}^{2}\big(\frac{\xi}{2^j}\big)\big|\frac{\xi}{2^j}\big|^{\frac{1}{2}}e^{i(\xi x+\xi^5t)}d\xi$ and $$G(\xi)=\int_{\mathbb{R}}\widehat{\psi_{0}^{2}}(\mu)e^{- i(y'+\frac{\mu}{2^{3j}})^2\frac{ \xi}{|t|}}d\mu,$$
    thus $J\lesssim \Big|\int_{0}^{\infty}L'(\xi)G(\xi)d\xi\Big|+\Big|\int_{0}^{\infty}\overline{L'(\xi)}G(\xi)d\xi\Big|,$ so it suffices to show that 
    $$\Big|\int_{0}^{\infty}L'(\xi)G(\xi)d\xi\Big|\lesssim 2^{\frac{l-j}{2}}2^l2^{-\frac{29j}{10}}2^{\frac{2j-l}{10}}.$$ 
    By integration by parts and the definition of $\psi_1$, we get that 
    \begin{equation}\label{integrationbyparts}
    \int_{0}^{\infty}L'(\xi)G(\xi)d\xi=\int_{2^{j-2}}^{2^{j+2}}L'(\xi)G(\xi)d\xi=\Big[LG\Big]_{2^{j-2}}^{2^{j+2}}-\int_{2^{j-2}}^{2^{j+2}}L(\xi)G'(\xi)d\xi.
    \end{equation}
    We have that $L(2^{j-2})=0$ and $|G(\xi)|\lesssim \|\widehat{\psi_{0}^{2}}\|_{L^1}.$ By Van der Corput's inequality (see \cite{Stein}, Chapter 8, Proposition 5), we have that for any $\tilde{\xi}\in [2^{j-2},2^{j+2}],$
    $$|L(\tilde{\xi})|=\Big|\int_{2^{j-2}}^{\tilde{\xi}}\psi_{1}^{2}\Big(\frac{\xi}{2^j}\Big)\Big|\frac{\xi}{2^j}\Big|^{\frac{1}{2}}e^{i(\xi x+\xi^5t)}d\xi\Big|\lesssim 2^{-\frac{l}{5}}(\||x|^{\frac{1}{2}}\psi_1\|_{L^{\infty}}+\|\psi_1\|_{L^2}^{2}+\|\psi_1\|_{L^2}\|\psi_1'\|_{L^2}).$$
    We also notice that 
    $$|G'(\xi)|= \Big|\int_{\mathbb{R}} \widehat{\psi_{0}^{2}}(\mu)e^{-i(y'+\frac{\mu}{2^{3j}})^2\frac{\xi}{t}}\frac{1}{2^{3j}t}\Big(2\mu y'+\frac{\mu^2}{2^{3j}}\Big)d\mu\Big|$$
    $$\approx  2^{l-3j}\Big|\int_{\mathbb{R}} \widehat{\psi_{0}^{2}}(\mu)e^{-i(y'+\frac{\mu}{2^{3j}})^2\frac{\xi}{t}}\Big(2\mu y'+\frac{\mu^2}{2^{3j}}\Big)d\mu\Big|\lesssim 2^{l-3j},$$
    where we used the decay properties of the Schwartz function $\widehat{\psi_{0}^{2}}$ and $|y'|\leq 1000.$ 
    Therefore, we obtain that $\|G'(\xi)\|_{L^{1}_{[2^{j-2},2^{j+2}]}}\leq 2^{l-2j}.$
    Plugging in \eqref{integrationbyparts} all these estimates, we get 
    $$J\lesssim 2^{-\frac{l}{5}}2^{l-2j}\leq 2^{\frac{l-j}{2}}2^l2^{-\frac{29j}{10}}2^{\frac{2j-l}{10}},$$
    since $l\geq 2j,$ which completes the proof of $i)$.  
    
    For $ii),$ we proceed the same. To conclude, it suffices to prove that 
    $$\Big|\sum_{n\in \mathbb{Z}}\int_{\mathbb{R}}\psi_{1}^{2}(\frac{\xi}{2^j})\psi_{1}^{2}\Big(\frac{n}{2^{3j+k}}\Big)e^{i[\xi x+ny+F(\xi,n)t]}d\xi\Big|\leq C2^l2^{-\frac{29j}{10}-\frac{17k}{10}}2^{\frac{2j+k-l}{10}}$$
     for any $x\in \mathbb{R}, y\in [0,2\pi), |t|\in [2^{-l},2\cdot2^{-l}],l\geq 2j+k.$
     We use the Poisson summation and the formula \eqref{Fourierformula} to reduce that we need only 
     $$\tilde{J}=\Big|\int_{\mathbb{R}}\Big|\frac{\xi}{2^j}\Big|^{\frac{1}{2}}\psi_{1}^{2}\Big(\frac{\xi}{2^j}\Big)e^{i(\xi x+\xi^5t)}\Big(\int_{\mathbb{R}}\widehat{\psi_{1}^{2}}(\mu)e^{-i(y'+\frac{\mu}{2^{3j+k}})^2\frac{|\xi|}{|t|}}d\mu\Big)d\xi\Big|\lesssim 2^{\frac{l-j}{2}}2^l2^{-\frac{29j}{10}-\frac{17k}{10}}2^{\frac{2j+k-l}{10}}.$$
     By the same reasoning as above, using Van der Corput's inequality, we obtain that 
     $$\tilde{J}\leq 2^{-\frac{l}{5}}2^{l-2j-k}\leq 2^{\frac{l-j}{2}}2^l2^{-\frac{29j}{10}-\frac{17k}{10}}2^{\frac{2j+k-l}{10}},$$
     for any $l\geq 2j+k,$ which concludes the proof of $ii).$
\end{proof}

     \begin{proposition} \label{linearestimate5RT}
 Assume $u \in C([-T,T]:H^{\infty}(\mathbb{R}\times\mathbb{T})),$ $f \in C([-T,T]:H^{\infty}(\mathbb{R}\times\mathbb{T})),$  and 
 $$[\partial_t-\partial_x^5-\partial_x^{-1}\partial_y^2]u=\partial_x f \mbox{ on } \mathbb{R}\times\mathbb{T}\times [-T,T].$$ Then, for any $\epsilon> 0$ we have
\begin{equation*} 
\begin{split} 
 \|u\|_{L^2_TL^{\infty}_{xy}}\lesssim_{\epsilon}  \|J_x^{\frac{11}{20}+\epsilon}u\|_{L^{\infty}_TL^{2}_{xy}}+\|J_x^{\frac{1}{10}-2\epsilon}J_y^{\frac{3}{20}+\epsilon}u\|_{L^{\infty}_TL^{2}_{xy}}+\|J_x^{-\frac{9}{20}+\epsilon}f\|_{L^{1}_TL^{2}_{xy}}.
 \end{split} 
 \end{equation*}
 \end{proposition}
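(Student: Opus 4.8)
The plan is to combine the two dispersive estimates \eqref{eqdispersive1}--\eqref{eqdispersive2} with a short-time Duhamel decomposition, after reducing to single dyadic frequency blocks. Since the Fourier projections commute with the linear flow, applying $Q_x^jQ_y^{3j+k}$ to the equation shows that $u_{j,k}:=Q_x^jQ_y^{3j+k}u$ solves the same linear equation with forcing $\partial_x f_{j,k}$. I would therefore write $u=\sum_{j,k}u_{j,k}$, localizing $|\xi|\sim 2^j$ (with $j\ge 0$) and $|n|\sim 2^{3j+k}$, bundling the low transverse frequencies $|n|\lesssim|\xi|^3$ into the single projection $\tilde Q_y^{3j}Q_x^j$ governed by \eqref{eqdispersive1} (natural time scale $2^{-2j}$) and keeping $|n|\gg|\xi|^3$, i.e.\ $k\ge1$, governed by \eqref{eqdispersive2} (natural time scale $2^{-2j-k}$). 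Because $\|u\|_{L^2_TL^\infty_{xy}}\le\sum_{j,k}\|u_{j,k}\|_{L^2_TL^\infty_{xy}}$ it suffices to bound one block and sum; the $\epsilon$'s in the statement leave exactly enough room to absorb the summable geometric prefactors that appear, so $\|Q_x^jQ_y^{3j+k}g\|_{L^2}\le\|g\|_{L^2}$ may be used freely and no sharp orthogonality is required.

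For a fixed block in the regime $k\ge1$ I would partition $[-T,T]$ into $N\sim T2^{2j+k}$ intervals $I_m$ of the natural length $2^{-2j-k}$ and apply Duhamel from the left endpoint $t_m$, $u_{j,k}(t)=W(t-t_m)u_{j,k}(t_m)+\int_{t_m}^tW(t-s)\partial_xf_{j,k}(s)\,ds$. Using $\|g\|_{L^2_TL^\infty_{xy}}^2=\sum_m\|g\|_{L^2_{I_m}L^\infty_{xy}}^2$, the homogeneous piece obeys, via \eqref{eqdispersive2} and $\|u_{j,k}(t_m)\|_{L^2}\le\|u_{j,k}\|_{L^\infty_TL^2_{xy}}$,
$$\|u_{j,k}^{\mathrm{hom}}\|_{L^2_TL^\infty_{xy}}\lesssim N^{1/2}\,2^{-\frac{29j}{20}-\frac{17k}{20}}\|u_{j,k}\|_{L^\infty_TL^2_{xy}}\lesssim T^{1/2}\,2^{-\frac{9j}{20}-\frac{7k}{20}}\|u_{j,k}\|_{L^\infty_TL^2_{xy}},$$
since $N^{1/2}\sim T^{1/2}2^{j+k/2}$ and $1-\tfrac{29}{20}=-\tfrac9{20}$, $\tfrac12-\tfrac{17}{20}=-\tfrac7{20}$; as $T\le1$ the factor $T^{1/2}$ is harmless. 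The low regime is identical with \eqref{eqdispersive1} and $N\sim T2^{2j}$, giving $T^{1/2}2^{-\frac{9j}{20}}\|\tilde Q_y^{3j}Q_x^ju\|_{L^\infty_TL^2_{xy}}$. This is exactly where the two right-hand terms split: in the low regime the $y$-frequencies are bundled, so I would route the gain into the pure weight $J_x^{\frac{11}{20}+\epsilon}$ using $\tfrac9{20}+\tfrac{11}{20}=1$, which turns the block bound into $2^{-(1+\epsilon)j}\|J_x^{\frac{11}{20}+\epsilon}\tilde Q_y^{3j}Q_x^ju\|_{L^\infty_TL^2_{xy}}$; in the high regime I would use the mixed weight $J_x^{\frac1{10}-2\epsilon}J_y^{\frac3{20}+\epsilon}$, and a direct exponent count leaves the summable prefactor $2^{-(1+\epsilon)j-(\frac12+\epsilon)k}$.

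For the inhomogeneous contribution I would apply Minkowski's inequality on each $I_m$ followed by \eqref{eqdispersive2} to the fixed-time profiles, paying $2^j$ for $\partial_x$ on the block:
$$\Big\|\int_{t_m}^{\cdot}W(\cdot-s)\partial_xf_{j,k}(s)\,ds\Big\|_{L^2_{I_m}L^\infty_{xy}}\lesssim 2^{-\frac{29j}{20}-\frac{17k}{20}}\,2^j\,\|f_{j,k}\|_{L^1_{I_m}L^2_{xy}}.$$
Summing over $m$ with $\|\cdot\|_{\ell^2}\le\|\cdot\|_{\ell^1}$ avoids any factor of $N$ and produces $2^{-\frac{9j}{20}-\frac{17k}{20}}\|f_{j,k}\|_{L^1_TL^2_{xy}}$, which upon inserting $J_x^{-\frac9{20}+\epsilon}$ (using $-\tfrac9{20}+\tfrac9{20}=0$) becomes $2^{-\epsilon j-\frac{17k}{20}}\|J_x^{-\frac9{20}+\epsilon}f_{j,k}\|_{L^1_TL^2_{xy}}$; the low regime is the same without the $k$-gain. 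Summing the geometric series in $j\ge0$ and $k\ge1$ and replacing each block weight by the full weight gives the third stated term.

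The main obstacle is the exact bookkeeping of exponents. The specific constants $\tfrac{11}{20}$, $\tfrac1{10}$, $\tfrac3{20}$ and $-\tfrac9{20}$ are forced by the interplay of the dispersive gains $2^{-29j/20}$ and $2^{-17k/20}$ with the derivative loss $2^j$ from $\partial_xf$ and the number-of-intervals factor $2^{j+k/2}$, and one must correctly route the low-transverse regime (bundled $y$-frequencies, \eqref{eqdispersive1}) to the pure $J_x$ term and the high-transverse regime to the mixed term for every geometric series to close with an $\epsilon$ to spare. The one remaining point of care is the very low $x$-frequencies $|\xi|\lesssim1$, to which \eqref{eqdispersive1}--\eqref{eqdispersive2} (stated for $j\ge0$) do not apply directly; I would dispose of that bounded-frequency part by elementary Bernstein-type estimates, where no dispersive smoothing is needed.
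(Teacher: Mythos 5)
Your main argument is, in structure and in every exponent, the paper's own proof: the same splitting into $\widetilde{Q}_y^{3j}Q_x^j$ blocks handled by \eqref{eqdispersive1} on the time scale $2^{-2j}$ and $Q_y^{3j+k}Q_x^j$ blocks ($k\geq1$) handled by \eqref{eqdispersive2} on the scale $2^{-2j-k}$, the same partition of $[-T,T]$ into subintervals of the natural length with Duhamel from the left endpoint, the same routing of the homogeneous pieces to $J_x^{\frac{11}{20}+\epsilon}$ (low transverse regime) and $J_x^{\frac{1}{10}-2\epsilon}J_y^{\frac{3}{20}+\epsilon}$ (high regime), and the same treatment of the forcing: pay $2^j$ for $\partial_x$, sum the disjoint $L^1_{I_m}$ norms in $\ell^1$ so that no interval count appears, and insert $J_x^{-\frac{9}{20}+\epsilon}$ (the paper additionally routes the $k$-gain through $J_y^{-\frac{17}{20}+\epsilon}$ in \eqref{eq20}, which is equivalent to your raw prefactor $2^{-\frac{17}{20}k}$). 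Your one deviation is an improvement: summing the homogeneous contributions in $\ell^2_m$ via $\|g\|_{L^2_TL^\infty_{xy}}^2=\sum_m\|g\|_{L^2_{I_m}L^\infty_{xy}}^2$ yields the factor $N^{1/2}$ rather than the paper's $N$ from the triangle inequality (compare \eqref{eq17} and \eqref{eq19}), giving your sharper block gains $2^{-(1+\epsilon)j}$ and $2^{-(1+\epsilon)j-(\frac12+\epsilon)k}$; either accounting closes, and your bookkeeping of $\tfrac{29}{20}$, $\tfrac{17}{20}$, the interval count, and the weights is correct.

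The one flawed point is your closing remark that the very low $x$-frequencies $|\xi|\lesssim1$ can be disposed of by ``elementary Bernstein-type estimates, where no dispersive smoothing is needed.'' That fails whenever the transverse frequency is large: on a block with $|\xi|\lesssim1$ and $|n|\sim2^k$, Bernstein in $y$ costs $2^{k/2}$ in passing from $L^2_{xy}$ to $L^\infty_{xy}$, while the only transverse weight available on the right-hand side of the proposition, $J_y^{\frac{3}{20}+\epsilon}$, repays merely $2^{-(\frac{3}{20}+\epsilon)k}$; since $\tfrac{3}{20}<\tfrac12$ the sum over $k$ diverges, so dispersion is genuinely needed in that regime (Bernstein suffices only when $|n|\lesssim1$ as well). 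In the paper -- following the Ionescu--Kenig framework it adapts -- no separate low-frequency argument appears because $Q_x^0$ is understood as the projection onto the full low ball in $\xi$, so that the $j=0$ cases of \eqref{eqdispersive1}--\eqref{eqdispersive2}, hence of \eqref{eqdisp5}--\eqref{eqdisp5bis}, already cover $|\xi|\lesssim1$ with all $k\geq0$. If you insist on reading $Q_x^0$ as an annulus, the correct repair is to extend the dispersive estimates to the low ball (where the kernel bounds only improve, the factor $|\xi/t|^{1/2}$ being smaller), not to replace them by Bernstein.
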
 
 \begin{proof} 
 Without loss of generality, we may assume that $u \in C^1([-T,T]:H^{\infty}(\mathbb{R}\times\mathbb{T}))$ and $f \in C([-T,T]:H^{\infty}(\mathbb{R}\times\mathbb{T})).$ It suffices to prove that if, for $\epsilon>0,$ 
\begin{equation}\label{eqdisp5}
\|\widetilde{Q}^{3j}_yQ^j_xu\|_{L^2_TL^{\infty}_{xy}}\lesssim 2^{-\epsilon j}[\|J_x^{\frac{11}{20}+\epsilon}u\|_{L^{\infty}_TL^{2}_{xy}}+\|J_x^{-\frac{9}{20}+\epsilon}f\|_{L^{1}_TL^{2}_{xy}}]
\end{equation}
 and 
 \begin{equation}\label{eqdisp5bis} 
 \|Q^{3j+k}_yQ^j_xu\|_{L^{p}_TL^{\infty}_{xy}}\lesssim  2^{-\epsilon(j+k)}[\|J_x^{\frac{1}{10}-2\epsilon}J_y^{\frac{3}{20}+\epsilon}u\|_{L^{\infty}_TL^{2}_{xy}}+\|J_x^{-\frac{9}{20}+\epsilon}f\|_{L^{1}_TL^{2}_{xy}}]
 \end{equation}
  for any integers $j\geq 0$ and $k\geq 1.$ 
 For $(\ref{eqdisp5})$, we partition the interval $[-T,T]$ into $2^{2j}$ equal subintervals of length $2T2^{-2j},$ denoted by $[a_{j,l},a_{j,l+1}),l=1,\ldots,2^{2j}.$ The term in the left-hand side of $\ref{eqdisp5}$, is dominated by 
    $$\|\widetilde{Q}^{3j}_{y}Q^{j}_{x}u\|_{L^{2}_{T}L^{\infty}_{xy}}\leq \sum_{l=1}^{2^{2j}}\|\textbf{1}_{[a_{j,l},a_{j,l+1})}(t)\widetilde{Q}^{3j}_yQ^j_xu\|_{L^p_TL^{\infty}_{xy}}$$
 By Duhamel's formula, for $t\in [a_{j,l},a_{j,l+1}],$ 
 $$u(t)=W_{(5)}(t-a_{j,l})[u(a_{j,l})]+\int_{a_{j,l}}^tW_{(5)}(t-s)[\partial_xf(s)]ds.$$ 
 It follows from the dispersive estimate $(\ref{eqdispersive1})$ that 
 \begin{equation} \label{eqint2} 
 \begin{split} 
 &\|\textbf{1}_{[a_{j,l},a_{j,l+1})}(t)\widetilde{Q}_y^{3j}Q_x^ju\|_{L^2_TL^{\infty}_{xy}}\\&\lesssim \|\textbf{1}_{[a_{j,l},a_{j,l+1})}(t)W_{(5)}(t-a_{j,l})\widetilde{Q}_y^{3j}Q_x^j u(a_{j,l})\|_{L^2_TL^{\infty}_{xy}}+\|\textbf{1}_{[a_{j,l},a_{j,l+1})}(t)\int_{a_{j,l}}^tW_{(5)}(s)\widetilde{Q}_y^{3j}Q_x^j \partial_xf(s)ds\|_{L^2_TL^{\infty}_{xy}}\\& \lesssim  2^{(-\frac{29}{20})j}\|\widetilde{Q}_y^{3j}Q_x^j u(a_{j,l})\|_{L^{2}_{xy}}+2^{(-\frac{29}{20})j}2^j\|\textbf{1}_{[a_{j,l},a_{j,l+1})}(t)\widetilde{Q}_y^{3j}Q_x^j f\|_{L^1_TL^{2}_{xy}}.
 \end{split}
 \end{equation} 
 For the first term of the right-hand side of $(\ref{eqint2})$, we have 
 \begin{equation} \label{eq17}
 \begin{split} 
 \sum_{l=1}^{2^{2j}}&2^{(-\frac{29}{20})j}\|\textbf{1}_{[a_{j,l},a_{j,l+1})}(t)\widetilde{Q}_y^{3j}Q_x^ju(a_{j,l})\|_{L^{2}_{xy}}\lesssim \sum_{l=1}^{2^{2j}}2^{(-\frac{29}{20})j}2^{-(\frac{11}{20}+\epsilon)j}\|\textbf{1}_{[a_{j,l},a_{j,l+1})}(t)\widetilde{Q}_y^{3j}Q_x^jJ_x^{\frac{11}{20}+\epsilon}u(a_{j,l})\|_{L^{2}_{xy}}\\& \lesssim 2^{2j}2^{(-2-\epsilon)j}\|\widetilde{Q}_y^{3j}Q_x^jJ_x^{\frac{11}{20}+\epsilon}u\|_{L^{\infty}_TL^{2}_{xy}} \lesssim 2^{-\epsilon j}\|J_x^{\frac{11}{20}+\epsilon}u\|_{L^{\infty}_TL^{2}_{xy}}.
 \end{split} 
 \end{equation} 
 For the second term of the right-hand side of $(\ref{eqint2})$ we have 
 \begin{equation} \label{eq18}
 \begin{split}
  \sum_{l=1}^{2^{2j}}2^{-\frac{9}{20}j}\|\textbf{1}_{[a_{j,l},a_{j,l+1})}(t)\widetilde{Q}_y^{3j}Q_x^j f\|_{L^1_TL^{2}_{xy}}&\lesssim \sum_{l=1}^{2^{2j}}2^{-\frac{9}{20}j}2^{(\frac{9}{20}-\epsilon)j}\|\textbf{1}_{[a_{j,l},a_{j,l+1})}(t)\widetilde{Q}_y^{3j}Q_x^j J_x^{-\frac{9}{20}+\epsilon}f\|_{L^1_TL^{2}_{xy}}\\& \lesssim 2^{-\epsilon j}\sum_{l=1}^{2^{2j}}\|\textbf{1}_{[a_{j,l},a_{j,l+1})}(t)\widetilde{Q}_y^{3j}Q_x^j J_x^{-\frac{9}{20}+\epsilon}f\|_{L^1_TL^{2}_{xy}}\\&\lesssim 2^{-\epsilon j}\|\widetilde{Q}_y^{3j}Q_x^j J_x^{-\frac{9}{20}+\epsilon}f\|_{L^1_TL^{2}_{xy}} \lesssim2^{-\frac{\epsilon j}{2}}\|J_x^{-\frac{9}{20}+\epsilon}f\|_{L^1_TL^{2}_{xy}}.
  \end{split}
  \end{equation}
  Therefore, $(\ref{eq17})$ and $(\ref{eq18})$ give $(\ref{eqdisp5}).$

 For $(\ref{eqdisp5bis})$, we partition the interval $[-T,T]$ into $2^{2j+k}$ equal subintervals of length $2T2^{-2j-k},$ denoted by $[b_{j,l},b_{j,l+1}),l=1,\ldots,2^{2j+k}.$ The term in the left-hand side of $(\ref{eqdisp5bis})$, is dominated by 
$$\|Q^{3j+k}_yQ^j_xu\|_{L^2_TL^{\infty}_{xy}}\lesssim 
 \sum_{l=1}^{2^{2j+k}}\|\textbf{1}_{[b_{j,l},b_{j,l+1})}(t)Q^{3j+k}_yQ^j_xu\|_{L^2_TL^{\infty}_{xy}}$$
 By Duhamel's formula, for $t\in [b_{j,l},b_{j,l+1}],$ 
 $$u(t)=W_{(5)}(t-b_{j,l})[u(b_{j,l})]+\int_{b_{j,l}}^tW_{(5)}(t-s)[\partial_xf(s)]ds.$$ 
 It follows from the dispersive estimate  $(\ref{eqdispersive2})$ that 
 \begin{equation} \label{eqint3} 
 \begin{split} 
 &\|\textbf{1}_{[b_{j,l},b_{j,l+1})}(t)Q_y^{3j+k}Q_x^ju\|_{L^2_TL^{\infty}_{xy}}\\&\lesssim   \|\textbf{1}_{[b_{j,l},b_{j,l+1})}(t)W_{(5)}(t-b_{j,l})Q_y^{3j+k}Q_x^j u(b_{j,l})\|_{L^2_TL^{\infty}_{xy}}+\|\textbf{1}_{[b_{j,l},b_{j,l+1})}(t)\int_{b_{j,l}}^tW_{(5)}(s)Q_y^{3j+k}Q_x^j \partial_xf(s)ds\|_{L^2_TL^{\infty}_{xy}}\\& \lesssim  2^{(-\frac{29}{20}j-\frac{17}{20}k)}\|Q_y^{3j+k}Q_x^j u(b_{j,l})\|_{L^{2}_{xy}}+2^{(-\frac{29}{20}j-\frac{17}{20}k)}2^j\|\textbf{1}_{[b_{j,l},b_{j,l+1})}(t)Q_y^{3j+k}Q_x^j f\|_{L^1_TL^{2}_{xy}}.
 \end{split}
 \end{equation} 
 For the first term of the right-hand side of $(\ref{eqint3})$, we have 
 \begin{equation} \label{eq19}
 \begin{split} 
 &\sum_{l=1}^{2^{2j+k}}\|\textbf{1}_{[b_{j,l},b_{j,l+1})}(t)Q_y^{3j+k}Q_x^ju(b_{j,l})\|_{L^{2}_{xy}}
 \\&\lesssim \sum_{l=1}^{2^{2j+k}}2^{(-\frac{29}{20}j-\frac{17}{20}k)}2^{(-\frac{1}{10}+2\epsilon)j}2^{-(\frac{3}{20}-\epsilon)(3j+k)}\|\textbf{1}_{[b_{j,l},b_{j,l+1})}(t)Q_y^{3j+k}Q_x^jJ_x^{\frac{1}{10}-2\epsilon}J_y^{\frac{3}{20}+\epsilon}u(b_{j,l})\|_{L^{2}_{xy}}
 \\& \lesssim 2^{2j+k}2^{(-\frac{29}{20}j-\frac{17}{20}k)}2^{(-\frac{1}{10}+2\epsilon)j}2^{-(\frac{3}{20}-\epsilon)(3j+k)}\|Q_y^{3j+k}Q_x^jJ_x^{\frac{1}{10}-2\epsilon}J_y^{\frac{3}{20}+\epsilon}u\|_{L^{\infty}_{T}L^{2}_{xy}}
 \\& \lesssim 2^{-\epsilon (j+k)}\|J_x^{\frac{1}{10}-2\epsilon}J_y^{\frac{3}{20}+\epsilon}u\|_{L^{\infty}_TL^{2}_{xy}}.
 \end{split} 
 \end{equation} 
  For the second term of the right-hand side of $(\ref{eqint3})$ we have 
 \begin{equation} \label{eq20}
 \begin{split}
  &\sum_{l=1}^{2^{2j+k}}2^{-\frac{9}{20}j-\frac{17}{20}k}\|\textbf{1}_{[b_{j,l},b_{j,l+1})}(t)Q_y^{3j+k}Q_x^j f\|_{L^1_TL^{2}_{xy}} \\&\lesssim \sum_{l=1}^{2^{2j+k}}2^{-\epsilon(j+k)}\|\textbf{1}_{[b_{j,l},b_{j,l+1})}(t)Q_y^{3j+k}Q_x^j J_x^{-\frac{9}{20}+\epsilon}J_y^{-\frac{17}{20}+\epsilon}f\|_{L^1_TL^{2}_{xy}}\\& \lesssim 2^{-\epsilon (j+k)}\sum_{l=1}^{2^{2j+k}}\|\textbf{1}_{[b_{j,l},b_{j,l+1})}(t)Q_y^{3j+k}Q_x^j J_x^{-\frac{9}{20}+\epsilon}J_y^{-\frac{17}{20}+\epsilon}f\|_{L^1_TL^{2}_{xy}}\\&\lesssim 2^{-\epsilon (j+k)}\|Q_y^{3j+k}Q_x^j J_x^{-\frac{9}{20}+\epsilon}J_y^{-\frac{17}{20}+\epsilon}f\|_{L^1_TL^{2}_{xy}}\\& \lesssim2^{-\epsilon (j+k)}\|J_x^{-\frac{9}{20}+\epsilon}J_y^{-\frac{17}{20}+\epsilon}f\|_{L^1_TL^{2}_{xy}}.
  \end{split}
  \end{equation}
  Therefore, $(\ref{eq19})$ and $(\ref{eq20})$ give $(\ref{eqdisp5bis}).$
 \end{proof} 
 
 \begin{corollary}
  Assume $u \in C^1([-T,T]:H^{\infty}(\mathbb{R}\times\mathbb{T})),$ $f \in C([-T,T]:H^{\infty}(\mathbb{R}\times\mathbb{T})),$ and 
 $$[\partial_t-\partial_x^5-\partial_x^{-1}\partial_y^2]u=\partial_x f \mbox{ on } \mathbb{R}\times\mathbb{T}\times [-T,T].$$ Then, for any $\epsilon>0$ we have
 \begin{equation*} 
 \begin{split} 
 \|u\|_{L^2_TL^{\infty}_{xy}}+\|\partial_{x}u\|_{L^2_TL^{\infty}_{xy}}\lesssim_{\epsilon}  \|J_x^{\frac{31}{20}+\epsilon}u\|_{L^{\infty}_TL^{2}_{xy}}+\|J_x^{\frac{11}{10}-2\epsilon}J_y^{\frac{3}{20}+\epsilon}u\|_{L^{\infty}_TL^{2}_{xy}}+\|J_x^{\frac{11}{20}+\epsilon}f\|_{L^{1}_TL^{2}_{xy}}.
 \end{split} 
 \end{equation*}
 \end{corollary}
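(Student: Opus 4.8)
The plan is to deduce the corollary directly from Proposition \ref{linearestimate5RT} by treating the two pieces of the left-hand side separately. For the term $\|u\|_{L^2_TL^{\infty}_{xy}}$ I would simply invoke Proposition \ref{linearestimate5RT} as it stands and then observe that each weight appearing on its right-hand side is dominated by the corresponding weight in the corollary: since $J_x=\langle D_x\rangle\geq 1$ as a Fourier multiplier, one has $\|J_x^{\frac{11}{20}+\epsilon}u\|_{L^{\infty}_TL^2_{xy}}\leq \|J_x^{\frac{31}{20}+\epsilon}u\|_{L^{\infty}_TL^2_{xy}}$, $\|J_x^{\frac{1}{10}-2\epsilon}J_y^{\frac{3}{20}+\epsilon}u\|_{L^{\infty}_TL^2_{xy}}\leq \|J_x^{\frac{11}{10}-2\epsilon}J_y^{\frac{3}{20}+\epsilon}u\|_{L^{\infty}_TL^2_{xy}}$, and $\|J_x^{-\frac{9}{20}+\epsilon}f\|_{L^1_TL^2_{xy}}\leq \|J_x^{\frac{11}{20}+\epsilon}f\|_{L^1_TL^2_{xy}}$. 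This already bounds the first summand by the right-hand side of the corollary.

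For the term $\|\partial_x u\|_{L^2_TL^{\infty}_{xy}}$ the key observation is that $\partial_x$ commutes with the linear operator $\partial_t-\partial_x^5-\partial_x^{-1}\partial_y^2$. Setting $v:=\partial_x u$, which still lies in $C([-T,T]:H^{\infty}(\mathbb{R}\times\mathbb{T}))$ because $u\in C^1([-T,T]:H^{\infty})$ and $\partial_x$ maps $H^{\infty}$ into itself, I would apply $\partial_x$ to the equation for $u$ to obtain
$$[\partial_t-\partial_x^5-\partial_x^{-1}\partial_y^2]v=\partial_x(\partial_x f).$$
Thus $v$ solves an equation of exactly the same form as in Proposition \ref{linearestimate5RT}, now with $\partial_x f$ playing the role of the forcing function $f$.

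Applying Proposition \ref{linearestimate5RT} to $v$ with this forcing then yields
$$\|\partial_x u\|_{L^2_TL^{\infty}_{xy}}\lesssim_{\epsilon} \|J_x^{\frac{11}{20}+\epsilon}\partial_x u\|_{L^{\infty}_TL^2_{xy}}+\|J_x^{\frac{1}{10}-2\epsilon}J_y^{\frac{3}{20}+\epsilon}\partial_x u\|_{L^{\infty}_TL^2_{xy}}+\|J_x^{-\frac{9}{20}+\epsilon}\partial_x f\|_{L^1_TL^2_{xy}}.$$
To finish I would absorb the extra $x$-derivative into the $J_x$ weights using the elementary multiplier bound $|\xi|\leq\langle\xi\rangle$, i.e. $\|J_x^{a}\partial_x g\|_{L^2_{xy}}\leq\|J_x^{a+1}g\|_{L^2_{xy}}$ for any $a\in\mathbb{R}$. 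Each index then shifts up by one, $\frac{11}{20}+1=\frac{31}{20}$, $\frac{1}{10}+1=\frac{11}{10}$, and $-\frac{9}{20}+1=\frac{11}{20}$, producing precisely the three terms on the right-hand side of the corollary. Summing the two pieces completes the proof.

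I expect the only subtlety to be bookkeeping, namely verifying that the regularity of $v=\partial_x u$ meets the hypotheses of Proposition \ref{linearestimate5RT} and that the index arithmetic lands exactly on $\frac{31}{20}$, $\frac{11}{10}$, and $\frac{11}{20}$; there is no genuine analytic obstacle here, since all of the dispersive work has already been carried out in the proposition.
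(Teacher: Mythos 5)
Your proposal is correct and is exactly the intended derivation: the paper states this corollary without proof immediately after Proposition \ref{linearestimate5RT}, and the evident argument is precisely yours --- apply the proposition to $u$, then to $v=\partial_x u$ with forcing $\partial_x f$ (using that $\partial_x$ commutes with $\partial_t-\partial_x^5-\partial_x^{-1}\partial_y^2$), and absorb the extra derivative via $|\xi|\leq\langle\xi\rangle$, shifting each index up by one to $\frac{31}{20}$, $\frac{11}{10}$, and $\frac{11}{20}$. The index arithmetic and the regularity bookkeeping ($\partial_x u\in C([-T,T]:H^{\infty})$, $\partial_x f\in C([-T,T]:H^{\infty})$) both check out.
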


 We state now a commutator estimate for the operators $J_{x}^s.$

  \begin{lemma} \label{katoponce}
\begin{itemize}
\item[(a)] (\cite{KatoPonce}, Lemma X1) Let $m\geq 0$ and $f,g \in H^{m}(\mathbb{R})$. If $s\geq1$ then 
  $$\|J^s_{\mathbb{R}}(fg)-fJ^s_{\mathbb{R}}g\|_{L^2}\leq C_s [\|J^s_{\mathbb{R}}f\|_{L^2}\|g\|_{L^{\infty}}+(\|f\|_{L^{\infty}}+\|\partial f\|_{L^{\infty}})\|J^{s-1}_{\mathbb{R}}g\|_{L^2}].$$
  \item[(b)] (\cite{KenigPonceVega}, Theorem A.12) If $s\in(0,1)$ then   $$\|J^s_{\mathbb{R}}(fg)-fJ^s_{\mathbb{R}}g\|_{L^2}\leq C_s \|J^s_{\mathbb{R}}f\|_{L^2}\|g\|_{L^{\infty}}.$$   
  \item[(c)] Let $m\geq 0$ and $f,g \in H^{m}(M)$, where $M$ is either $\mathbb{R}$ or $\mathbb{T}$. If $s>0$ then  
  $$\|J^s_M(fg)\|_{L^2}\leq C_s \|J^s_Mf\|_{L^2}\|g\|_{L^{\infty}}+ \|J^s_Mg\|_{L^2}\|f\|_{L^{\infty}}.$$
\end{itemize}
  \end{lemma}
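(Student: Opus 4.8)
Parts (a) and (b) are quoted directly from \cite{KatoPonce} and \cite{KenigPonceVega}, so the only estimate requiring an argument is the fractional Leibniz inequality (c). The plan is to reduce (c) to a Littlewood--Paley paraproduct analysis that applies uniformly to $M=\mathbb{R}$ and $M=\mathbb{T}$ and to every $s>0$. As a preliminary remark, for $s\in(0,1)$ the bound is immediate from part (b): writing $J^s_M(fg)=fJ^s_Mg+(J^s_M(fg)-fJ^s_Mg)$, H\"older controls the first summand by $\|f\|_{L^\infty}\|J^s_Mg\|_{L^2}$ while (b) controls the commutator by $C_s\|J^s_Mf\|_{L^2}\|g\|_{L^\infty}$. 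The genuine difficulty is the range $s\ge1$: there the commutator estimate (a) produces the spurious factors $(\|f\|_{L^\infty}+\|\partial f\|_{L^\infty})\|J^{s-1}_Mg\|_{L^2}$, which are not of product form, so (a) cannot be used as a black box and a direct frequency analysis is needed.

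For the general argument I would fix a Littlewood--Paley family $\{\Delta_k\}_{k\ge0}$, with $\Delta_k$ localizing to frequencies $|\xi|\sim 2^k$ for $k\ge1$ and $\Delta_0$ to $|\xi|\lesssim1$, together with the partial sums $S_k=\sum_{m\le k}\Delta_m$, and employ Bony's decomposition
$$fg=\sum_{k}(S_{k-2}f)(\Delta_k g)+\sum_{k}(S_{k-2}g)(\Delta_k f)+\sum_{|k-m|\le1}(\Delta_k f)(\Delta_m g)=:T_fg+T_gf+R(f,g).$$
The guiding principle is that $J^s_M$ acts on a frequency-$2^k$ block as multiplication by a quantity comparable to $2^{sk}$, so each paraproduct can be analysed by transferring this weight onto the high-frequency factor and then reassembling through the square-function characterization of $L^2$.

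The two paraproduct terms reproduce exactly the right-hand side of (c). For $T_fg$ the output of $(S_{k-2}f)(\Delta_k g)$ is frequency-localized near $2^k$, so $J^s_M$ costs $2^{sk}$; since $\|S_{k-2}f\|_{L^\infty}\lesssim\|f\|_{L^\infty}$, orthogonality and Plancherel yield
$$\|J^s_M T_fg\|_{L^2}^2\lesssim\|f\|_{L^\infty}^2\sum_k 2^{2sk}\|\Delta_k g\|_{L^2}^2\approx\|f\|_{L^\infty}^2\|J^s_Mg\|_{L^2}^2,$$
and symmetrically $\|J^s_M T_gf\|_{L^2}\lesssim\|g\|_{L^\infty}\|J^s_Mf\|_{L^2}$. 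For the resonant term the product $(\Delta_k f)(\Delta_m g)$ with $|k-m|\le1$ is supported in $|\xi|\lesssim2^k$, so $\Delta_l R(f,g)$ is nonzero only for $l\le k+C$; bounding $\|\Delta_l[(\Delta_kf)(\Delta_mg)]\|_{L^2}\lesssim\|\Delta_kf\|_{L^2}\|g\|_{L^\infty}$ and summing the weight $\sum_{l\le k+C}2^{sl}\lesssim 2^{sk}$, which converges precisely because $s>0$, followed by Cauchy--Schwarz in $k$, produces $\|g\|_{L^\infty}\|J^s_Mf\|_{L^2}$ (and, distributing the $L^\infty$ norm the other way, $\|f\|_{L^\infty}\|J^s_Mg\|_{L^2}$).

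The main obstacle is the resonant block: unlike the paraproducts, it feeds high frequencies of $f$ and $g$ into arbitrarily low output frequencies, and the convergence of the dyadic sum $\sum_{l\le k}2^{s(l-k)}$ is exactly where the hypothesis $s>0$ enters, so the estimate genuinely degenerates at $s=0$. A secondary point requiring care is that the whole Littlewood--Paley apparatus --- the square-function equivalence and Bernstein's inequality --- must be invoked in the periodic variable as well; on $\mathbb{T}$ one uses the discrete Littlewood--Paley decomposition in $n$, for which the same orthogonality and Bernstein bounds hold, so the argument above transfers with no essential change.
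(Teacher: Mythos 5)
Your proposal is correct, but note that the paper does not actually prove this lemma at all: parts (a) and (b) are quoted from \cite{KatoPonce} and \cite{KenigPonceVega}, and part (c) is recorded as a standard fractional Leibniz estimate with no argument given. What you supply is a self-contained paraproduct proof of (c), and it is the right one: Bony's decomposition $fg=T_fg+T_gf+R(f,g)$, with $J^s_M$ acting like $2^{sk}$ on a block at frequency $2^k$, the two paraproducts yielding exactly $\|f\|_{L^{\infty}}\|J^s_Mg\|_{L^2}$ and $\|g\|_{L^{\infty}}\|J^s_Mf\|_{L^2}$ by almost orthogonality, and the resonant piece handled by the convergence of $\sum_{l\le k+C}2^{s(l-k)}$, which is where $s>0$ enters. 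You also correctly identify why (a) cannot be used as a black box for $s\ge1$ (the term $(\|f\|_{L^{\infty}}+\|\partial f\|_{L^{\infty}})\|J^{s-1}_{\mathbb{R}}g\|_{L^2}$ is not of product form), and your observation that the argument transfers to $\mathbb{T}$ via the discrete Littlewood--Paley decomposition is exactly what the paper needs, since it applies (c) on $\mathbb{R}\times\mathbb{T}$; the uniform $L^{\infty}$-boundedness of $S_k$ and $\Delta_k$ on the torus (kernels uniformly in $L^1(\mathbb{T})$ by Poisson summation) is the one ingredient worth making explicit there. Two small points of hygiene: your reduction of the case $s\in(0,1)$ to part (b) is only available for $M=\mathbb{R}$, since (b) is stated on the line --- but this is harmless because your general paraproduct argument already covers all $s>0$ on both domains; and in the resonant estimate the step ``sum the weight $\sum_{l\le k+C}2^{sl}\lesssim 2^{sk}$ followed by Cauchy--Schwarz in $k$'' should properly be phrased as a Schur/Young bound for the kernel $2^{s(l-k)}\mathbf{1}_{\{l\le k+C\}}$ acting between the $\ell^2$-sequences $\bigl(2^{sk}\|\Delta_kf\|_{L^2}\bigr)_k$ and the output blocks, which is what actually produces $\|g\|_{L^{\infty}}\|J^s_Mf\|_{L^2}$; as written you mix an $\ell^1$-in-$l$ bound with the square-function norm. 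Neither issue affects the validity of the approach: your proof is a more general and self-contained substitute for the paper's citations.
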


 We are going to bound $g(T)=\|u\|_{L^2_TL^{\infty}_{xy}}^{2}+\|\partial_x u\|_{L^2_TL^{\infty}_{xy}}^{2}.$ We state the eenergy estimate: 
 \begin{lemma} \label{energyestimateRT}
 Suppose $u \in C([-T,T]:Z^s(\mathbb{R}\times\mathbb{T}))$ satisfies the initial value problem  (\ref{eqmkp5RT}), with initial data $\phi \in Z^s(\mathbb{R}\times\mathbb{T})$. Then we have $$\|u\|_{L^{\infty}_TZ^{s}}\lesssim \|\phi \|_{Z^{s}}\exp(g(T)).$$
 \end{lemma}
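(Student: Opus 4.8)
The plan is to establish the a priori differential inequality
\[
\frac{d}{dt}\|u(t)\|_{Z^s}^2 \lesssim \big(\|u(t)\|_{L^{\infty}_{xy}}^2 + \|\partial_x u(t)\|_{L^{\infty}_{xy}}^2\big)\|u(t)\|_{Z^s}^2
\]
for smooth solutions and then close the argument by Gronwall's inequality. Integrating in time gives $\|u(t)\|_{Z^s}^2 \lesssim \|\phi\|_{Z^s}^2\exp\!\big(C\int_0^t(\|u\|_{L^{\infty}_{xy}}^2 + \|\partial_x u\|_{L^{\infty}_{xy}}^2)\,ds\big)$; since the exponent is exactly $C\,g(T)$ with $g(T)=\|u\|_{L^2_TL^{\infty}_{xy}}^2+\|\partial_x u\|_{L^2_TL^{\infty}_{xy}}^2$, taking the supremum over $t$ and a square root yields the claim (the constant in the exponent being absorbed as usual). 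The estimate is first proved for $H^{\infty}$ solutions and then transferred to $Z^s$ by the same regularization and density argument already used for the local theory.

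To obtain the differential inequality I would control separately the $x$-regularity piece $\|J_x^s u\|_{L^2_{xy}}^2$ and the transverse piece $\|\partial_x^{-1}\partial_y u\|_{L^2_{xy}}^2$ of $\|u\|_{Z^s}^2$ (the $L^2$ piece is dominated by the former and is in any case conserved, since $\partial_x^5$ and $\partial_x^{-1}\partial_y^2$ are skew-adjoint and $\int u\,\partial_x(u^3)=\tfrac14\int\partial_x(u^4)=0$). For the first piece, apply $J_x^s$ to (\ref{eqmkp5RT}) and pair with $J_x^s u$ in $L^2_{xy}$; the two linear terms drop by skew-adjointness, leaving $-\langle J_x^s u,\,J_x^s\partial_x(u^3)\rangle$. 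Writing $\partial_x(u^3)=3u^2\partial_x u$ and decomposing $J_x^s(3u^2\partial_x u)=3u^2\,J_x^s\partial_x u+[J_x^s,3u^2]\partial_x u$, the principal term integrates by parts to $-3\int u\,\partial_x u\,(J_x^s u)^2$, bounded by $\|u\|_{L^{\infty}_{xy}}\|\partial_x u\|_{L^{\infty}_{xy}}\|J_x^s u\|_{L^2_{xy}}^2$; the commutator is estimated by Lemma \ref{katoponce}(a) in the variable $x$ (with $y\in\mathbb{T}$ frozen), together with the product bound $\|J_x^s(u^2)\|_{L^2}\lesssim\|u\|_{L^{\infty}}\|J_x^s u\|_{L^2}$ from Lemma \ref{katoponce}(c), and is then squared and integrated in $y$. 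Since $s>\frac{31}{20}\ge 1$, Lemma \ref{katoponce}(a) applies, and by the AM--GM inequality every resulting term is $\lesssim(\|u\|_{L^{\infty}_{xy}}^2+\|\partial_x u\|_{L^{\infty}_{xy}}^2)\|J_x^s u\|_{L^2_{xy}}^2$.

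For the transverse piece, apply $\partial_x^{-1}\partial_y$ to (\ref{eqmkp5RT}) and pair with $v:=\partial_x^{-1}\partial_y u$. Once more the linear contributions vanish, and since $\partial_x^{-1}\partial_y\partial_x(u^3)=\partial_y(u^3)=3u^2\partial_y u$, the nonlinear term is $-3\int v\,u^2\partial_y u$. The useful observation is that $\partial_y u=\partial_x v$, so after one integration by parts this equals $\tfrac32\int\partial_x(u^2)\,v^2=3\int u\,\partial_x u\,v^2$, which is bounded directly by $\|u\|_{L^{\infty}_{xy}}\|\partial_x u\|_{L^{\infty}_{xy}}\|\partial_x^{-1}\partial_y u\|_{L^2_{xy}}^2$; no commutator is needed. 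Adding the two pieces produces the differential inequality.

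The main obstacle is the top-order contribution in the $x$-piece, where $J_x^s$ falling on the cubic nonlinearity threatens a loss of $s$ derivatives. This is exactly what the commutator structure resolves: the dangerous piece $3u^2 J_x^s\partial_x u$, carrying $s+1$ derivatives, is removed by the integration by parts that converts the extra $\partial_x$ into the bounded factor $\partial_x(u^2)$, while the genuinely nonlocal remainder is absorbed by the Kato--Ponce estimate, whose right side only couples $L^{\infty}_{xy}$ norms of $u$ and $\partial_x u$ to the $Z^s$ norm. A secondary technical point is that parts (a)--(b) of Lemma \ref{katoponce} are one-dimensional, so they are applied pointwise in the periodic variable and then integrated over $\mathbb{T}$; this is legitimate precisely because $J_x^s$ acts only in $x$.
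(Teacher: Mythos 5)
Your proof is correct and follows essentially the same route as the paper's: you apply $J_x^s$ (respectively $\partial_x^{-1}\partial_y$) to the equation, pair with $J_x^s u$ (respectively $\partial_x^{-1}\partial_y u$), handle the top-order term by the commutator decomposition and the Kato--Ponce estimates of Lemma \ref{katoponce} together with one integration by parts, and close with Gr\"onwall to get the factor $\exp(g(T))$. Your write-up merely makes explicit details the paper leaves implicit (the identity $\partial_y u=\partial_x(\partial_x^{-1}\partial_y u)$ in the transverse piece, the application of the one-dimensional commutator estimate in $x$ with $y$ frozen), and the only blemish is the harmless constant in $\int u\,\partial_x(u^3)=\tfrac{3}{4}\int\partial_x(u^4)=0$, which does not affect anything.
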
 
 
 \begin{proof} 
  We apply to  (\ref{eqmkp5RT}), the operator $J^s_x$ and then we multiply by $J^s_x u$. By integration by parts and applying Lemma \ref{katoponce} twice we get
\begin{equation*} 
\begin{split} 
\frac{d}{dt}\|J_x^su\|^2_{L^2_{xy}}&=\int J_x^s uJ^s_x(u^2\partial_x u)=\int J^s_x u[J^s_x (u^2\partial_x u)-u^2J^s_x\partial_x u]+\int u^2J^s_x uJ^s_x \partial_x u \\& \lesssim \|J^s_x u\|^2_{L^2_{xy}}(\|u\|^2_{L^{\infty}_{xy}}+\|u\|_{L^{\infty}_{xy}}\|\partial_x u\|_{L^{\infty}_{xy}})\lesssim \|J^s_x u(t)\|^2_{L^2_{xy}}(\|u(t)\|^2_{L^{\infty}_{xy}}+\|\partial_x u(t)\|^2_{L^{\infty}_{xy}})
\end{split}
\end{equation*}
therefore, by Gr{\"o}nwall's inequality, we get that 
\begin{equation}\label{Jx}
\|J_x^s u\|_{L^{\infty}_TL^2_{xy}}\lesssim \|J_x^s\phi\|_{L^2_{xy}} \mbox{exp}(g(T)).
\end{equation}
  Again,  if we apply to (\ref{eqmkp5RT}) the operator $\partial_{x}^{-1}\partial_y$ and then we multiply by $\partial_{x}^{-1}\partial_y u$,  we obtain integrating by parts, 
  \begin{equation*}
  \frac{d}{dt}\|\partial_{x}^{-1}\partial_y u\|^2_{L^2_{xy}}=-\int_{\mathbb{R}^2} \partial_{x}^{-1}\partial_{y} u \partial_{x}^{-1}\partial_y\partial_{x}(u^3)=\int_{\mathbb{R}^2} (\partial_{x}^{-1}\partial_{y}u)^2u\partial_{x}u\leq \|\partial_{x}^{-1}\partial_{y}u\|_{L^2_{xy}}^{2}(\|u\|_{L^{\infty}_{xy}}^2+\|\partial_{x}u\|_{L^{\infty}_{xy}}^{2})
  \end{equation*} 
hence, by Gr{\"o}nwall's inequality, we get 
$$\|\partial_{x}^{-1}\partial_yu(t)\|_{L^2_{xy}}\lesssim \|\partial_{x}^{-1}\partial_y \phi\|_{L^2_{xy}}\exp(g(T))$$
which yields that $\|u\|_{L^{\infty}_TZ^{s}}\lesssim \|\phi \|_{Z^{s}}\mbox{exp}(g(T)).$ 
 \end{proof} 

 \begin{lemma} \label{aprioriRT}
    Let $u$ be a solution of \eqref{eqmkp5RT} with initial data $u_0\in H^{\infty}(\mathbb{R}\times\mathbb{T})$ with $\|u_0\|_{Z^s}$ small enough. Then, for any $s>\frac{31}{20},$ there exists $T=T(\|u_0\|_{Z^s})$ such that  
    $$\|g(T)\|_{L^{\infty}_{T}Z^s}\leq C(T, \|u_0\|_{Z^s}).$$
 \end{lemma}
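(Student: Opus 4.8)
The plan is to close a bootstrap argument that links the two a priori bounds already in place: the energy estimate of Lemma \ref{energyestimateRT} and the smoothing/Strichartz estimate in the Corollary to Proposition \ref{linearestimate5RT}. Writing \eqref{eqmkp5RT} in the form $[\partial_t-\partial_x^5-\partial_x^{-1}\partial_y^2]u=\partial_x f$ with $f=-u^3$, and using that $g(T)^{1/2}\approx\|u\|_{L^2_TL^\infty_{xy}}+\|\partial_x u\|_{L^2_TL^\infty_{xy}}$, the Corollary yields
\[
 g(T)^{1/2}\lesssim_\epsilon \|J_x^{\frac{31}{20}+\epsilon}u\|_{L^\infty_TL^2_{xy}}+\|J_x^{\frac{11}{10}-2\epsilon}J_y^{\frac{3}{20}+\epsilon}u\|_{L^\infty_TL^2_{xy}}+\|J_x^{\frac{11}{20}+\epsilon}(u^3)\|_{L^1_TL^2_{xy}}.
\]
It then suffices to dominate the three terms on the right by $\|u\|_{L^\infty_TZ^s}$ and $g(T)$, and to feed the result back into the energy estimate.

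For the first term, once $\epsilon$ is small we have $\tfrac{31}{20}+\epsilon\le s$, so $\|J_x^{\frac{31}{20}+\epsilon}u\|_{L^2_{xy}}\lesssim\|u\|_{Z^s}$ from the definition of $Z^s$. The second, genuinely anisotropic, term is the one I would handle by interpolating in frequency between the longitudinal smoothing $J_x^s$ and the transverse quantity $\partial_x^{-1}\partial_y$ that $Z^s$ controls. On the transverse zero mode $n=0$ the factor $J_y^{\frac{3}{20}+\epsilon}$ is harmless and the term is absorbed by $\|J_x^s u\|_{L^2_{xy}}$; on $|n|\ge 1$, writing $\langle n\rangle^{\frac{3}{20}+\epsilon}\sim(|n|/|\xi|)^{\frac{3}{20}+\epsilon}\langle\xi\rangle^{\frac{3}{20}+\epsilon}$ and applying H\"older in $(\xi,n)$ with $\theta=\tfrac{3}{20}+\epsilon$ gives
\[
 \|J_x^{\frac{11}{10}-2\epsilon}J_y^{\frac{3}{20}+\epsilon}u\|_{L^2_{xy}}\lesssim \|J_x^{s}u\|_{L^2_{xy}}^{1-\theta}\,\|\partial_x^{-1}\partial_y u\|_{L^2_{xy}}^{\theta}\lesssim\|u\|_{Z^s},
\]
valid as soon as $s(1-\theta)-\theta\ge \tfrac{11}{10}-2\epsilon$, i.e. $s\ge(\tfrac54-\epsilon)/(\tfrac{17}{20}-\epsilon)$. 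As $\epsilon\to 0$ this reads $s\ge\frac{25}{17}<\frac{31}{20}$, so the anisotropic constraint is strictly slack and the operative threshold remains $s>\frac{31}{20}$ coming from the first term.

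For the nonlinear term I would iterate the fractional Leibniz rule Lemma \ref{katoponce}(c) (applied in $x$, then $L^2_y$) with $f=u$, $g=u^2$ and once more, obtaining $\|J_x^{\frac{11}{20}+\epsilon}(u^3)\|_{L^2_{xy}}\lesssim\|J_x^{\frac{11}{20}+\epsilon}u\|_{L^2_{xy}}\|u\|_{L^\infty_{xy}}^2\lesssim\|u\|_{Z^s}\|u\|_{L^\infty_{xy}}^2$, using $\tfrac{11}{20}+\epsilon\le s$. Integrating in time and applying H\"older gives $\|J_x^{\frac{11}{20}+\epsilon}(u^3)\|_{L^1_TL^2_{xy}}\lesssim\|u\|_{L^\infty_TZ^s}\|u\|_{L^2_TL^\infty_{xy}}^2\le\|u\|_{L^\infty_TZ^s}\,g(T)$. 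Collecting the three bounds produces $g(T)^{1/2}\lesssim_\epsilon\|u\|_{L^\infty_TZ^s}\,(1+g(T))$, and inserting the energy bound $\|u\|_{L^\infty_TZ^s}\lesssim\|\phi\|_{Z^s}\exp(g(T))$ of Lemma \ref{energyestimateRT} closes the system into the single functional inequality $g(T)^{1/2}\lesssim\|\phi\|_{Z^s}\exp(g(T))(1+g(T))$.

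I would conclude by a continuity argument in $T$. Because $u\in C([-T,T]:H^\infty)$, the map $T\mapsto g(T)$ is continuous, nondecreasing, with $g(0)=0$; under the smallness of $\|\phi\|_{Z^s}$ a standard bootstrap confined to a fixed small ball (say $g(T)\le\ln 2$) shows $g(T)$ cannot escape it on the interval of existence, giving a uniform bound on $g(T)$ and hence $\|u\|_{L^\infty_TZ^s}\le C(T,\|u_0\|_{Z^s})$. The main obstacle is the control of the mixed weight $J_x^{\frac{11}{10}-2\epsilon}J_y^{\frac{3}{20}+\epsilon}$ generated by the transverse part of the dispersion: one must ensure, via the exponent bookkeeping above, that it is absorbed by $Z^s$ at the same regularity $s>\frac{31}{20}$ forced by the longitudinal smoothing, rather than demanding a higher threshold.
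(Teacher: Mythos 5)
Your proposal is correct and follows essentially the same route as the paper's own proof: bound the cubic nonlinearity via the Kato--Ponce lemma, feed the Strichartz corollary of Proposition \ref{linearestimate5RT} into the energy estimate of Lemma \ref{energyestimateRT} to get a closed functional inequality of the form $g(T)\lesssim (1+g(T)^2)\exp(2g(T))\|u_0\|_{Z^s}^2$, and close by smallness plus a continuity argument. The only difference is cosmetic: you spell out the frequency-side interpolation showing $\|J_x^{\frac{11}{10}-2\epsilon}J_y^{\frac{3}{20}+\epsilon}u\|_{L^2_{xy}}\lesssim\|u\|_{Z^s}$ with the slack threshold $s\geq\frac{25}{17}<\frac{31}{20}$, a step the paper absorbs without comment into the bound by $\|u\|_{L^{\infty}_T Z^s}$.
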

\begin{proof}
     From Lemma \ref{katoponce}, by using AM-GM inequality we obtain that 
 $$\|J_{x}^{\frac{11}{20}+\epsilon}(u^3)\|_{L^{2}_{xy}}\lesssim \|J_{x}^{\frac{11}{20}+\epsilon}u\|_{L^{2}_{xy}}\|u\|_{L^{\infty}_{xy}}^{2}.$$
 Therefore, for $s>\frac{31}{20},$ by using AM-GM inequality we get that 
 $$\|u\|_{L^2_TL^{\infty}_{xy}}+\|\partial_{x}u\|_{L^2_TL^{\infty}_{xy}}\lesssim  \|u\|_{L^{\infty}_TZ^s}(1+\|u\|_{L^{2}_{T}L^{\infty}_{xy}}^2).$$
 From Lemma \ref{energyestimateRT}, we obtain 
 $g(T)\leq C(1+g(T)^2)\exp(2g(T))\|u_0\|_{Z^s}^{2},$
 and using the smallness of $\|u_0\|_{Z^s},$ by a continuity argument we obtain that 
 $$g(T)\leq C(T, \|u_0\|_{Z^s}).$$
\end{proof}

We start by stating a well-known local-wellposedness result from Iorio and Nunes (see \cite{IorioNunes}, Section 4):
  \begin{lemma}\label{IorioNunes}
 Assume $\phi \in H^{\infty}(\mathbb{R}\times \mathbb{T})$. Then there is $T = T(\|\phi\|_{H^5}) > 0$ and a solution $u \in C([-T, T ] : H^{\infty}(\mathbb{R}\times \mathbb{T}) )$ of the initial value problem 
 \begin{equation*}
\begin{cases} \partial_t u-\partial^5_x u-\partial_x^{-1}\partial_y^2 u + u^2\partial_x u=0,\\
u(0,x,y)=\phi(x,y).
\end{cases}
\end{equation*} 
  \end{lemma}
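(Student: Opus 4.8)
The plan is to establish this by the parabolic (artificial viscosity) regularization method of Iorio and Nunes, exploited because the equation is quasilinear with a \emph{skew-adjoint} fifth-order dispersive part, and a smoothing viscous semigroup makes a Picard iteration feasible at the level of a single regularization parameter. First I would introduce, for $\mu>0$, the regularized problem
$$\partial_t u_\mu-\partial_x^5 u_\mu-\partial_x^{-1}\partial_y^2 u_\mu+u_\mu^2\partial_x u_\mu=-\mu(-\Delta)^2 u_\mu,\qquad u_\mu(0)=\phi,$$
whose linear symbol $i(\xi^5+n^2/\xi)-\mu(\xi^2+n^2)^2$ has real part tending to $-\infty$, so the associated semigroup is bounded and smoothing on every $H^m(\mathbb{R}\times\mathbb{T})$. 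Writing the nonlinearity as $u_\mu^2\partial_x u_\mu=\tfrac13\partial_x(u_\mu^3)$ in Duhamel's formula, the single $x$-derivative is absorbed by the viscous smoothing (the bound $\|\partial_x e^{\tau\mathcal{L}_\mu}\|_{H^m\to H^m}\lesssim(\mu\tau)^{-c}$ with $c<1$ is time-integrable near $\tau=0$), so a standard contraction argument yields, for each fixed $\mu$, a local smooth solution $u_\mu\in C([0,T_\mu];H^\infty)$.

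The core of the argument is a set of a priori energy estimates uniform in $\mu$ whose existence time is dictated \emph{solely} by $\|\phi\|_{H^5}$. Applying $J^s=(1-\Delta)^{s/2}$ to the regularized equation and pairing with $J^s u_\mu$ in $L^2$, both dispersive operators $\partial_x^5$ and $\partial_x^{-1}\partial_y^2$ are skew-adjoint (their symbols $i\xi^5$ and $in^2/\xi$ are purely imaginary) and drop out, while the viscous term contributes $-\mu\|(-\Delta)J^su_\mu\|_{L^2}^2\le 0$. Hence only the nonlinear contribution must be controlled: splitting $J^s(u^2\partial_x u)-u^2J^s\partial_x u$ and invoking the Kato-Ponce commutator estimates in the spirit of Lemma \ref{katoponce}, one obtains
$$\frac{d}{dt}\|J^s u_\mu\|_{L^2}^2\lesssim\|J^s u_\mu\|_{L^2}^2\bigl(\|u_\mu\|_{L^\infty}^2+\|\partial_x u_\mu\|_{L^\infty}^2\bigr).$$
Since in two dimensions $H^5\hookrightarrow W^{1,\infty}$, the right-hand side at $s=5$ is bounded by $\|u_\mu\|_{H^5}^4$, so Gronwall closes the estimate on an interval $[0,T]$ with $T=T(\|\phi\|_{H^5})$ independent of $\mu$. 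Bootstrapping then propagates every higher norm: once $\|u_\mu\|_{W^{1,\infty}}$ is controlled on $[0,T]$ by the $H^5$ bound, the same differential inequality keeps $\|u_\mu\|_{H^s}$ finite for all $s\ge 5$ on the same interval, so $u_\mu$ stays uniformly in $H^\infty$.

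With these uniform bounds I would pass to the limit $\mu\to0$. The family $\{u_\mu\}$ is bounded in $L^\infty([0,T];H^s)$ for every $s$, and from the equation $\{\partial_t u_\mu\}$ is bounded in $L^\infty([0,T];H^{s-5})$ uniformly in $\mu$ (the term $\mu(-\Delta)^2u_\mu$ being $O(\mu)$); the Aubin-Lions lemma then furnishes a subsequence converging strongly in $C([0,T];H^{s'}_{\mathrm{loc}})$ for $s'<s$, which suffices to pass to the limit in the quadratic-derivative nonlinearity and identify the limit $u\in C([0,T];H^\infty)$ as a solution of the original problem. Uniqueness follows from an energy estimate for the difference of two solutions, and the continuity of the data-to-solution map from the Bona-Smith argument as in \cite{BonaSmith}.

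I expect the principal obstacle to lie in arranging the functional setting so that the nonlocal operator $\partial_x^{-1}\partial_y^2$ spoils neither the smoothing of the viscous semigroup nor the skew-adjointness used in the energy identity: on $\mathbb{R}\times\mathbb{T}$ the multiplier $n^2/\xi$ is singular as $\xi\to0$, so one must work in a space adapted to this anisotropy and verify that the commutator and Sobolev estimates remain uniform down to low $x$-frequencies (the data's decay at $\xi=0$ being tacitly encoded in the $H^\infty$ structure, consistent with the $Z^s$ weight). Once this setting is fixed, the remaining steps are the routine quasilinear energy method, which is exactly why the result may be quoted directly from \cite{IorioNunes}.
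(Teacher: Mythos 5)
Your proposal is correct and coincides with the paper's treatment: the paper gives no proof of this lemma, quoting it directly from Iorio--Nunes, and your sketch is precisely the parabolic-regularization argument of that reference (viscous approximation with a smoothing semigroup, $\mu$-uniform $H^5$ energy estimates via skew-adjointness and Kato--Ponce commutators, bootstrap of higher norms on the same interval, and passage to the limit). Your closing caveat about the low-$x$-frequency singularity of $n^2/\xi$ is also consistent with how the paper actually uses the lemma, since it applies it only to regularized data in $Z^s \cap H^{\infty}_{-1}(\mathbb{R}\times\mathbb{T})$, where the $\partial_x^{-1}$ structure is available.
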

We proceed to prove the local well-posedness result. 
\begin{theorem} \label{LWP5RT}
 The initial value problem (\ref{eqmkp5RT}) is locally well-posed in $Z^{s}(\mathbb{R}\times\mathbb{T}),s>\frac{31}{20}.$ More precisely, given $u_0 \in Z^{s}(\mathbb{R}\times\mathbb{T}),s>\frac{31}{20}$, with $\|u_0\|_{Z^s}$ small enough, there exists $T=T(\|u_0\|_{Z^{s}})$ and a unique solution $u$ to the IVP such that $u \in C([0,T]:Z^{s}(\mathbb{R}\times \mathbb{T}))$, $u, \partial_x u\in L^2_TL^{\infty}_{xy}.$ Moreover, the mapping $u_0 \rightarrow u \in C([0,T]:Z^{s}(\mathbb{R}\times \mathbb{T}))$ is continuous. Also, we have conservation of the quantities 
 $$M(u(t))=\int_{\mathbb{R}\times\mathbb{T}}u(t)^2dxdy \text{  and  }E_{(s)}(u(t))=\frac{1}{2}\int_{\mathbb{R}\times\mathbb{T}}|D_{x}^su(t)|^2+[\partial_{x}^{-1}\partial_{y}u(t)]^2-\frac{1}{2}\int_{\mathbb{R}\times\mathbb{T}}u(t)^4.$$
 \end{theorem}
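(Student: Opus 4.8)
The plan is to reproduce on the cylinder the approximation scheme that underlies Proposition \ref{mainproposition} in the $\mathbb{R}^2$ case: first solve the equation for smooth data and extract a priori bounds depending only on the $Z^s$ norm, then recover rough data by density. The two ingredients that make this work have already been isolated, namely the energy estimate of Lemma \ref{energyestimateRT}, which controls $\|u\|_{L^\infty_T Z^s}$ by $\|\phi\|_{Z^s}\exp(g(T))$, and the a priori estimate of Lemma \ref{aprioriRT}, which bounds $g(T)=\|u\|_{L^2_TL^\infty_{xy}}^2+\|\partial_x u\|_{L^2_TL^\infty_{xy}}^2$ for small data. The whole argument hinges on closing these two bounds simultaneously by a continuity argument.

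First I would treat smooth initial data $u_0\in H^\infty(\mathbb{R}\times\mathbb{T})$. Lemma \ref{IorioNunes} produces a smooth solution on a short interval whose length a priori depends on $\|u_0\|_{H^5}$. To upgrade this to an interval depending only on $\|u_0\|_{Z^s}$ I would run a bootstrap: assuming $g(T)$ stays below a fixed threshold, Lemma \ref{aprioriRT} together with the corollary of Proposition \ref{linearestimate5RT} and the Kato--Ponce estimates of Lemma \ref{katoponce} yield $g(T)\le C(1+g(T)^2)\exp(2g(T))\|u_0\|_{Z^s}^2$, so that smallness of $\|u_0\|_{Z^s}$ forces $g(T)\lesssim\|u_0\|_{Z^s}^2$ on $[0,T]$ for some $T=T(\|u_0\|_{Z^s})$; feeding this into Lemma \ref{energyestimateRT} gives $\|u\|_{L^\infty_TZ^s}\lesssim\|u_0\|_{Z^s}$. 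Since the identical energy computation carried out at every higher regularity controls $\|u\|_{L^\infty_TH^\sigma}$ by the same factor $\exp(g(T))$, finiteness of $g(T)$ serves as a blow-up criterion and the smooth solution persists on all of $[0,T]$ with the stated bounds and with $u,\partial_x u\in L^2_TL^\infty_{xy}$.

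Next I would pass to general $u_0\in Z^s$, $s>\tfrac{31}{20}$. Mollifying, I obtain smooth data $u_0^{(n)}\to u_0$ in $Z^s$ with $\|u_0^{(n)}\|_{Z^s}$ uniformly small, hence smooth solutions $u^{(n)}$ on a common interval $[0,T]$ with uniform bounds in $L^\infty_TZ^s$ and in $L^2_TL^\infty_{xy}$ (for $u^{(n)}$ and $\partial_x u^{(n)}$). To produce a limit I would show that $(u^{(n)})$ is Cauchy in $C([0,T]:L^2_{xy})$: the difference $w=u^{(n)}-u^{(m)}$ solves a linear equation whose forcing is a product of the $u^{(k)}$ and $\partial_x w$, and an $L^2$ energy estimate using only the $L^2_TL^\infty_{xy}$ control of $u^{(k)},\partial_x u^{(k)}$ together with Gr\"onwall closes the difference. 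The weak-$*$ limit inherits the $Z^s$ bound and, by interpolation, converges strongly in $Z^{s'}$ for $s'<s$, which suffices to pass to the limit in the equation; the same $L^2$ difference estimate gives uniqueness.

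The delicate point, and the one I expect to be the main obstacle, is continuous dependence of the flow map in the $Z^s$ topology. Because the nonlinearity $\partial_x(u^3)$ loses one $x$-derivative, a direct energy estimate for the difference of two solutions at the top regularity $Z^s$ does not close, and the $L^2$ difference estimate above only yields continuity into a weaker space. I would remedy this with the Bona--Smith argument of \cite{BonaSmith}: comparing the solution with data $u_0$ to the one with regularized data, one exploits that the high-frequency tail of $u_0$ is small in $Z^s$ to gain the missing derivative in the error term, while the smoothing estimate of Proposition \ref{linearestimate5RT} recovers the derivative lost by the nonlinearity; combined with the uniform bounds this upgrades convergence to the full $Z^s$ norm. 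Finally, for smooth solutions the conservation of $M(u(t))$ follows by pairing the equation with $u$ and integrating, using the skew-adjointness of the linear part and the structure of $\partial_x(u^3)$, while conservation of the energy $E_{(s)}(u(t))$ follows from the Hamiltonian structure by the analogous integration by parts; both identities then pass to $Z^s$ data through the approximation scheme and the continuity of the flow just established.
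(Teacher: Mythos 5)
Your proposal follows essentially the same route as the paper's proof: smooth solutions from Lemma \ref{IorioNunes} combined with the a priori bounds of Lemmas \ref{energyestimateRT} and \ref{aprioriRT}, an $L^2$ Gr\"onwall estimate for differences to pass to rough data in $Z^s$ and obtain uniqueness, and a Bona--Smith frequency-truncation argument in which the smoothing estimate of Proposition \ref{linearestimate5RT} supplies the improved decay in the truncation parameter needed to close the top-order $Z^s$ difference estimate, yielding both $u\in C([0,T]:Z^s)$ and continuity of the flow map. This matches the paper's three-step argument (adapted from Lemma 9.4.7 of \cite{KenigIonescu}), including the passage to the limit for the conserved quantities $M$ and $E_{(s)}$.
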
 

\begin{proof}
\textbf{Step 1.}

Let $u_0 \in Z^{s}(\mathbb{R}\times\mathbb{T})$ and fixed $u_{0,\epsilon}\in Z^{s}(\mathbb{R}\times\mathbb{T})\cap H^{\infty}_{-1}(\mathbb{R}\times\mathbb{T})$ such that $\|u_0-u_{0,\epsilon}\|_{H^{s,s}}\rightarrow 0$ and $\|u_{0,\epsilon}\|_{Z^{s}}\leq 2\|u_0\|_{Z^{s}}.$ 

We know by Lemma \ref{IorioNunes} that $u_{0,\epsilon}$ gives a unique solution $u_{\epsilon}.$ We have by the a priori bound that $\|u_{\epsilon}\|_{L^2_TL^{\infty}_{xy}}+\|\partial_x u_{\epsilon}\|_{L^2_TL^{\infty}_{xy}}+\|\partial_y u_{\epsilon}\|_{L^2_TL^{\infty}_{xy}} \leq C_T$ and by the previous result, $\mbox{sup}_{0<t<T}\|u_{\epsilon}\|_{H^{s,s}}\leq C_T.$ 

 Henceforth, 
  \begin{equation*} 
  \begin{split} 
  \partial_t \|u_\varepsilon -u_{\varepsilon'}\|^2_{L^2}&=\int(u_\varepsilon-u_{\varepsilon'}) \partial_x(\frac{u_\varepsilon^3}{3}-\frac{u_{\varepsilon'}^3}{3})\\&=\int\partial_x(u_\varepsilon-u_{\varepsilon'})\cdot(u_\varepsilon-u_{\varepsilon'})\frac{u_{\varepsilon}^2+u_{\varepsilon}u_{\varepsilon'}+u_{\varepsilon'}^2}{3}=\\ 
  \\&=\int(u_\varepsilon-u_{\varepsilon'})^2\partial_x[\frac{u_{\varepsilon}^2+u_{\varepsilon}u_{\varepsilon'}+u_{\varepsilon'}^2}{3}]\\&\leq \| u_\varepsilon -u_{\varepsilon'}\|^2_{L^2}\big(\|u_\varepsilon\|^2_{L^{\infty}_{xy}}+\|\partial_x u_\varepsilon\|^2_{L^{\infty}_{xy}}+\|u_{\varepsilon'}\|^2_{L^{\infty}_{xy}}+\|\partial_x u_{\varepsilon'}\|^2_{L^{\infty}_{xy}}).
  \end{split}
  \end{equation*} 
   and by Gr{\"o}nwall's inequality and by Lemma \ref{aprioriRT}
   $$\|u_{\epsilon}-u_{\epsilon'}\|^2_{L^{\infty}_TL^2_{xy}}\lesssim_T\|u_{0,\epsilon}-u_{0,\epsilon'}\|^2_{L^2_{xy}},$$ hence $\mbox{sup}_{0<t<T}\|u_{\epsilon}-u_{\epsilon'}\|_{L^2_{xy}}\rightarrow 0,$ hence we can find $u \in C([0,T]:Z^{s'}(\mathbb{R}\times\mathbb{T}))\cap L^{\infty}([0,T]:Z^{s}(\mathbb{R}\times\mathbb{T}))$ with $s'<s.$ The fact that $u$ is a solution of the IVP is clear now. Uniqueness also comes from the previous Gr{\"o}nwall's inequality. 
  
\end{proof} 

\textbf{Step 2.}

We adapt the proof of Lemma $9.4.7$ of \cite{KenigIonescu}.

For $\phi \in Z^{s}(\mathbb{R}\times\mathbb{T})$ with $s>\frac{31}{20}$, let $\phi_k=P^k\phi$ where $\widehat{P^kg}(\xi,n)=\widehat{g}(\xi,n)\cdot 1_{[0,k]}(|\xi|)\cdot1_{[0,k^{s+1}]}(|n|).$ Let 
$$h_{\phi}(k)=[\sum_{n \in \mathbb{Z}}\int _{|\xi|^s+|n|^{\frac{s}{s+1}}\geq k^{s}}|\hat{\phi}(\xi,n)|^2\{1+\xi^{2s}+\frac{n^2}{\xi^2}\}d\xi]^{\frac{1}{2}}. $$ 

Clearly, $h_{\phi}$ is nonincreasing in $k$ and $\lim_{k \rightarrow \infty}h_\phi(k)=0.$
By Plancherel's theorem and the AM-GM inequality, we obtain 
\begin{equation*}
\begin{split}
\|\phi-\phi_{k}\|_{L^2_{xy}}=\|\widehat{\phi}-\widehat{\phi}_k\|_{L^2_{xy}}&=[\sum_{|n|\geq k} \int_{|\xi|\geq k}|\hat{\phi}(\xi,n)|^2d\xi]^{\frac{1}{2}}\\&\leq \Big[\sum_{n \in \mathbb{Z}}\int_{|\xi|^s+|n|^{\frac{s}{s+1}} \geq k^s}|\hat{\phi}(\xi,n)|^2\frac{[1+\xi^{2s}+\frac{n^2}{\xi^2}]}{k^{2s}}d\xi\Big]^{\frac{1}{2}} \lesssim C(\|\phi\|_{Z^s})k^{-s}h_{\phi}(k).
\end{split}
\end{equation*}
By the same observations, we have that $\|J_x^s(\phi-\phi_k)\|_{L^2_{xy}}\lesssim h_{\phi}(k)$ and by interpolation we get for $0\leq q\leq s,$ 
\begin{equation}\label{decaybound}
\|J_x^q(\phi-\phi_k)\|_{L^{\infty}_{T}L^{2}_{xy}}\lesssim h_{\phi}(k)k^{q-s}.
\end{equation}

Let $u_k\in C([0,T]:H^{\infty})$ be solutions of \eqref{eqmkp5RT} with $u_k(0)=\phi_k.$ Denote $v_{k,k'}=u_{k}-u_{k'},$ for $1\leq k\leq k'.$  By Lemma \ref{energyestimateRT} and the definition of $P_k$, we have for $p\geq s$
\begin{equation} \label{growthbound}
    \|J_{x}^{p}u_k\|_{L^{\infty}_{T}L^2}\leq k^{p-s}C(\|\phi\|_{Z^s}).
\end{equation}
From \textit{Step 1}, we get that 
\begin{equation}\label{Step1}
    \|u_k\|_{L^{\infty}_{T}Z^s}\lesssim C(T, \|\phi\|_{Z^s}).
\end{equation}

We denote $g_{k}(T)=\|u_{k}\|_{L^2_TL^{\infty}_{xy}}+\|\partial_{x}u_{k}\|_{L^2_TL^{\infty}_{xy}}$ and $g_{k,k'}(T)=\|v_{k,k'}\|_{L^2_TL^{\infty}_{xy}}+\|\partial_{x}v_{k,k'}\|_{L^2_TL^{\infty}_{xy}}.$
By Lemma \ref{aprioriRT}, we have that 
\begin{equation}\label{gTbound}
g_{k}(T)\leq C(T,\|\phi\|_{Z^s}) \mbox{ and }g_{k,k'}(T)\leq C(T,\|\phi\|_{Z^s}).
\end{equation}
We look at the difference equation 
\begin{equation} \label{difference} 
\begin{split}
&\partial_t v_{k,k'}-\partial_{x}^{5}v_{k,k'}-\partial_{x}^{-1}\partial_{y}^{2}v_{k,k'}+\partial_{x}(v_{k,k'}[u_{k}^{2}+u_{k}u_{k'}+u_{k'}^{2}])=\\
&=\partial_t v_{k,k'}-\partial_x^5v_{k,k'}-\partial_x^{-1}\partial_y^2v_{k,k'}+v_{k,k'}^2\partial_x v_{k,k'}+3u_{k}^2\partial_x v_{k,k'} \\
&+3u_{k}v_{k,k'} \partial_x u_{k}-3u_{k}v_{k,k'} \partial_x v_{k,k'} -3v_{k,k'}^{2}\partial_x u_{k}=0. 
\end{split}
\end{equation} 
If we multiply \eqref{difference} by $v_{k,k'}$ and use \eqref{decaybound}, we get 
\begin{equation}\label{L2bound}
\|v_{k,k'}\|_{L^{\infty}_{T}L^2_{xy}}\leq C(\|\phi\|_{Z^s})k^{-s}h_{\phi}(k).
\end{equation}
As a consequence of \eqref{Step1}, \eqref{L2bound} and interpolation, we notice that for $r\leq s,$
\begin{equation}\label{subJsbound}
    \|J_{x}^{r}v_{k,k'}\|_{L^{\infty}_{T}L^2_{xy}}\lesssim C(\|\phi\|_{Z^s})k^{r-s}h_{\phi}(k)^{1-\frac{r}{s}}.
\end{equation}

Again, we apply $J_x^s$ to \eqref{difference} and then we multiply by $J_x^sv_{k,k'}.$ 
By applying multiple times Lemma \ref{katoponce}, we obtain 
 \begin{equation*}
 \begin{split}
  \frac{d}{dt}\|J^s_xv_{k,k'}\|^2_{L^{2}_{xy}}&\lesssim \|J^s_x v_{k,k'}\|^2_{L^2_{xy}}\big(\|u_k\|_{L^{\infty}_{xy}}+\|\partial_{x}u_k\|_{L^{\infty}_{xy}}+\|u_{k'}\|_{L^{\infty}_{xy}}+\|\partial_{x}u_{k'}\|_{L^{\infty}_{xy}}\big)\\&+\|J^s_xv_{k,k'}\|_{L^{2}_{xy}}\|J^{s+1}_x u_{k}\|_{L^{2}_{xy}}(\|v_{k,k'}\|_{L^{\infty}_{xy}}^2+\|v_{k,k'}\|_{L^{\infty}_{xy}}\|u_{k}\|_{L^{\infty}_{xy}})\\&+\|J^s_xv_{k,k'}\|_{L^{2}_{xy}}\|J^{s}_x u_{k}\|_{L^{2}_{xy}}(\|v_{k,k'}\|_{L^{\infty}_{xy}}\|u_k\|_{L^{\infty}_{xy}}+\|\partial_xv_{k,k'}\|_{L^{\infty}_{xy}}\|u_k\|_{L^{\infty}_{xy}})\\&+\|J^s_xv_{k,k'}\|_{L^{2}_{xy}}\|J^{s}_x u_{k}\|_{L^{2}_{xy}}(\|v_{k,k'}\|_{L^{\infty}_{xy}}\|\partial_x u_k\|_{L^{\infty}_{xy}}+\|v_{k,k'}\|_{L^{\infty}_{xy}}\|\partial_x v_{k,k'}\|_{L^{\infty}_{xy}}+ \|v_{k,k'}\|_{L^{\infty}_{xy}}^2)
  \end{split}
  \end{equation*}

    We are using the following variant of Gr{\"o}nwall's inequality: 
  \begin{lemma}\label{Gronwall}
  If $\alpha(t),\beta(t)$ are two non-negative functions, and $\frac{d}{dt}u(t)\leq u(t)\beta(t)+\alpha(t)$ for all $t\in [0,T]$ then $$u(t)\leq e^{\int_0^t\beta(s)ds}\Big(u(0)+\int_0^t\alpha(s)ds\Big).$$
  \end{lemma}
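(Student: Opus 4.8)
The plan is to run the standard integrating-factor argument for Gr\"onwall's inequality. First I would introduce the cumulative weight $B(t)=\int_0^t\beta(s)\,ds$, which is well defined and nondecreasing because $\beta\geq 0$, and set $w(t)=e^{-B(t)}u(t)$. The key observation is that, since $B'(t)=\beta(t)$, the hypothesis $u'(t)\leq\beta(t)u(t)+\alpha(t)$ translates into a clean one-sided bound on the derivative of $w$, namely
$$w'(t)=e^{-B(t)}\bigl(u'(t)-\beta(t)u(t)\bigr)\leq e^{-B(t)}\alpha(t).$$

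Next I would integrate this differential inequality over $[0,t]$. Using $B(0)=0$, so that $w(0)=u(0)$, this gives
$$e^{-B(t)}u(t)\leq u(0)+\int_0^t e^{-B(s)}\alpha(s)\,ds,$$
and multiplying through by the positive factor $e^{B(t)}$ yields the sharp weighted form
$$u(t)\leq e^{B(t)}u(0)+\int_0^t e^{B(t)-B(s)}\alpha(s)\,ds.$$

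Finally, to recover the stated (less sharp but more usable) inequality I would invoke both sign hypotheses. Since $\beta\geq 0$, the function $B$ is nondecreasing, so $B(t)-B(s)\leq B(t)$ for $0\leq s\leq t$ and hence $e^{B(t)-B(s)}\leq e^{B(t)}$; because $\alpha\geq 0$, this pointwise bound survives multiplication against $\alpha(s)$ under the integral sign. Factoring out $e^{B(t)}$ then produces exactly
$$u(t)\leq e^{B(t)}\Bigl(u(0)+\int_0^t\alpha(s)\,ds\Bigr),$$
which is the claim. The argument is entirely elementary, and I do not expect any genuine obstacle; the only points worth flagging are the minimal regularity needed to differentiate $w$ and to integrate (absolute continuity of $u$ together with local integrability of $\alpha,\beta$, which holds in all the energy-estimate applications of this lemma), and the fact that both nonnegativity conditions $\alpha,\beta\geq 0$ are genuinely used in the last step. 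Without them one still obtains the weighted form of the penultimate display, but the passage to the convenient factored form would fail.
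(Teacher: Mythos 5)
Your proof is correct: the integrating-factor argument with $B(t)=\int_0^t\beta(s)\,ds$ and $w(t)=e^{-B(t)}u(t)$ is the standard derivation of this Gr\"onwall variant, and your final relaxation $e^{B(t)-B(s)}\leq e^{B(t)}$ (valid since $\beta\geq 0$ gives $B(s)\geq 0$, and preserved under the integral since $\alpha\geq 0$) correctly yields the factored form stated in the lemma. The paper itself states this lemma without proof, invoking it as a well-known fact, so your argument is precisely the canonical one it implicitly relies on; there is nothing to reconcile.
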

Therefore, it yields 
  \begin{equation}\label{Jsdifference}
 \begin{split}
 \|J^s_x v_{k,k'}\|_{L^{\infty}_TL^2_{xy}}& \lesssim \exp(\frac{1}{2}g_{k,k'}(T)^2+\frac{1}{2}g_{k}(T)^2)\Big[\|J^s_x v_{k,k'}(0)\|_{L^{2}_{xy}}
 \\&+\|J^{s+1}_x u_{k}\|_{L^{\infty}_TL^{2}_{xy}}\|v_{k,k'}\|_{L^2_TL^{\infty}_{xy}}(\|v_{k,k'}\|_{L^2_TL^{\infty}_{xy}}+\|u_{k}\|_{L^2_TL^{\infty}_{xy}})
 \\&+\|J^{s}_x u_{k}\|_{L^{\infty}_TL^{2}_{xy}}(g_{k,k'}(T)^2+g_{k}(T)^2)\Big].
 \end{split} 
 \end{equation}

 We apply the linear estimate of Proposition \ref{linearestimate5RT} to the difference equation \eqref{difference}, then for $\epsilon<s-\frac{31}{20}$ we obtain 
\begin{equation} \label{imporveddecayrate}
\begin{split} 
 \|v_{k,k'}\|_{L^2_TL^{\infty}_{xy}}&\lesssim_{\epsilon}  \|J_x^{\frac{11}{20}+\epsilon}v_{k,k'}\|_{L^{\infty}_TL^{2}_{xy}}+\|J_x^{\frac{1}{10}-2\epsilon}J_y^{\frac{3}{20}+\epsilon}v_{k,k'}\|_{L^{\infty}_TL^{2}_{xy}}+\|v_{k,k'}(u_k^2+u_ku_{k'}+u_{k'}^2)\|_{L^{1}_TL^{2}_{xy}}\\
 &\lesssim h_{\phi}(k)k^{(-1)-}+(\|J_x^{\frac{5}{17}+\epsilon'}v_{k,k'}\|_{L^{\infty}_{T}L^{2}_{xy}})^{\frac{17}{20}-\epsilon}(\|\partial^{-1}_{x}\partial_{y}v_{k,k'}\|_{L^{\infty}_TL^2_{xy}})^{\frac{3}{20}+\epsilon}\\
 &+(g_{k}(T)^2+g_{k'}(T)^2)h_{\phi}(k)k^{-s}\\
 &\lesssim h_{\phi}(k)k^{(-1)-}+(h_{\phi}(k)^{-\frac{1}{s}}k)^{(-1-\frac{1}{16})-}C(T,\|\phi\|_{Z^s})+h_{\phi}(k)k^{-s}\lesssim k^{(-1)-},
 \end{split} 
 \end{equation}
where we used interpolation, \eqref{gTbound}, \eqref{subJsbound} and the fact that $s>\frac{31}{20}+\epsilon$. Now plugging the decay rate \eqref{imporveddecayrate} to \eqref{Jsdifference} together with \eqref{growthbound}, \eqref{gTbound} and \eqref{decaybound}, we have 
$$\|J_{x}^{s}v_{k,k'}\|_{L^{\infty}_{T}L^{2}_{xy}}\leq h_{\phi}(k)+k^{0-},$$
 which proves continuity in time of  $\|J_s^xu(t)\|_{L^2_{xy}}.$

We proceed with continuity of $\|\partial_{x}^{-1}\partial_{y}u(t)\|_{L^2_{xy}}.$ First, we notice by applying $\partial_{y}$ to the equation \eqref{eqmkp5RT} corresponding to $u_k$ and then multiply with $\partial_{y}u_k$ that 
\begin{equation}\label{partialy}
\begin{split}
&\|\partial_{y}u_k\|_{L^2_{xy}}\leq C(T, \|\phi\|_{Z^s})\|\partial_{y}\phi_k\|_{L^2_{xy}}=C(T,\|\phi\|_{Z^s})(\sum_{n\in \mathbb{Z}}\int_{\mathbb{R}}n^2\phi_{k}(\xi,n)^2d\xi)^{\frac{1}{2}}\\
&\leq C(T,\|\phi\|_{Z^s})(\sum_{n\in \mathbb{Z}}\int_{\mathbb{R}}k^2\frac{n^2}{\xi^2}\phi_{k}(\xi,n)^2d\xi)^{\frac{1}{2}}\lesssim C(T,\|\phi\|_{Z^s})k\|\partial_{x}^{-1}\partial_{y}\phi_k\|_{L^2_{xy}}\\
&\leq C(T,\|\phi\|_{Z^s})k,
\end{split}
\end{equation}
 where in the first inequality we used the definition of $\phi_k.$

We rewrite the equation \eqref{difference} as 
$$\partial_t v_{k,k'}-\partial_{x}^{5}v_{k,k'}-\partial_{x}^{-1}\partial_{y}^{2}v_{k,k'}+\partial_{x}(v_{k,k'}[3u_{k}^{2}+3u_{k}v_{k,k'}+v_{k,k'}^{2}])=0$$
and we apply to it $\partial_{x}^{-1}\partial_{y}$ and then multiply it with $\partial_{x}^{-1}\partial_{y}v_{k,k'},$ by integration by parts we obtain 
\begin{equation}
\begin{split}
\frac{d}{dt}\|\partial_{x}^{-1}\partial_{y}v_{k,k'}\|_{L^2_{xy}}&\lesssim \|\partial^{-1}_{x}\partial_{y}v_{k,k'}\|_{L^2_{xy}}[g_{k}(T)^2+g_{k'}(T)^2)+g_{k,k'}(T)^2]\\&+\|\partial_{y}u_k\|_{L^2_{xy}}\|v_{k,k'}\|_{L^{\infty}_{xy}}(\|v_{k,k'}\|_{L^{\infty}_{xy}}+\|u_k\|_{L^{\infty}_{xy}}) 
\end{split}
\end{equation}
From the Gr\"{o}nwall's inequality of Lemma \ref{Gronwall}, we have
\begin{equation}
\begin{split}
\|\partial_{x}^{-1}\partial_{y}v_{k,k'}\|_{L^{\infty}_{T}L^2_{xy}}&\lesssim \exp(\frac{1}{2}g_{k}(T)^2+\frac{1}{2}g_{k'}(T)^2)(\|\partial_{x}^{-1}\partial_{y}v_{k,k'}(0)\|_{L^2_{xy}}\\
&+\|\partial_{y}u_k\|_{L^{\infty}_{T}L^2_{xy}}\|v_{k,k'}\|_{L^2_{T}L^{\infty}_{xy}}(\|v_{k,k'}\|_{L^{2}_{T}L^{\infty}_{xy}}+\|u_k\|_{L^{2}_{T}L^{\infty}_{xy}})\\
&\lesssim C(T,\|\phi\|_{Z^s})(\|\partial_{x}^{-1}\partial_{y}v_{k,k'}(0)\|_{L^2_{xy}}+k^{0-}),
\end{split}
\end{equation}
where in the last inequality we used \eqref{gTbound} and \eqref{partialy}. Since 
$\|\partial_{x}^{-1}\partial_{y}(\phi-\phi_k)\|_{L^2_{xy}}\rightarrow 0 \mbox{ as }k\rightarrow \infty,$
we get that 
$\|\partial_{x}^{-1}\partial_{y}v_{k,k'}\|_{L^{\infty}_{T}L^2_{xy}}\rightarrow 0 \mbox{ as }k\rightarrow\infty.$ Therefore, the solution $u$ belongs to $C([0,T];Z^s).$
 
 From the anisotropic Sobolev inequality from Proposition 3 of \cite{MolinetSautTzvetkov} (also see \cite{Besov}) and \eqref{Step1}, we obtain that
 $$\|v_{k,k'}\|_{L^4_{xy}}^{4}\lesssim \|v_{k,k'}\|_{L^2_{xy}}^{3-\frac{2}{s}}\|J_{x}^{s}v_{k,k'}\|_{L^2_{xy}}^{\frac{2}{s}}\|\partial_{x}^{-1}\partial_{y}v_{k,k'}\|_{L^2_{xy}}\lesssim C(\|\phi\|_{Z^s})k^{2-3s},$$
which proves continuity in time of $\|u(t)\|_{L^4_{xy}}.$ We conclude that 
$$E_{(s)}(u(t))=E_{(s)}(\phi).$$

\textbf{Step 3.}
We conclude by proving continuity of the flow map. 
We assume that $T \in [0,\infty)$ and $\phi^l \rightarrow \phi$ in $Z^{s}(\mathbb{R}\times \mathbb{T})$ as $l\rightarrow \infty.$ We are going to prove that $u^l \rightarrow u$ in $C([0,T]: Z^{s}(\mathbb{R}\times \mathbb{T}))$ as $l \rightarrow \infty$, where $u^l$ and $u$ are solutions of the the initial value problem \eqref{eqmkp5RT} corresponding to initial data $\phi^l$ and $\phi.$ We also denote for a function $f$ the quantity $g_f(T)=\|f\|_{L^2_TL^{\infty}_{xy}}+\|\partial_{x}f\|_{L^2_TL^{\infty}_{xy}}.$

For $k\geq 1,$ let as before, $\phi^l_k=P_k\phi^l$ and $u^l_k \in C([0,T]:H^{\infty})$ the corresponding solutions. Denote by $\omega_k=u_k-u$. By the same estimates as in \eqref{imporveddecayrate} applied to $\omega_k$ and $\partial_x\omega_k$ we get 
$$\|u_k-u\|_{Z^{s}}\lesssim \mbox{exp}(\frac{1}{2}g_{\omega_k}(T)^2+\frac{1}{2}g_{u_k}(T)^2)(\|\phi_k-\phi\|_{Z^{s}}+C(T,\|\phi_k\|_{Z^{s}},\|\phi\|_{Z^{s}})k^{0-}).$$
Following the same reasoning, we have that 
$$\|u_k^l-u^l\|_{Z^{s}}\lesssim \mbox{exp}(\frac{1}{2}g_{\omega_k^l}(T)^2+\frac{1}{2}g_{u_k^l}(T)^2)(\|\phi_k^l-\phi^l\|_{Z^{s}}+C(T,\|\phi_k^l\|_{Z^{s}},\|\phi^l\|_{Z^{s}})k^{0-}).$$
Now, denote $\omega_k^l=u_k^l-u_k.$ By the same estimates from  applied to $\omega_k^l,$ then 
$$\|u_k^l-u_k\|_{Z^{s}}\lesssim \mbox{exp}(\frac{1}{2}g_{\omega_k^l}(T)^2+\frac{1}{2}g_{u_k^l}(T)^2)(\|\phi_k^l-\phi_k\|_{H^{s,s}}+C(T,\|\phi_k^l\|_{Z^{s}},\|\phi_k\|_{Z^{s}})k^{0-}).$$

The boundedness of $g_{u_k}(T), g_{u_k^l}(T), g_{\omega_k}(T)$ and $g_{\omega_k^l}(T)$ as in \eqref{gTbound} concludes that 
\begin{equation*}
\begin{split}
\|u^l-u\|_{Z^{s}}&\leq \|u_k-u\|_{Z^{s}}+\|u_k^l-u_k\|_{Z^{s}}+\|u^l_k-u^l\|_{Z^{s}}\\& \lesssim \|\phi_k-\phi\|_{Z^{s}}+\|\phi_k^l-\phi_k\|_{Z^{s}}+\|\phi_k^l-\phi^l\|_{Z^{s}}\\&+C(T,\|\phi\|_{Z^{s}},\|\phi_k\|_{Z^{s}},\|\phi^l\|_{Z^{s}},\|\phi_k^l\|_{Z^{s}})k^{0-} 
\end{split}
\end{equation*}
which, by letting $k \rightarrow \infty$, we get $\|u^l-u\|_{Z^{s}}\lesssim \|\phi^l-\phi\|_{Z^{s}}$ and proves the continuity of the flow map. 

    \begin{corollary}
    Let $u_0\in Z^2$ with $\|u_0\|_{Z^2}$ small enough. Then there exists a global in time solution $u$ of \eqref{eqmkp5RT} with initial data $u_0$ with $u\in C(\mathbb{R};Z^2).$
    \end{corollary}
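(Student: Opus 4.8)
The plan is to combine the local theory of Theorem \ref{LWP5RT} at regularity $s=2$ (legitimate since $2>\frac{31}{20}$) with the two conserved quantities $M$ and $E_{(2)}$ to produce a uniform-in-time a priori bound on $\|u(t)\|_{Z^2}$, and then to iterate the local existence statement with a fixed time-step. The starting observation is that the $Z^2$-norm is equivalent to the natural energy-space norm,
$$\|u\|_{Z^2}^2\approx \|u\|_{L^2}^2+\|D_x^2u\|_{L^2}^2+\|\partial_x^{-1}\partial_y u\|_{L^2}^2,$$
so it suffices to control these three pieces. By conservation of mass, $\|u(t)\|_{L^2}=\|u_0\|_{L^2}$ for all $t$, and the two higher-order pieces satisfy, via conservation of $E_{(2)}$,
$$\|D_x^2u(t)\|_{L^2}^2+\|\partial_x^{-1}\partial_y u(t)\|_{L^2}^2=2E_{(2)}(u_0)+\|u(t)\|_{L^4}^4.$$

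The key step is to absorb the quartic term using the anisotropic Sobolev inequality on the cylinder (Proposition 3 of \cite{MolinetSautTzvetkov}, already invoked in Step 2 of the proof of Theorem \ref{LWP5RT}), which for $s=2$ gives
$$\|u\|_{L^4}^4\lesssim \|u\|_{L^2}^2\,\|D_x^2u\|_{L^2}\,\|\partial_x^{-1}\partial_y u\|_{L^2}\leq C\|u_0\|_{L^2}^2\big(\|D_x^2u\|_{L^2}^2+\|\partial_x^{-1}\partial_y u\|_{L^2}^2\big),$$
using AM--GM and mass conservation in the last step. Plugging this into the energy identity, if $\|u_0\|_{L^2}$ is small enough that $C\|u_0\|_{L^2}^2\leq \frac12$, the nonlinear term can be moved to the left-hand side, yielding
$$\|D_x^2u(t)\|_{L^2}^2+\|\partial_x^{-1}\partial_y u(t)\|_{L^2}^2\leq 4E_{(2)}(u_0)$$
uniformly in $t$. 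Hence $\|u(t)\|_{Z^2}^2\lesssim \|u_0\|_{L^2}^2+E_{(2)}(u_0)$, a bound that is independent of $t$ and is itself small when $\|u_0\|_{Z^2}$ is small (since $E_{(2)}(u_0)\lesssim \|u_0\|_{Z^2}^2$).

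With this a priori bound in hand, the global extension is routine: the local existence time $T=T(\|u_0\|_{Z^2})$ in Theorem \ref{LWP5RT} depends only on the $Z^2$-norm of the data, and the bound above keeps $\|u(nT)\|_{Z^2}$ below the smallness threshold for every $n$, so I would restart the solution at $t=T,2T,\dots$ with a uniform step and glue the pieces to obtain a solution on $[0,\infty)$; the time-reversal symmetry of \eqref{eqmkp5RT} together with uniqueness then extends it to $(-\infty,0]$, giving $u\in C(\mathbb{R};Z^2)$. The main obstacle is the coercivity estimate: verifying that the anisotropic Sobolev inequality on $\mathbb{R}\times\mathbb{T}$ takes exactly the form needed and that the smallness of $\|u_0\|_{L^2}$ (equivalently, of $\|u_0\|_{Z^2}$) genuinely suffices to absorb the quartic term. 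A secondary point to check is the equivalence between the $Z^2$-norm and the energy-space norm, in particular the behavior of the weight $\tfrac{|n|}{|\xi|}$ (or $\tfrac{n^2}{\xi^2}$) near $\xi=0$; once these are settled, the iteration argument presents no further difficulty.
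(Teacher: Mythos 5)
Your proposal is correct and follows essentially the same route as the paper, whose (very terse) proof likewise combines the local well-posedness of Theorem \ref{LWP5RT} with the anisotropic Sobolev inequality \eqref{anisotropic} transferred to the partially periodic setting; you have simply written out the details the paper leaves implicit, namely using conservation of $M$ and $E_{(2)}$, absorbing the quartic term by smallness of $\|u_0\|_{L^2}$, and iterating the local theory with a uniform time step. Your closing caveats (the exact form of the Molinet--Saut--Tzvetkov inequality on $\mathbb{R}\times\mathbb{T}$ and the equivalence of the $Z^2$-weight with the energy norm near $\xi=0$) are exactly the ``slight modifications'' the paper alludes to, so no gap remains.
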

    \begin{proof}
        We obtain the result by Theorem \ref{LWP5RT} and using \eqref{anisotropic} for the partially periodic setting (the proof is almost the same as in the case $\mathbb{R}\times\mathbb{R} $ with slight modifications).
    \end{proof}

\bibliographystyle{alpha} 
\bibliography{KP}

\begin{thebibliography}{GPW08}

\bibitem[AS85]{AbramyanStepanyats}
L.A. Abramyan and Y.~A. Stepanyants.
\newblock The structure of two-dimensional solitons in media with anomalously small dispersion.
\newblock {\em Sov. Phys. JETP.}, 61(5):963--966, 1985.

\bibitem[BIN78]{Besov}
Oleg~V. Besov, Valentin~P. Il'in, and Sergey~M. Nikol'ski\u{i}.
\newblock {\em Integral representations of functions and imbedding theorems. {V}ol. {I}}.
\newblock Scripta Series in Mathematics. V. H. Winston \& Sons, Washington, DC; Halsted Press [John Wiley \& Sons], New York-Toronto-London, 1978.
\newblock Translated from the Russian.

\bibitem[BS75]{BonaSmith}
J.~L. Bona and R.~Smith.
\newblock The initial-value problem for the {K}orteweg-de {V}ries equation.
\newblock {\em Philos. Trans. Roy. Soc. London Ser. A}, 278(1287):555--601, 1975.

\bibitem[CFL09]{ChenFengLiu}
Jianqing Chen, Baofeng Feng, and Yue Liu.
\newblock On the uniform bound of solutions for the {KP}-type equations.
\newblock {\em Nonlinear Anal.}, 71(12):e2062--e2069, 2009.

\bibitem[dBS97a]{deBouardSaut1}
Anne de~Bouard and Jean-Claude Saut.
\newblock Solitary waves of generalized {K}adomtsev-{P}etviashvili equations.
\newblock {\em Ann. Inst. H. Poincar\'e{} C Anal. Non Lin\'eaire}, 14(2):211--236, 1997.

\bibitem[dBS97b]{deBouardSaut2}
Anne de~Bouard and Jean-Claude Saut.
\newblock Symmetries and decay of the generalized {K}adomtsev-{P}etviashvili solitary waves.
\newblock {\em SIAM J. Math. Anal.}, 28(5):1064--1085, 1997.

\bibitem[EL22]{EsfahaniLevandovsky}
Amin Esfahani and Steve Levandosky.
\newblock Instability and blow-up of solutions of the fifth-order {KP} equation.
\newblock {\em J. Math. Anal. Appl.}, 509(2):Paper No. 125953, 28, 2022.

\bibitem[Esf11]{Esfahani}
Amin Esfahani.
\newblock Instability of solitary waves of the generalized higher-order {KP} equation.
\newblock {\em Nonlinearity}, 24(3):833--846, 2011.

\bibitem[FT85]{FalkovitchTuritsyn}
G.E. Falkovitch and S.K. Turitsyn.
\newblock Stability of magnetoelastic solitons an focusing of sound in antiferromagnet.
\newblock {\em Soviet Phys. JETP}, 62(5):146--152, 1985.

\bibitem[GPW08]{GuoPengWang}
Zihua Guo, Lizhong Peng, and Baoxiang Wang.
\newblock Decay estimates for a class of wave equations.
\newblock {\em J. Funct. Anal.}, 254(6):1642--1660, 2008.

\bibitem[IK07]{KenigIonescu}
A.~D. Ionescu and C.~E. Kenig.
\newblock Local and global wellposedness of periodic {KP}-{I} equations.
\newblock In {\em Mathematical aspects of nonlinear dispersive equations}, volume 163 of {\em Ann. of Math. Stud.}, pages 181--211. Princeton Univ. Press, Princeton, NJ, 2007.

\bibitem[IN98]{IorioNunes}
Rafael~Jos\'e{} I\'orio, Jr. and Wagner Vieira~Leite Nunes.
\newblock On equations of {KP}-type.
\newblock {\em Proc. Roy. Soc. Edinburgh Sect. A}, 128(4):725--743, 1998.

\bibitem[Ken04]{Kenig}
C.~E. Kenig.
\newblock On the local and global well-posedness theory for the {KP}-{I} equation.
\newblock {\em Ann. Inst. H. Poincar\'e{} C Anal. Non Lin\'eaire}, 21(6):827--838, 2004.

\bibitem[KP70]{KadomtsevPetviashvili}
B.B. Kadomtsev and V.I. Petviashvili.
\newblock On the stability of solitary waves in weakly disper- sive media.
\newblock {\em Soviet. Phys. Dokl.}, 15(5):539--541, 1970.

\bibitem[KP88]{KatoPonce}
Tosio Kato and Gustavo Ponce.
\newblock Commutator estimates and the {E}uler and {N}avier-{S}tokes equations.
\newblock {\em Comm. Pure Appl. Math.}, 41(7):891--907, 1988.

\bibitem[KPV93]{KenigPonceVega}
Carlos~E. Kenig, Gustavo Ponce, and Luis Vega.
\newblock Well-posedness and scattering results for the generalized {K}orteweg-de {V}ries equation via the contraction principle.
\newblock {\em Comm. Pure Appl. Math.}, 46(4):527--620, 1993.

\bibitem[KS21]{KleinSaut}
Christian Klein and Jean-Claude Saut.
\newblock {\em Nonlinear dispersive equations---inverse scattering and {PDE} methods}, volume 209 of {\em Applied Mathematical Sciences}.
\newblock Springer, Cham, [2021] \copyright 2021.

\bibitem[KZ05a]{KenigZiesler}
C.~E. Kenig and S.~N. Ziesler.
\newblock Local well posedness for modified {K}adomstev-{P}etviashvili equations.
\newblock {\em Differential Integral Equations}, 18(10):1111--1146, 2005.

\bibitem[KZ05b]{KenigZiesler2}
C.~E. Kenig and S.~N. Ziesler.
\newblock Maximal function estimates with applications to a modified {K}adomstev-{P}etviashvili equation.
\newblock {\em Commun. Pure Appl. Anal.}, 4(1):45--91, 2005.

\bibitem[Liu01]{Liu1}
Yue Liu.
\newblock Blow up and instability of solitary-wave solutions to a generalized {K}adomtsev-{P}etviashvili equation.
\newblock {\em Trans. Amer. Math. Soc.}, 353(1):191--208, 2001.

\bibitem[Liu02]{Liu2}
Yue Liu.
\newblock Strong instability of solitary-wave solutions to a {K}adomtsev-{P}etviashvili equation in three dimensions.
\newblock {\em J. Differential Equations}, 180(1):153--170, 2002.

\bibitem[LP09]{LinaresPastor}
Felipe Linares and Ademir Pastor.
\newblock Well-posedness for the two-dimensional modified {Z}akharov-{K}uznetsov equation.
\newblock {\em SIAM J. Math. Anal.}, 41(4):1323--1339, 2009.

\bibitem[LX08]{LiXiao}
Junfeng Li and Jie Xiao.
\newblock Well-posedness of the fifth order {K}adomtsev-{P}etviashvili {I} equation in anisotropic {S}obolev spaces with nonnegative indices.
\newblock {\em J. Math. Pures Appl. (9)}, 90(4):338--352, 2008.

\bibitem[MST04]{MolinetSautTzvetkov}
L.~Molinet, J.~C. Saut, and N.~Tzvetkov.
\newblock Correction: ``{G}lobal well-posedness for the {KP}-{I} equation'' [{M}ath. {A}nn. {\bf 324} (2002), no. 2, 255--275; mr1933858].
\newblock {\em Math. Ann.}, 328(4):707--710, 2004.

\bibitem[Sau93]{Saut}
Jean-Claude Saut.
\newblock Remarks on the generalized {K}adomtsev-{P}etviashvili equations.
\newblock {\em Indiana Univ. Math. J.}, 42(3):1011--1026, 1993.

\bibitem[ST00]{SautTzvetkov}
J.~C. Saut and N.~Tzvetkov.
\newblock The {C}auchy problem for the fifth order {KP} equations.
\newblock {\em J. Math. Pures Appl. (9)}, 79(4):307--338, 2000.

\bibitem[Ste93]{Stein}
Elias~M. Stein.
\newblock {\em Harmonic analysis: real-variable methods, orthogonality, and oscillatory integrals}, volume~43 of {\em Princeton Mathematical Series}.
\newblock Princeton University Press, Princeton, NJ, 1993.
\newblock With the assistance of Timothy S. Murphy, Monographs in Harmonic Analysis, III.

\bibitem[Wei85]{Weinstein}
Michael~I. Weinstein.
\newblock Modulational stability of ground states of nonlinear {S}chr\"odinger equations.
\newblock {\em SIAM J. Math. Anal.}, 16(3):472--491, 1985.

\end{thebibliography}
\end{document}